\newtheorem{thm}{Theorem}[section]
\newtheorem{lem}[thm]{Lemma}
\newtheorem{prop}[thm]{Proposition}
\newtheorem{defn}[thm]{Definition}
\newtheorem{rem}[thm]{Remark}
\newcommand{\lesi}{\lesssim}
\newcommand{\dx}{d\mu(x)}
\newcommand{\dy}{d\mu(y)}
\newcommand{\supp}{\operatorname{supp}}
\newcommand{\f}{\frac}
\newcommand{\Om}{\Omega}
\newcommand{\su}{\subset}
\newcommand{\vc}{\infty}
\newcommand{\Rn}{\mathbb{R}^n}
\newcommand{\rad}{\rm rad}
\title[Weighted Hardy space estimates on Green operators ]{Regularity estimates for Green operators of Dirichlet and Neumann problems on weighted Hardy spaces}         % Enter your title between curly braces
\author{The Anh Bui}
\address{Department of Mathematics, Macquarie University, NSW 2109,
Australia}
\email{the.bui@mq.edu.au, bt\_anh80@yahoo.com}
\author{Xuan Thinh Duong}
\address{Department of Mathematics, Macquarie University, NSW 2109,
Australia}
\email{xuan.duong@mq.edu.au}
\keywords{Dirichlet problem, Neumann problem, weighted Hardy space on domains}
\subjclass[2010]{Primary: 35J25, 42B35; Secondary: 35J08, 42B30, 42B37.}
\begin{document}

\begin{abstract}
In this paper we first study the generalized weighted Hardy spaces $H^p_{L,w}(X)$ for $0<p\le 1$ associated to nonnegative self-adjoint operators $L$ satisfying Gaussian upper bounds on the space of homogeneous type $X$ in both cases of finite and infinite measure. We show that the weighted Hardy spaces defined via  maximal functions and atomic decompositions coincide. Then we  prove weighted regularity estimates for the Green operators of the inhomogeneous Dirichlet and Neumann problems in suitable bounded or unbounded domains including  bounded semiconvex domains,  convex regions above a Lipschitz graph and upper half-spaces. Our estimates are in terms of weighted $L^p$ spaces for the range $1<p<\vc$ and in terms of the new weighted Hardy spaces for the range $0<p\le 1$. Our regularity estimates for the Green operators under the weak smoothness assumptions on the boundaries of the domains are new, especially the estimates on Hardy spaces for the full range $0<p\le 1$ and the case of unbounded domains.
\end{abstract}
\date{}

\maketitle

\tableofcontents

\section{Introduction}\label{sec: intro}
Let $\Om$ be an open connected domain in $\Rn$. Denote by $W^{1,2}(\Om)$ the  Sobolev space on $\Om$ with the norm 
\[
\|f\|_{W^{1,2}(\Om)}=\|f\|_{L^2(\Om)} + \|\nabla f\|_{L^2(\Om)}.
\]
The closure of $C^\vc_c(\Om)$ in $W^{1,2}(\Om)$  will be denoted by $W^{1,2}_0(\Om)$. 

Consider the inhomogeneous Dirichlet problem for the Laplacian
\begin{equation}\label{DirichletProblem}
\left\{\begin{array}{ccccc}
\Delta u &=& f &\text{in}& \Om\\
u &=& 0&\text{on}& \partial \Om.
\end{array}\right.
\end{equation}
Denote by $\mathbb{G}_D$ the Green operator for Dirichlet problem (\ref{DirichletProblem}), i.e. the solution operator which maps each $f\in C^\vc(\overline \Om)$ to the unique solution $u:=\mathbb{G}_D(f)\in W^{1,2}_0(\Om)$ of the problem \eqref{DirichletProblem}.

We also consider the inhomogeneous Neumann problem for the Laplacian
\begin{equation}\label{NeumannProblem}
\left\{\begin{array}{ccccc}
\Delta u &=& f &\text{in}& \Om\\
\partial_\nu u &=& 0&\text{on}& \partial \Om
\end{array}\right.
\end{equation} 
for a suitable domain $\Omega$. Denote by $\mathbb{G}_N$ the Green operator for Neumann problem \eqref{NeumannProblem}, i.e. the solution operator which maps each $f\in C^\vc(\overline \Om)$ with $\int_\Om f =0$ to the unique solution $u:=\mathbb{G}_N(f)\in W^{1,2}(\Om)$ of the problem \eqref{NeumannProblem}.

One of the most interesting and important problems concerning problems \eqref{DirichletProblem} and \eqref{NeumannProblem} is the regularity estimate for the Green operators $\mathbb{G}_D$ and $\mathbb{G}_N$. We would like to give a shortlist of known results in this research direction for the $L^p$-boundedness with $1<p<\vc$ (see for example \cite{DHMMY}).
\begin{enumerate}[{\rm (i)}]
	\item The $L^p$-boundedness for $\nabla^2\mathbb{G}_D$ and $\nabla^2 \mathbb{G}_N$ with $1<p<\vc$  on a bounded $C^\vc$ domain was obtained in \cite{ADN} and \cite{LM}. 
	\item In  \cite{Ka}, it was proved that $\nabla^2\mathbb{G}_D$ is well-defined and bounded on $L^2(\Om)$ provided that $\Om$ is a bounded convex domain.
	\item Under the assumption that $\Om$ is a bounded and convex domain, the weak type (1,1) for  $\nabla^2\mathbb{G}_D$ was proved in \cite{DVW, F}, meanwhile  the boundedness for  $\nabla^2\mathbb{G}_D$ on the suitable Hardy space was obtained in \cite{A2}.
	\item The $L^2$-boundedness for  $\nabla^2\mathbb{G}_N$ appeared first in \cite{GI}. Then it was proved that it is bounded from some Hardy space into $L^1(\Om)$, hence by interpolation it is bounded on $L^p(\Om)$ for $1<p<2$. See for example \cite{AJ}.
	\item It is important to note that the $L^p$--boundedness for $\nabla^2\mathbb{G}_D$ and $\nabla^2 \mathbb{G}_N$ may fail in the class of Lipschitz domains for any $p\in (1,\vc)$ and in the class of convex domains for any $p\in (2,\vc)$. For the further details, see \cite{A2,AJ, D, JK, MP} and the references therein.
\end{enumerate}
The following brief summary gives an overview of the progress concerning the boundedness for $\nabla^2\mathbb{G}_D$ and $\nabla^2 \mathbb{G}_N$ for $0<p\le 1$.
\begin{enumerate}[{\rm (i)}]
	\item In  \cite{CDS, CKS1, CKS2} the authors studied the theory of Hardy spaces on domains. In 
	\cite{CDS}, they obtained the boundedness of  $\nabla^2\mathbb{G}_D$ and $\nabla^2 \mathbb{G}_N$  on these Hardy spaces with the range $0<p\le 1$  when the domains are  bounded $C^\vc$ domains. The boundedness on the Hardy spaces with the range $\f{n}{n+1}<p\le 1$ for $\nabla^2\mathbb{G}_D$ and $\nabla^2 \mathbb{G}_N$ on either bounded Lipschitz domains or the upper half-spaces was proved in  \cite{CKS1,CKS2} (see also \cite{MM}).
	\item In the case when $\Om$ is a bounded Lipschitz domain satisfying a uniform exterior ball condition,  the estimates of $\nabla^2\mathbb{G}_D$ on Besov and Triebel--Lizorkin spaces were proved in  \cite{MMY}. These results include the boundedness of $\nabla^2\mathbb{G}_D$ on local Hardy spaces for $\f{n}{n+1}<p\le 1$.
	\item Recently, the authors in \cite{DHMMY} developed the theory of Hardy spaces associated to Dirichlet Laplacians on bounded semiconvex domains and Neumann Laplacians on bounded convex domains. Then they gave a new approach to obtain the boundedness of  $\nabla^2\mathbb{G}_D$  and $\nabla^2\mathbb{G}_N$ on local Hardy spaces for $\f{n}{n+1}<p\le 1$. These results were extended to weighted Orlicz-Hardy spaces in \cite{CCYY}.
\end{enumerate}
Although the regularity  estimates for $\nabla^2\mathbb{G}_D$  and $\nabla^2\mathbb{G}_N$ have been investigated intensively, there are still a number of interesting open problems. 
 
\noindent \textbf{Problem 1:} The Hardy space estimates for $\nabla^2\mathbb{G}_D$  and $\nabla^2\mathbb{G}_N$ for a full range $0<p\le 1$ are only known when $\Om$ is a bounded $C^\vc$ domain. Under weaker smoothness assumptions such as Lipschitz domains, the range $\f{n}{n+1}<p\le 1$ is known but  the range $0<p\le \f{n}{n+1}$ is still open.

\noindent \textbf{Problem 2:} There are a number of results for the Hardy space estimates for $\nabla^2\mathbb{G}_D$  and $\nabla^2\mathbb{G}_N$ on bounded domains, while to the best of our knowledge similar results on unbounded domains are still open. See \cite{CKS2} for the boundedness of $\nabla^2\mathbb{G}_D$  and $\nabla^2\mathbb{G}_N$ on Hardy spaces with $\f{n}{n+1}<p\le 1$ on the upper half-spaces. See \cite{A1, AJ, GI} for the boundedness of $\nabla^2\mathbb{G}_D$  and $\nabla^2\mathbb{G}_N$ from the Hardy spaces $(p=1)$ to $L^1$ on certain unbounded Lipschitz domains. In the case of general unbounded domains, the Hardy space estimates for both  $\nabla^2\mathbb{G}_D$  and $\nabla^2\mathbb{G}_N$  are still unknown even for $p=1$.

\noindent \textbf{Problem 3:} Concerning the weighted estimates, recently in \cite{CCYY} the authors introduced the local weighted Orlicz-Hardy spaces in a bounded semiconvex/convex domain and they obtained  the boundedness of  $\nabla^2\mathbb{G}_D$  and $\nabla^2\mathbb{G}_N$  on these spaces with a limited range of $p$. However, it seems that the class of weights in  \cite{CCYY} is not optimal. It is natural to raise the question on finding better class of weights  and weighted $L^p$ estimates for $1<p<\vc$ not only for bounded domains but also for unbounded domains. 

The  aim of this paper is to address Problems 1, 2 and 3 for different types of domains. Our main results are for bounded domains in Theorem
\ref{mainthm1-bounded domain} and for unbounded domains in Theorem \ref{mainthm2-above convex Lipschitz domains}. In the specific case of upper half spaces,  in addition to estimates from Theorem \ref{mainthm2-above convex Lipschitz domains}, we give further results in 
Theorem \ref{mainthm3-upper half-space}. Our approach in this paper might be applicable to other problems in different settings since we state our assumptions on certain heat kernel estimates; see the theorems in Sections 3.1 and 3.2.

In order to state the main results precisely, we first give definitions of  weighted Hardy spaces.
For the weighted Hardy spaces on $\Rn$, we recall the definitions in \cite{ST}. Assume that $p\in (0,1]$, the weight $w$ belongs to the Muckenhoupt class $A_\vc(\Rn)$ (see Section 2 for the Muckenhoupt weights)  and $q\in (q_w,\vc]$ where $q_w$ is defined in (\ref{qw-defn}). A bounded, measurable function $a$ is called a $(p,q,w)$-atom if 
\begin{enumerate}[\upshape (i)]
	\item $a$ is supported in a ball $B\subset \mathbb{R}^n$;
	\item $\|a\|_{L^q_w}\le w(B)^{1/q-1/p}$;
	\item $\displaystyle \int_{B} x^\alpha a(x)dx=0$ 
	for all multi--indices $\alpha$ with $|\alpha|\le \lfloor n(q_w/p-1)\rfloor$.
\end{enumerate}
The Hardy space $H^{p,q}_{w}(\Om)$ is defined as the set of all distributions $f\in \mathscr{S}'$ such that
\[
f=\sum_{j=1}^{\infty}\lambda_j a_j
\]
where $a_j$ are $(p,q,w)$-atoms and $\lambda_j$ are scalars with $\sum_{j=1}^{\infty}|\lambda_j|^p<\vc$.
We also set
\[
\|f\|^p_{H^{p,q}_{w}(\Rn)}=\inf\Big\{\sum_{j=1}^{\infty}|\lambda_j|^p: f=\sum_{j=1}^{\infty}\lambda_j a_j \Big\}
\]
where the infimum is taken over all such decompositions.

It is well known that for $p\in (0,1]$, $w\in A_\vc(\Rn)$ and $q\in (q_w,\vc]$ we have
\[
H^{p,q}_{w}(\Rn)\equiv H^{p,\vc}_{w}(\Rn).
\]
Hence, for any $p\in (0,1]$ and $w\in A_\vc(\Rn)$  we define $H^{p}_{w}(\Rn)$ as any space $H^{p,q}_{w}(\Rn)$ with $q\in (q_w,\vc]$.

We next recall the weighted Hardy spaces on domains of Miyachi \cite{Mi2}.
\begin{defn}  Let $\Om$ is an open set in $\Rn$. Let $p\in (0,1]$, $w\in A_\vc(\Rn)$ and $q\in (q_w,\vc]$. A bounded, measurable function $a$ from $\Om$ to $\mathbb{R}$ is called a $(p,q,w)_{Mi}$-atom if 
	\begin{enumerate}[\upshape (i)]
		\item $a$ is supported in a ball $B\subset \Om$;
		\item $\|a\|_{L^q_w(\Om)}\le w(B)^{1/q-1/p}$;
		\item either $2B\subset \Om$ and  $4B\cap \partial\Om\ne \emptyset$, or $4B\subset \Om$ and 
		$$\displaystyle \int_B x^\alpha a(x)dx=0$$ 
		for all multi--indices $\alpha$ with $|\alpha|\le \lfloor n(q_w/p-1)\rfloor$.
	\end{enumerate}
	The Hardy space $H^{p,q}_{Mi,w}(\Om)$ is defined as the set of all $f\in \mathscr{S}'$ such that
	\[
	f=\sum_{j=1}^{\infty} \lambda_j a_j
	\]
	where $a_j$ are $(p,q,w)_{Mi}$-atoms and $\lambda_j$ are scalars with $\sum_{j=1}^{\infty} |\lambda_j|^p<\vc$.
	We also set
	\[
	\|f\|^p_{H^{p,q}_{Mi,w}(\Om)}=\inf\Big\{\sum_{j=1}^{\infty}|\lambda_j|^p: f=\sum_{j=1}^{\infty}\lambda_j a_j \Big\}
	\]
	where the infimum is taken over all such decompositions.
\end{defn}
Let $\phi \in C^\vc_c(B(0,1))$  be a non--negative radial function such that $\int \phi(x)dx=1$. It was proved in \cite{Mi} that the Hardy spaces $H^{p,q}_{Mi,w}(\Om)$ can be characterized in terms of maximal functions of the form
\begin{equation}\label{eq-phi}
f_\Om^+(x)=\max_{0<t<\delta(x)/2}|\phi_t\ast f(x)|
\end{equation}
where $\delta(x)=d(x,\Om^c)$ and $\phi_t(x)=t^{-n}\phi(x/t)$. More precisely, we have the following theorem from \cite{Mi2}:
\begin{thm}\label{MiyachiTheorem}
	Let $p\in (0,1]$, $w\in A_\vc(\Rn)$ and $q\in (q_w,\vc]$. Then we have $f\in H^{p,q}_{Mi,w}(\Om)$ if and only if $f_\Om^+\in L^p_w(\Om)$; moreover,
	\[
	\|f\|_{H^{p,q}_{Mi,w}(\Om)}\sim \|f_\Om^+\|_{L^p_w(\Om)}.
	\]
\end{thm}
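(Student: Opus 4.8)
The plan is to prove the two inclusions separately, following the classical Fefferman--Stein/Latter template for identifying a maximal-function space with an atomic space, modified in the two places where the domain enters: the truncation $0<t<\delta(x)/2$ in the definition of $f_\Om^+$, and the dichotomy in the definition of a $(p,q,w)_{Mi}$-atom between \emph{interior} atoms (those with $4B\subset\Om$, carrying vanishing moments) and \emph{boundary} atoms (those with $2B\subset\Om$ and $4B\cap\partial\Om\neq\emptyset$, carrying none). Throughout I would use the standard properties of $A_\infty$ weights (doubling, and the growth bound on $w(2^kB)/w(B)$ governed by $q_w$) together with the boundedness of the Hardy--Littlewood maximal operator $M$ on $L^q_w$ for $q>q_w$.

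For the inclusion $H^{p,q}_{Mi,w}(\Om)\subset\{f: f_\Om^+\in L^p_w(\Om)\}$, the sublinearity of $f\mapsto f_\Om^+$ together with $p\le 1$ reduces matters, via $(f_\Om^+)^p\le\sum_j|\lambda_j|^p ((a_j)_\Om^+)^p$, to the uniform estimate $\|a_\Om^+\|_{L^p_w(\Om)}^p\lesssim 1$ for a single atom $a$ supported in $B=B(x_B,r_B)$. For a boundary atom I would first observe that $\phi_t\ast a(x)$ can be nonzero only if $B(x,t)\cap B\neq\emptyset$, while the truncation forces $t<\delta(x)/2\le|x-z|/2$ for any boundary point $z\in 4B$; a one-line computation then confines $\supp a_\Om^+$ to a fixed dilate $CB$. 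On $CB$ one estimates by H\"older with exponent $q/p$, bounds $a_\Om^+\lesssim Ma$ pointwise, and uses the $L^q_w$-boundedness of $M$, the size bound (ii), and doubling to obtain the constant $1$. For an interior atom one argues as on $\Rn$: the same H\"older estimate handles a dilate $CB$, while on each annulus $2^{k+1}B\setminus 2^kB$ one subtracts the degree-$\lfloor n(q_w/p-1)\rfloor$ Taylor polynomial of $y\mapsto\phi_t(x-y)$ at $x_B$ and uses the vanishing moments and the smoothness of $\phi$ to gain kernel decay; the order of the moments is chosen exactly so that the resulting series, weighted by the growth of $w(2^kB)$, converges.

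For the reverse inclusion I would run a Calder\'on--Zygmund/grand-maximal decomposition adapted to $\Om$. Set $O_k=\{x\in\Om: f_\Om^+(x)>2^k\}$, which is open by lower semicontinuity, take a Whitney decomposition $\{Q_{k,j}\}$ of each $O_k$ with a subordinate smooth partition of unity $\{\eta_{k,j}\}$, and form the good/bad splitting $f=\sum_{k,j}b_{k,j}$ with $b_{k,j}=(f-c_{k,j})\eta_{k,j}$ in the usual telescoping fashion. Each Whitney piece is then classified by comparing $\ell(Q_{k,j})$ with the distance of $Q_{k,j}$ to $\partial\Om$: when the cube is comparable to that distance, the truncation in $f_\Om^+$ only sees scales up to $\ell(Q_{k,j})$, and the normalized piece is a multiple of a \emph{boundary} atom, for which no cancellation is imposed; when the cube lies deep inside $\Om$, the full range of scales acts and one subtracts a polynomial of degree $\le\lfloor n(q_w/p-1)\rfloor$ to produce an \emph{interior} atom. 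The heights obey $|b_{k,j}|\lesssim 2^k$ on the support, so that $a_{k,j}=\lambda_{k,j}^{-1}b_{k,j}$ satisfy (ii) with $\lambda_{k,j}\sim 2^k w(Q_{k,j})^{1/p}$, whence $\sum_{k,j}|\lambda_{k,j}|^p\lesssim\sum_k 2^{kp}w(O_k)\lesssim\|f_\Om^+\|_{L^p_w(\Om)}^p$ by the bounded overlap of the Whitney cubes and the layer-cake formula.

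The main obstacle is this reverse inclusion, and specifically the bookkeeping at $\partial\Om$. One must verify that the Whitney cubes close to the boundary genuinely yield atoms of the admissible boundary type---that a suitable dilate of the supporting ball lies in $\Om$ while a slightly larger one meets $\partial\Om$---and that the truncation $t<\delta(x)/2$ is precisely strong enough that interior pieces retain the cancellation needed to carry $\lfloor n(q_w/p-1)\rfloor$ vanishing moments while boundary pieces need none. Coupled with this is the need to control the correcting polynomials and the heights $c_{k,j}$ uniformly by $2^k$, to prove convergence of $\sum_{k,j}\lambda_{k,j}a_{k,j}$ to $f$ in $\mathscr{S}'$, and to make the weighted normalizations $w(Q_{k,j})^{1/p}$ interact correctly with the $A_\infty$ doubling and the index $q_w$; it is in combining the domain geometry with the weight that the argument is most delicate.
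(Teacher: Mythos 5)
First, a point of comparison: the paper does not prove Theorem \ref{MiyachiTheorem} at all; it is imported from Miyachi \cite{Mi2} (with the unweighted case in \cite{Mi}), so there is no in-paper argument to measure your proposal against. Judged on its own terms, your forward direction (atomic $\Rightarrow$ maximal) is essentially complete and correct: the computation confining $\supp a_\Om^+$ to a fixed dilate of $B$ for a boundary atom (playing $\delta(x)\le |x-z_0|\le |x-x_B|+4r_B$ for $z_0\in 4B\cap\partial\Om$ against the truncation $t<\delta(x)/2$) is exactly the mechanism the paper itself exploits in Case 2 of the proof of Theorem \ref{thm1-Dirichlet}, and the moment order $\lfloor n(q_w/p-1)\rfloor$ is matched correctly against the growth $w(2^kB)\lesssim 2^{knr}w(B)$ for $r$ close to $q_w$.

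The genuine gap is in the reverse direction, and it is not merely the ``bookkeeping at $\partial\Om$'' you identify as the main obstacle. Your Calder\'on--Zygmund/Whitney scheme needs the height bound $|b_{k,j}|\lesssim 2^k$ and, for interior cubes, a bound of the same order on the correcting polynomials $c_{k,j}$; both are obtained by pairing $f$ against the bump functions $\eta_{k,j}$ and their polynomial duals, and such pairings are controlled by a \emph{grand} maximal function $\sup_{\varphi\in\mathcal F}\sup_{t}|\varphi_t\ast f|$ over a normalized family $\mathcal F$ of test functions (or a tangential, Peetre-type maximal function), not by the radial maximal function $f_\Om^+$ built from the single fixed $\phi$. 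The inequality $\|f^{**}_\Om\|_{L^p_w(\Om)}\lesssim\|f_\Om^+\|_{L^p_w(\Om)}$, adapted to the truncation $t<\delta(x)/2$, is the actual analytic core of Miyachi's theorem and is entirely absent from your outline; it is the precise analogue of what this paper must establish for semigroup maximal functions in Propositions \ref{prop1} and \ref{prop2} before it can run its own atomic decomposition in Theorem \ref{maximal function result}. Until that step is supplied --- together with the convergence of $\sum_{k,j}\lambda_{k,j}a_{k,j}$ to $f$ in $\mathscr{S}'$, which you flag but do not prove --- the reverse inclusion remains a plan rather than a proof.
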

From Theorem \ref{MiyachiTheorem} for $w\in A_\vc(\Rn)$ and $p\in (0,1]$ we will write the Hardy spaces $H^p_{Mi,w}(\Om)$ for any space $H^{p,q}_{Mi,w}(\Om)$ with  $q\in (q_w,\vc]$.

The Hardy space $H^p_{Mi,w}(\Om)$ is closely related to the Hardy space $H^p_{r,w}(\Om)$ defined by
\[
H^p_{r,w}(\Om)=\{f\in \mathscr{S}': \ \text{there exists $F\in H^p_w(\Rn)$ so that $F|_{\Om}=f$}\}
\]
with the norm
\[
\|f\|_{H^p_{r,w}(\Om)}=\inf\{\|F\|_{H^p_{w}(\Rn)}: F\in H^p_w(\Rn), F|_{\Om}=f\}.
\]
Arguing similarly to \cite{CKS2} we can prove that  if $\Om$ is a bounded Lipschitz domain or  a convex domain above  a Lipschitz graph, then $H^p_{Mi,w}(\Om)\equiv H^p_{w, r}(\Om)$ for $w\in A_\vc(\Rn)$ and $\f{nq_w}{n+1}<p\le 1$. 
%See \eqref{qw-defn} for the definition of $q_w$.

%Hardy spaces $H_{w}^p(\Rn)$

Let $p\in (0,1]$, $w\in A_\vc(\Rn)$ and $q\in (q_w,\vc]$. A bounded, measurable function $a$ is called a local $(p,q,w)$-atom if 
\begin{enumerate}[\upshape (i)]
	\item $a$ is supported in a ball $B\subset \mathbb{R}^n$;
	\item $\|a\|_{L^q_w}\le w(B)^{1/q-1/p}$;
	\item $\displaystyle \int_B x^\alpha a(x)dx=0$ 
	for all multi--indices $\alpha$ with $|\alpha|\le \lfloor n(q_w/p-1)\rfloor$, if $r_B<1$.
\end{enumerate}
Similarly to the weighted Hardy space $H^p_w(\Rn)$ we can define the weighted local Hardy spaces via local atomic decompositions for  $p\in (0,1]$ and $w\in A_\vc(\Rn)$, and we denote these local weighted Hardy spaces by $h^p_w(\Rn)$. See for example \cite{H-QBui}.

\begin{defn}
	Let $\Om$ be an open set in $\Rn$. Let $p\in (0,1]$ and $w\in A_\vc(\Rn)$. The weighted  Hardy space $H^p_{z,w}(\Om)$ can be defined as follows:
	\[
	H^p_{z,w}(\Om):=\left\{\begin{array}{cc}
	\{f \in H^p_{w}(\Rn): f\equiv 0 \quad \text{on $\Om^c$}\}, & \text{if $\Om$ is unbounded}\\
	&\\
	\{f \in h^p_{w}(\Rn): f\equiv 0 \quad \text{on $\Om^c$}\}, & \text{if $\Om$ is bounded},
	\end{array}
	\right.
	\]
	with the norm defined by
	\[
	\|f\|_{H^p_{z,w}(\Om)}:=\left\{\begin{array}{cc}
	\|f\|_{H^p_{w}(\Om)}, & \text{if $\Om$ is unbounded}\\
	&\\
	\|f\|_{h^p_{w}(\Om)}, & \text{if $\Om$ is bounded},
	\end{array}
	\right.
	\]
\end{defn}

We remark that if $\Om$ is bounded domain and $w\equiv 1$, the Hardy spaces $H^{p}_{z,w}(\Om)$ coincides with the local  Hardy spaces of extension $h^{p}_{z}(\Om)$ defined in \cite{CKS2}. We also note  that in the case of bounded domain, the local Hardy spaces $h^p_z(\Om)$ defined in \cite{CKS2} and the Hardy spaces $H^p_{CW}(\Om)$ defined by Coifmann and Weiss in \cite{CW} are the equivalent. For this reason we use the same notation $H^p_{z,w}(\Om)$  for both cases of bounded and unbounded domains.
\begin{defn}
	Let $p\in (0,1]$, $w\in A_\vc(\Rn)$ and $q\in (q_w,\vc]$. A bounded, measurable function $a: \Om\to \mathbb{R}$ is called an $(p,q,w)_\Om$-atom if 
	\begin{enumerate}[\upshape (i)]
		\item $a$ is supported in a ball $B\subset \mathbb{R}^n$ and $a\equiv0$ on $\Om^c$;
		\item $\|a\|_{L^q_w(\Om)}\le w(B)^{1/q-1/p}$;
		\item $\displaystyle \int_B x^\alpha a(x)dx=0$ 
		for all multi--indices $\alpha$ with $|\alpha|\le \lfloor n(q_w/p-1)\rfloor$.
	\end{enumerate}
	In the case where $\Om$ is bounded, a function $a$ can be viewed as an atom if 
	\[
	\|a\|_{L^q_w(\Om)}\le w(\Om)^{1/q-1/p}.
	\] 
\end{defn}

The Hardy space $H^{p,q}_{at,w}(\Om)$ is defined as the set of all distributions $f\in \mathscr{S}'$ such that
\[
f=\sum_{j=1}^{\infty}\lambda_j a_j
\]
where $a_j$ are $(p,q,w)$-atoms and $\lambda_j$ are scalars with $\sum_{j}|\lambda_j|^p<\vc$.
We also set
\[
\|f\|^p_{H^{p,q}_{at,w}(\Om)}=\inf\Big\{\sum_{j=1}^{\infty} |\lambda_j|^p: f=\sum_{j=1}^{\infty} \lambda_j a_j \Big\}
\]
where the infimum is taken over all such decompositions.

It is easy to see that for $p\in (0,1]$, $w\in A_\vc(\Rn)$ and $q\in (q_w,\vc]$ we have
\[
H^{p,q}_{at,w}(\Om)\equiv H^{p}_{z,w}(\Om)
\]
for either  $\Om$ is bounded or $\Om$ is unbounded.

In what follows, denote by $\Delta_D$ and $\Delta_N$ the Dirichlet Laplacian and the Neumann Laplacian, respectively. For $w\in A_\vc(\Rn)$ and $0<p\le 1$, we denote by $H^p_{\Delta_D, w}(\Om)$ and $H^p_{\Delta_N, w}(\Om)$ the weighted Hardy spaces associated to $\Delta_D$ and $\Delta_N$, respectively. See the definitions of these function spaces in Section 2. Our first main result concerning the weighted estimates for $\nabla^2 \mathbb{G}_D$ and $\nabla^2 \mathbb{G}_N$ on bounded domains.
\begin{thm}
	\label{mainthm1-bounded domain} Assume that  $\Om\subset \Rn$ is a bounded, simply connected, semiconvex domain (see \cite[Definition 4.6]{DHMMY}) for the Dirichlet problem and $\Om$ is a bounded convex set for the Neumann problem.  Then we have
\begin{enumerate}[{\rm (i)}]
	\item The operators $\nabla^2 \mathbb{G}_D$ and $\nabla^2 \mathbb{G}_N$ extend as bounded operators  on $L^p_w(\Om)$ for all $1<p<2$ and $w\in A_p(\Rn)\cap RH_{(2/p)'}(\Rn)$, and as bounded operators from $L^1_w(\Om)$ into $L^{1,\vc}_w(\Om)$ for $w\in A_1(\Rn)\cap RH_{2}(\Rn)$.\\
	
	\item The operator $\nabla^2 \mathbb{G}_D$ extends as a bounded operator from $H^p_{\Delta_D,w}(\Om)$ into $H^p_{Mi,w}(\Om)$ for all $0<p\le 1$ and $w\in \bigcup_{1<r<2}A_r(\Rn)\cap RH_{(2/r)'}(\Rn)$. As a consequence,  the operator $\nabla^2 \mathbb{G}_D$ extends as a bounded operator on $H^p_{r,w}(\Om)$ for all $\f{n}{n+1}<p\le 1$ and $w\in \bigcup_{1<r<r_0}A_r(\Rn)\cap RH_{(2/r)'}(\Rn)$ where $r_0=\f{np}{n+1}$.\\
	
	\item The operator $\nabla^2 \mathbb{G}_N$ extends as a bounded operator from $H^p_{\Delta_N,w}(\Om)$ into $H^p_{Mi,w}(\Om)$ for all $0<p\le 1$ and $w\in \bigcup_{1<r<2}A_r(\Rn)\cap RH_{(2/r)'}(\Rn)$. As a consequence,  the operator $\nabla^2 \mathbb{G}_D$ extends as a bounded operator from $H^p_{z,w}(\Om)$ into $H^p_{r,w}(\Om)$ for all $\f{n}{n+1}<p\le 1$ and $w\in \bigcup_{1<r<r_0}A_r(\Rn)\cap RH_{(2/r)'}(\Rn)$ where $r_0=\f{np}{n+1}$.
\end{enumerate}	
\end{thm}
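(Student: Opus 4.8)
The starting point is the identity $\mathbb{G}_D=\Delta_D^{-1}$, so that with $L=-\Delta_D$ (a nonnegative self-adjoint operator whose heat semigroup $e^{-tL}$ enjoys Gaussian upper bounds on the admissible domains) we have $\mathbb{G}_D=-L^{-1}$ and the object to study is the second-order Riesz-type operator $\nabla^2 L^{-1}$; the Neumann case is identical with $L=-\Delta_N$ acting on the mean-zero subspace, where the spectral gap on a bounded domain makes $L^{-1}=\int_0^\vc e^{-tL}\,dt$ converge. The plan is to treat (i) by weighted singular-integral theory for operators that are $L^2$-bounded and satisfy $L^2$ off-diagonal (Davies--Gaffney) estimates, and to treat (ii)--(iii) by passing to the molecular characterization of the operator-adapted spaces $H^p_{\Delta_D,w}(\Om)$, $H^p_{\Delta_N,w}(\Om)$ and showing that $\nabla^2\mathbb{G}_D$ sends molecules to sums of Miyachi atoms.

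For part (i), first I would record that $\nabla^2 L^{-1}$ is bounded on $L^2(\Om)$, which is exactly the content of \cite{Ka} (Dirichlet, semiconvex) and \cite{GI} (Neumann, convex). Combining this $L^2$ second-order estimate with the Gaussian/Davies--Gaffney bounds for $e^{-tL}$ and $\sqrt t\,\nabla e^{-tL}$ and using $L^{-1}=\int_0^\vc e^{-tL}\,dt$, I would derive $L^2$--$L^2$ off-diagonal estimates for the regularized operators $\nabla^2 L^{-1}(I-e^{-r^2L})^M$ and $\nabla^2 L^{-1}e^{-r^2L}$, with sufficiently fast decay in the distance between input and output balls. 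With $L^2$ boundedness and these off-diagonal bounds in hand, the weighted estimate on $L^p_w(\Om)$ for $1<p<2$ and $w\in A_p(\Rn)\cap RH_{(2/p)'}(\Rn)$ follows from the weighted extension machinery for such operators; the reverse-Hölder exponent $(2/p)'$ appears precisely because the available off-diagonal estimates are $L^2$-based rather than pointwise. The weak type $(1,\vc)$ endpoint for $w\in A_1(\Rn)\cap RH_2(\Rn)$ is the $p\to 1$ limit of the same scheme, via a weighted Calderón--Zygmund decomposition whose good part is controlled in $L^2_w$ and whose bad part is absorbed by the off-diagonal decay of $\nabla^2 L^{-1}e^{-r^2L}$.

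For parts (ii) and (iii), I would use the molecular decomposition of $H^p_{\Delta_D,w}(\Om)$ from the framework of Sections 3.1 and 3.2, so that it suffices to estimate $\nabla^2\mathbb{G}_D a$ for a single molecule $a$, and then invoke the maximal-function characterization of $H^p_{Mi,w}(\Om)$ in Theorem \ref{MiyachiTheorem}. A molecule adapted to $L$ has the form $a=L^M b$ concentrated (up to rapidly decaying tails) in a ball $B=B(x_B,r_B)$, whence $\nabla^2\mathbb{G}_D a=-\nabla^2 L^{M-1}b$; I would then bound the Miyachi maximal function $(\nabla^2\mathbb{G}_D a)^+_\Om$ by splitting $\Om$ into the local region $2B$, the far annuli $2^{k+1}B\setminus 2^kB$, and estimating each piece through the off-diagonal decay from part (i). The decisive point is the boundary dichotomy built into the Miyachi atoms: when $B$ is close to $\partial\Om$ (so $4B\cap\partial\Om\neq\emptyset$) the Dirichlet condition supplies the required cancellation and no moment condition is needed, while for $B$ well inside $\Om$ one verifies the vanishing moments of the output directly from the structure of $\nabla^2 L^{-1}$. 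Summing the resulting atomic estimates in $L^p_w$ yields $\|\nabla^2\mathbb{G}_D a\|_{H^p_{Mi,w}(\Om)}\lesi 1$, and the stated consequences on $H^p_{r,w}(\Om)$ for $\f{n}{n+1}<p\le 1$ follow from the identification $H^p_{Mi,w}(\Om)\equiv H^p_{r,w}(\Om)$ recorded above, together with the continuous inclusion of the classical (extension, resp. zero-extension) Hardy spaces into the operator-adapted spaces.

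I expect the main obstacle to be the boundary analysis in (ii)--(iii): matching the output of the second-order Riesz transform to the Miyachi atom structure using only $L^2$ off-diagonal estimates instead of pointwise kernel bounds, and doing so uniformly down to arbitrarily small $p$. The novelty over the range $\f{n}{n+1}<p\le 1$ is exactly that the operator-adapted space $H^p_{\Delta_D,w}$ encodes cancellation through the high power $M$ of the semigroup rather than through polynomial moments, so that a classical extension need not exist yet $(\nabla^2\mathbb{G}_D a)^+_\Om$ still lies in $L^p_w(\Om)$; quantifying this for $0<p\le\f{n}{n+1}$, where $M$ and the heat-kernel regularity must be pushed hardest, is the crux of the argument.
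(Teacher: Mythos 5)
Your overall architecture for part (i) is close to the paper's, but for parts (ii)--(iii) there are two genuine gaps. First, you start from ``the molecular decomposition of $H^p_{\Delta_D,w}(\Om)$'', but no such decomposition is available here: molecular decompositions of operator-adapted Hardy spaces are normally obtained from the atomic decomposition of tent spaces, and the paper explicitly points out that this route may fail when $\Om$ is bounded (finite measure). The whole of Section 2 (Theorem \ref{maximal function result}) is devoted to circumventing this by producing, via the maximal function characterization and a Whitney-type covering, an \emph{atomic} decomposition with atoms $a=L^Mb$ satisfying $\supp L^kb\subset B$ exactly. This compact support is what the paper's proof actually exploits in Theorem \ref{thm1-Dirichlet}(ii): for $4B\subset\Om$ one has $\nabla^2\Delta_D^{-1}a=\nabla^2\Delta_D^{M-1}b$ supported in $B$ on the nose, the moments are killed by repeated integration by parts against a cutoff, and for $4B\cap\Om^c\ne\emptyset$ the truncation $t<\delta(x)/2$ in the Miyachi maximal function forces $(\nabla^2\Delta_D^{-1}a)^+_\Om$ to vanish off $8B$ --- no annular summation and no far-field decay of $\nabla^2\Delta_D^{M-1}$ is needed. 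With molecules the tails would require exactly such decay, which you have not established and which is not available from the hypotheses.

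Second, in part (i) you assert that $L^2$ off-diagonal bounds for $\nabla^2\Delta_D^{-1}(I-e^{-r_B^2\Delta_D})^m$ follow from Gaussian/Davies--Gaffney bounds for $e^{-tL}$ and $\sqrt t\,\nabla e^{-tL}$ together with the global $L^2$ bound on $\nabla^2\Delta_D^{-1}$. They do not: localizing a \emph{second} derivative requires a second-order Caccioppoli-type inequality, i.e.\ the weighted estimate \eqref{L2 gradient estimate} on $\nabla_x^2p_{t,\Delta_D}$, and this is precisely where the semiconvexity (Dirichlet) and convexity (Neumann) of $\Om$ enter, via \cite[Proposition 4.15]{DHMMY}; the conclusion is false for general Lipschitz domains, so some such geometric input is unavoidable and should be named. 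Finally, you should make explicit how the ``consequence'' statements for $\f{n}{n+1}<p\le1$ are obtained: the paper proves the reverse embedding $H^p_{Mi,w}(\Om)\hookrightarrow H^p_{\Delta_D,w}(\Om)$ (Theorem \ref{thm2-Dirichlet}) by testing the heat semigroup maximal function on Miyachi atoms using the H\"older continuity \eqref{Holder-Dirichlet}, and then again invokes Theorem \ref{maximal function result}; your appeal to ``the continuous inclusion of the classical Hardy spaces into the operator-adapted spaces'' is exactly the nontrivial step, not a known fact to be quoted.
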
 
\begin{rem} (a) The main parts of Theorem \ref{mainthm1-bounded domain} are (ii) and (iii).
	
	(b) The $L^p$--weighted estimates in part (i) was obtained in \cite{BD}  for $w\in A_1(\Om)\cap RH_{2}(\Om)$. Here we prove the results (i) in terms of $w\in A_1(\Rn)\cap RH_{2}(\Rn)$. While the class $A_1(\Rn)$ is smaller than $A_1(\Om)$, it is not clear about the two classes $RH_{2}(\Rn)$ and  $RH_{2}(\Om)$. We note that the our proof in this paper can be used to reproduce
	the result for $w\in A_1(\Om)\cap RH_{2}(\Om)$ as well.

 (c) The boundedness of $\nabla^2 \mathbb{G}_D$ and $\nabla^2 \mathbb{G}_N$ from $H^p_{\Delta_D,w}(\Om)$ into $H^p_{Mi,w}(\Om)$ and from $H^p_{\Delta_N,w}(\Om)$ into $H^p_{Mi,w}(\Om)$ for $0<p\le 1$ are new even for $w\equiv 1$. This answers the open question in \cite{HLMMY} for the case $0<p\le \f{n}{n+1}$ under weak smoothness condition on the boundary of the domain. 
 
 (d) The weighted estimates for $\f{n}{n+1}<p\le 1$  was obtained in \cite[Theorems 1.8--1.9]{CCYY} for the local weighted Orlicz-Hardy spaces. However, our estimates are sharper than those in \cite[Theorems 1.8--1.9]{CCYY} since the class of weights $w\in \bigcup_{1<r<r_0}A_r(\Rn)\cap RH_{(2/r)'}(\Rn)$ where $r_0=\f{np}{n+1}$ is strictly larger than those in \cite[Theorems 1.8--1.9]{CCYY} which is $w\in A_q(\Rn)\cap RH_r(\Rn)$ where  
 \[
 q<\f{np}{n+1}, \ r>\f{2}{2-q}\  {\rm and} \  \f{2q}{p}<\f{n+1}{n}+\f{r-1}{pr}.
 \]
 \end{rem}
Our next main result is the following.
\begin{thm}
	\label{mainthm2-above convex Lipschitz domains} Let $\Om\subset \Rn$ be a convex domain above a Lipschitz graph.  Then we have
	\begin{enumerate}[{\rm (i)}]
		\item The operators $\nabla^2 \mathbb{G}_D$ and $\nabla^2 \mathbb{G}_N$ extend as bounded operators  on $L^p_w(\Om)$ for all $1<p<2$ and $w\in A_p(\Rn)(\Rn)\cap RH_{(2/p)'}(\Rn)$, and as bounded operators from $L^1_w(\Om)$ into $L^{1,\vc}_w(\Om)$ for $w\in A_1(\Rn)\cap RH_{2}(\Rn)$.\\
		
		\item The operator $\nabla^2 \mathbb{G}_D$ extends as a bounded operator from $H^p_{\Delta_D,w}(\Om)$ into $H^p_{Mi,w}(\Om)$ for all $0<p\le 1$ and $w\in \bigcup_{1<r<2}A_r(\Rn)\cap RH_{(2/r)'}(\Rn)$. As a consequence,  the operator $\nabla^2 \mathbb{G}_D$ extends as a bounded operator on $H^p_{r,w}(\Om)$ for all $\f{n}{n+1}<p\le 1$ and $w\in \bigcup_{1<r<r_0}A_r(\Rn)\cap RH_{(2/r)'}(\Rn)$  where $r_0=\f{np}{n+1}$.\\
		
		\item The operator $\nabla^2 \mathbb{G}_N$ extends as a bounded operator from $H^p_{\Delta_N,w}(\Om)$ into $H^p_{Mi,w}(\Om)$ for all $0<p\le 1$ and $w\in \bigcup_{1<r<2}A_r(\Rn)\cap RH_{(2/r)'}(\Rn)$. As a consequence,  the operator $\nabla^2 \mathbb{G}_D$ extends as a bounded operator from $H^p_{z,w}(\Om)$ into $H^p_{r,w}(\Om)$ for all $\f{n}{n+1}<p\le 1$ and $w\in \bigcup_{1<r<r_0}A_r(\Rn)\cap RH_{(2/r)'}(\Rn)$  where $r_0=\f{np}{n+1}$.
	\end{enumerate}	
\end{thm}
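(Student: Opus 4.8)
The plan is to deduce all three parts from the abstract criteria of Sections 3.1 and 3.2, so that the entire argument reduces to verifying the relevant heat kernel hypotheses for a convex domain $\Om$ lying above a Lipschitz graph. Writing $L$ for $-\Delta_D$ (resp. $-\Delta_N$), one has $\mathbb{G}_D=L^{-1}$ and $\nabla^2\mathbb{G}_D=\nabla^2 L^{-1}$, which I would represent through the subordination formula $\nabla^2 L^{-1}=\int_0^\infty \nabla^2 e^{-tL}\,dt$ together with its regularized analogues built from $t\nabla^2 L e^{-tL}$. The key structural input, furnished by convexity, is the $L^2(\Om)$-boundedness of $\nabla^2 L^{-1}$ (the second-order $L^2$ estimate valid on convex domains) together with Davies--Gaffney, i.e.\ $L^2$--$L^2$ off-diagonal, bounds for the families $\{\nabla^2 e^{-tL}\}$ and $\{t\nabla^2 L e^{-tL}\}$; these are exactly the assumptions under which the abstract theorems operate.

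For part (i), I would feed these off-diagonal bounds into the weighted extrapolation machinery for operators possessing only $L^2$ off-diagonal estimates rather than pointwise Calder\'on--Zygmund kernel bounds. Since $\nabla^2 L^{-1}$ is bounded on $L^2$ but its kernel is controlled only in the $L^2$-averaged sense, the natural range produced by downward extrapolation from $p=2$ is precisely $w\in A_p(\Rn)\cap RH_{(2/p)'}(\Rn)$ for $1<p<2$, with the endpoint $A_1\cap RH_2$ yielding the weak-type $(1,1)$ estimate into $L^{1,\vc}_w(\Om)$. The point to check here is that the off-diagonal constants are uniform in scale and position, which in the unbounded setting requires the global Gaussian upper bound for the heat kernel of $L$ on a convex region above a Lipschitz graph.

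For parts (ii) and (iii), I would start from the atomic/molecular decomposition of the $L$-adapted Hardy spaces $H^p_{\Delta_D,w}(\Om)$ and $H^p_{\Delta_N,w}(\Om)$, and show that $\nabla^2\mathbb{G}$ sends each adapted atom to a fixed multiple of a $(p,q,w)_{Mi}$-atom (or a rapidly convergent sum of such), after which Theorem \ref{MiyachiTheorem} and its maximal function characterization give membership in $H^p_{Mi,w}(\Om)$. The two-type structure of Miyachi atoms is exactly adapted to the domain: for atoms whose enlargement stays inside $\Om$ one recovers, from the cancellation built into the $L$-adapted atom and the off-diagonal decay of $\nabla^2 e^{-tL}$, an interior Miyachi atom carrying the usual vanishing moments; for atoms whose support sits near $\partial\Om$ one produces a boundary-type Miyachi atom, for which no moment condition is demanded. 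The stated consequences then follow by identifying $H^p_{Mi,w}(\Om)\equiv H^p_{r,w}(\Om)$ on a convex domain above a Lipschitz graph (as recorded earlier in the excerpt) and by matching the $L$-adapted Hardy space on the domain side with $H^p_{r,w}(\Om)$ or $H^p_{z,w}(\Om)$ as appropriate; this forces the weight restriction $w\in\bigcup_{1<r<r_0}A_r(\Rn)\cap RH_{(2/r)'}(\Rn)$ with $r_0=\f{np}{n+1}$, chosen so that $q_w$ is small enough for the admissible moment orders to coincide in the range $\f{n}{n+1}<p\le 1$.

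The main obstacle, and the genuinely new feature relative to the bounded case of Theorem \ref{mainthm1-bounded domain}, is establishing the required heat kernel estimates \emph{uniformly} on the unbounded domain $\Om$. Concretely, one must show that the Dirichlet (resp.\ Neumann) heat semigroup on a convex region above a Lipschitz graph enjoys a global Gaussian upper bound and that the second-order objects $t\nabla^2 L e^{-tL}$ satisfy Davies--Gaffney estimates with constants independent of scale and of the position along the graph. For the Dirichlet problem this rests on comparison of the heat kernel with the free Gaussian afforded by convexity and the maximum principle, while the control of $\nabla^2 u$ near $\partial\Om$ again uses convexity to prevent boundary blow-up; the Lipschitz graph structure enters only in the local-to-global passage and in the identification of the Hardy spaces. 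Once these kernel bounds are secured, parts (i)--(iii) follow from the abstract machinery exactly as in the bounded case.
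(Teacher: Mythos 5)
Your proposal follows essentially the same route as the paper: reduce everything to the abstract criteria of Sections 3.1--3.2 (Theorems \ref{thm1-Dirichlet}, \ref{thm2-Dirichlet}, \ref{thm1-Neumann}, \ref{thm2-Neumann}), take the $L^2$-boundedness of $\nabla^2\Delta_D^{-1}$ and $\nabla^2\Delta_N^{-1}$ on convex domains above Lipschitz graphs from Adolfsson and Adolfsson--Jerison, verify the heat kernel hypotheses uniformly on the unbounded domain, and for the Hardy space parts map operator-adapted atoms to the two types of Miyachi atoms and then pass to $H^p_{r,w}$ and $H^p_{z,w}$. Two points of imprecision are worth flagging. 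First, the hypothesis the paper actually verifies is \eqref{L2 gradient estimate}, a Gaussian-weighted $L^2_x$ bound on $\nabla^2_x p_{t,\Delta_D}(\cdot,y)$ for each fixed $y$ (an $L^1\to L^2$ off-diagonal estimate), obtained by applying the Caccioppoli-type inequality of Lemma \ref{lemAJ} to $u=e^{-t\Delta}f$ as in \cite[Proposition 4.16]{DHMMY}; plain $L^2$--$L^2$ Davies--Gaffney bounds, which is what you invoke, are strictly weaker as stated, though they upgrade to the needed form after composing with $e^{-tL/2}$ and its Gaussian bound, so this is repairable rather than fatal. Second, the ``consequence'' clauses of (ii) and (iii) do not come from matching moment orders: they come from the embeddings $H^p_{Mi,w}(\Om)\hookrightarrow H^p_{\Delta_D,w}(\Om)$ and $H^p_{z,w}(\Om)\hookrightarrow H^p_{\Delta_N,w}(\Om)$, whose proofs require the H\"older continuity \eqref{Holder-Dirichlet}, \eqref{Holder-Neumann} of the heat kernels in the $y$-variable (supplied by \cite{AR}); it is the resulting decay $|x-x_B|^{-(n+\gamma)}$ of $\mathcal M_{\Delta}a$ off $4B$ that dictates both the range $\f{n}{n+\gamma}<p\le 1$ and the admissible weights. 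You should name this ingredient explicitly, as it is the one hypothesis in your list that is otherwise unaccounted for.
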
  

\begin{rem} The $L^p$--boundedness  from a suitable Hardy space into $L^1$ for $\nabla^2 \mathbb{G}_D$ and $\nabla^2 \mathbb{G}_N$ for $1<p<2$  when $\Om\subset \Rn$ is a convex domain above a Lipschitz graph was obtained in \cite{A2, AJ}. The results in Theorem \ref{mainthm2-above convex Lipschitz domains} are new; moreover the results in (ii) and (iii) are new even for unweighted cases.
	\end{rem}
	In the case of half spaces, in addition to Theorem \ref{mainthm2-above convex Lipschitz domains}, we can have further estimates as follows.
\begin{thm}
	\label{mainthm3-upper half-space} Let $\Om= \Rn_+$ be the upper half-space. Then we have
	\begin{enumerate}[{\rm (i)}]
		\item The operators $\nabla^2 \mathbb{G}_D$ and $\nabla^2 \mathbb{G}_N$ extend as bounded operators  on $L^p_w(\Om)$ for all $1<p<\vc$ and $w\in A_p(\Rn)$, and as bounded operators from $L^1_w(\Om)$ into $L^{1,\vc}_w(\Om)$ for $w\in A_1$.\\
		
		\item The operator $\nabla^2 \mathbb{G}_D$ extends as a bounded operator from $H^p_{\Delta_D,w}(\Om)$ into $H^p_{Mi,w}(\Om)$ for all $0<p\le 1$ and $w\in A_\vc(\Rn)$. As a consequence,  the operator $\nabla^2 \mathbb{G}_D$ extends as a bounded operator on $H^p_{r,w}(\Om)$ for all $\f{n}{n+1}<p\le 1$ and $w\in \bigcup_{1<r<r_0}A_r(\Rn)\cap RH_{(2/r)'}(\Rn)$  where $r_0=\f{np}{n+1}$.\\
		
		\item The operator $\nabla^2 \mathbb{G}_D$ extends as a bounded operator from $H^p_{z,w}(\Om)$ into $H^p_{r,w}(\Om)$ for all $0<p\le 1$ and $w\in A_\vc(\Rn)$.\\
		
		\item The operator $\nabla^2 \mathbb{G}_N$ extends as a bounded operator from $H^p_{\Delta_N,w}(\Om)$ into $H^p_{Mi,w}(\Om)$ for all $0<p\le 1$ and $w\in A_\vc(\Rn)$. As a consequence,  the operator $\nabla^2 \mathbb{G}_D$ extends as a bounded operator from $H^p_{z,w}(\Om)$ into $H^p_{r,w}(\Om)$ for all $0<p\le 1$ and $w\in A_\vc(\Rn)$.
	\end{enumerate}	
\end{thm}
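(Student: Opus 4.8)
The plan is to exploit the special geometry of the half-space through the classical reflection principle, which converts both Green operators into restrictions of second-order Riesz transforms on the whole space $\Rn$, and then to invoke the sharp weighted theory of Calderón--Zygmund operators. On $\Rn_+$ the Dirichlet Green function is $\Gamma(x-y)-\Gamma(x-y^*)$ and the Neumann one is $\Gamma(x-y)+\Gamma(x-y^*)$, where $y^*=(y',-y_n)$ and $\Gamma$ is the fundamental solution of $\Delta$ on $\Rn$. Writing $\tilde f$ for the \emph{odd} reflection of $f$ across $\{x_n=0\}$ and $\bar f$ for the \emph{even} one, a direct computation gives
\[
\nabla^2\mathbb{G}_D f=\big(T\tilde f\big)\big|_{\Rn_+},\qquad \nabla^2\mathbb{G}_N f=\big(T\bar f\big)\big|_{\Rn_+},
\]
where $T=\big(\partial_i\partial_j(-\Delta)^{-1}\big)_{i,j}$ is the vector of second-order Riesz transforms, a convolution operator whose kernel is smooth and homogeneous of degree $-n$ with vanishing mean on spheres. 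Thus $T$ is a Calderón--Zygmund operator satisfying $T(1)=T^*(1)=0$; it is bounded on $H^p_v(\Rn)$ for \emph{all} $0<p\le1$ and $v\in A_\vc(\Rn)$, on $L^p_v(\Rn)$ for all $1<p<\vc$ and $v\in A_p(\Rn)$, with the weak $(1,1)$ endpoint for $v\in A_1(\Rn)$. This is exactly the source of the improved weight classes: no reverse Hölder condition is needed once the heat-kernel machinery is replaced by the homogeneous singular integral $T$.

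Second, I would symmetrize the weight. Given $w\in A_p(\Rn)$ (resp.\ $A_\vc(\Rn)$), let $\tilde w$ be the even reflection of $w|_{\Rn_+}$; then $\tilde w$ is symmetric, still lies in $A_p(\Rn)$ (resp.\ $A_\vc(\Rn)$), and $\tilde w=w$ on $\Rn_+$. Because $\tilde w$ is symmetric, reflection acts as an isometry on $L^p_{\tilde w}$, $H^p_{\tilde w}$ and the weak spaces, and $\|\tilde f\|_{L^p_{\tilde w}(\Rn)}^p=\|\bar f\|_{L^p_{\tilde w}(\Rn)}^p=2\|f\|_{L^p_w(\Rn_+)}^p$. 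Part~(i) then follows at once: extend $f$, apply the $L^p_{\tilde w}$ (or weak-type) bound for $T$, and restrict, using $\|T\tilde f\|_{L^p_w(\Rn_+)}\le\|T\tilde f\|_{L^p_{\tilde w}(\Rn)}\lesssim\|\tilde f\|_{L^p_{\tilde w}(\Rn)}\sim\|f\|_{L^p_w(\Rn_+)}$.

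For the Hardy statements (ii)--(iv), the key is to identify the operator-adapted spaces with reflection-symmetric subspaces of $H^p_{\tilde w}(\Rn)$. Since $e^{-t\Delta_D}f=(e^{-t\Delta}\tilde f)|_{\Rn_+}$ and $e^{-t\Delta_N}f=(e^{-t\Delta}\bar f)|_{\Rn_+}$, the heat maximal function of $f$ relative to $\Delta_D$ (resp.\ $\Delta_N$) equals on $\Rn_+$ that of $\tilde f$ (resp.\ $\bar f$) relative to $\Delta$; combined with the maximal-function characterization of $H^p_{\tilde w}(\Rn)=H^p_w(\Rn)$ ($\Delta$ having Gaussian bounds) this yields $\|f\|_{H^p_{\Delta_D,w}(\Rn_+)}\sim\|\tilde f\|_{H^p_{\tilde w}(\Rn)}$ and $\|f\|_{H^p_{\Delta_N,w}(\Rn_+)}\sim\|\bar f\|_{H^p_{\tilde w}(\Rn)}$. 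Applying the $H^p_{\tilde w}$-boundedness of $T$ and restricting to $\Rn_+$ lands the image in $H^p_{r,\tilde w}(\Rn_+)\hookrightarrow H^p_{Mi,\tilde w}(\Rn_+)=H^p_{Mi,w}(\Rn_+)$, the embedding holding because the Miyachi maximal function is pointwise dominated by the full radial maximal function (Theorem~\ref{MiyachiTheorem}); this proves (ii) and (iv) for all $0<p\le1$ and $w\in A_\vc(\Rn)$. For (iii) one starts from $f\in H^p_{z,w}(\Rn_+)\subset H^p_w(\Rn)$ supported in $\overline{\Rn_+}$, writes the odd extension as $\tilde f=f-Rf$ with $R$ the reflection, uses boundedness of $R$ on $H^p_{\tilde w}(\Rn)$ to get $\tilde f\in H^p_{\tilde w}$, and concludes $\nabla^2\mathbb{G}_D f=(T\tilde f)|_{\Rn_+}\in H^p_{r,w}(\Rn_+)$; the Neumann consequence in (iv) is identical with $\bar f=f+Rf$. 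The ``as a consequence'' clauses then follow by inserting the identifications $H^p_{r,w}\equiv H^p_{Mi,w}$ and $H^p_{r,w}\hookrightarrow H^p_{\Delta_D,w}$ valid for $\tfrac{nq_w}{n+1}<p\le1$ recorded earlier.

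The main obstacle I anticipate is the rigorous control of the reflection and restriction operators on the \emph{weighted} Hardy scale. Establishing $\|f\|_{H^p_{\Delta_D,w}(\Rn_+)}\sim\|\tilde f\|_{H^p_{\tilde w}(\Rn)}$ requires passing between the adapted maximal/square-function norm and the classical grand-maximal norm uniformly in the truncation, and the boundedness of $R$ on $H^p_{\tilde w}$ used in (iii) is delicate near the boundary, since the odd extension of a Hardy atom sitting against $\{x_n=0\}$ need not be an atom and must instead be resolved into boundary molecules adapted to $\tilde w$. Verifying that $T$ returns these boundary molecules into $H^p_{\tilde w}(\Rn)$ for the full range $0<p\le1$---which forces one to use the higher-order cancellation of the homogeneous kernel against the moment order $\lfloor n(q_w/p-1)\rfloor$---is where the technical heart of the argument lies.
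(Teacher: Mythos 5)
Your reflection argument is a genuinely different route from the one taken in the paper. The paper never touches the Riesz transforms: it verifies, from the explicit reflected heat kernels $p_{t,\Delta_D}(x,y)=k_t(x-y)-k_t(x-y^*)$ and $p_{t,\Delta_N}(x,y)=k_t(x-y)+k_t(x-y^*)$ (with $k_t$ the Gauss--Weierstrass kernel), the hypotheses of its abstract Theorems \ref{thm1-Dirichlet}, \ref{thm3-Dirichlet}, \ref{thm1-Neumann} and \ref{thm3-Neumann} --- namely the $L^{p_0}$-boundedness of $\nabla^2\Delta_D^{-1}$ and $\nabla^2\Delta_N^{-1}$ for every $p_0<\vc$, the weighted $L^2$ bounds \eqref{L2 gradient estimate} and \eqref{L2 gradient estimate-Neumann}, and the pointwise derivative bounds \eqref{n derivative Holder-Dirichlet} and \eqref{n derivative Holder-Neumann} --- and the full classes $A_p(\Rn)$ and $A_\vc(\Rn)$ emerge by letting $p_0\to\vc$. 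Your approach short-circuits that machinery, explains more transparently why no reverse H\"older condition survives in the half-space case, and is essentially the strategy of \cite{CKS2} upgraded to weights. For part (i), part (ii) and the first clause of (iv) it is sound, granted the standard facts that the even reflection of $w|_{\Rn_+}$ stays in $A_p(\Rn)$, that $R_iR_j$ is bounded on $H^p_v(\Rn)$ for all $0<p\le 1$ and $v\in A_\vc(\Rn)$, and the heat-maximal characterization of $H^p_{\tilde w}(\Rn)$ (the $L=-\Delta$, $X=\Rn$ case of Theorem \ref{maximal function result}), which carries the whole weight of the identification $\|f\|_{H^p_{\Delta_D,w}(\Om)}\sim\|\tilde f\|_{H^p_{\tilde w}(\Rn)}$ and deserves an explicit citation. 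Note that for these parts the target $H^p_{Mi,w}(\Om)$ depends only on $w|_\Om=\tilde w|_\Om$, so the symmetrization is harmless there.

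There is, however, a concrete gap in part (iii) and in the second clause of (iv), and your closing paragraph does not locate it correctly. Once the weight is symmetrized, the reflection $R$ is trivially an isometry of $H^p_{\tilde w}(\Rn)$ (it permutes $(p,q,\tilde w)$-atoms), so that is not where the difficulty lies. The difficulty is that you interchange $H^p_w(\Rn)$ and $H^p_{\tilde w}(\Rn)$ twice without justification. On the input side, $f\in H^p_{z,w}(\Om)$ means $f\in H^p_w(\Rn)$ with $f\equiv 0$ on $\Om^c$, whereas your argument needs $f\in H^p_{\tilde w}(\Rn)$ before $R$ can be applied. On the output side, you produce the extension $T\tilde f\in H^p_{\tilde w}(\Rn)$, whereas membership in $H^p_{r,w}(\Om)$ requires an extension lying in $H^p_w(\Rn)$. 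Since $w$ and $\tilde w$ may differ arbitrarily on the lower half-space, neither interchange is automatic: an atomic decomposition of $f$ in $H^p_w(\Rn)$ may involve atoms centred deep in $\{x_n<0\}$, and the grand maximal function of $f$ is weighted differently there by $w$ and by $\tilde w$. This can very likely be repaired by proving that for tempered distributions supported in $\overline{\Rn_+}$ the two norms are comparable (dominating the grand maximal function at $x$ with $x_n<0$ by its value at the reflected point and invoking doubling), but that is a genuine lemma requiring proof, not a formality, and it is the technical heart of (iii) in your scheme.
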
  
\begin{rem}
In \cite{CKS2}, it was proved that $\nabla^2 \mathbb{G}_D$ is bounded on $H^p_{r}(\Om)$ for all $\f{n}{n+1}<p\le 1$, and $\nabla^2 \mathbb{G}_D$ and $\nabla^2 \mathbb{G}_N$ are bounded from $H^p_z(\Om)$ into $H^p_{r}(\Om)$ for all $0<p\le 1$. Hence, our results can be viewed as an extension of those in \cite{CKS2} to the weighted estimates, meanwhile the boundedness from $H^p_{\Delta_D,w}(\Om)$ into $H^p_{Mi,w}(\Om)$ for  $\nabla^2 \mathbb{G}_D$ are new even for unweighted case.
\end{rem}
\medskip

Our approach relies on the theory of Hardy spaces associated to operators which was initially developed in \cite{ADM} and has been studied intensively by many authors. See for example \cite{DY, HM, HLMMY} and the references therein. We first prove that the Hardy spaces defined via atomic decompositions and maximal functions are equivalent. See Theorem \ref{maximal function result}. This plays a crucial role in the proof of our main results and is interesting in its own right. We note that \cite{SY, SY2}
showed unweighted estimates when the underlying space has infinite measure.

We also remark that the atomic decompositions for the Hardy spaces were obtained in \cite{DHMMY, CCYY} by using the existing atomic decomposition results for the tent spaces. However, it seems that the approach in \cite{DHMMY, CCYY} is not applicable to our setting when the domain is bounded since the atomic decomposition results for the tent spaces might not be true for the bounded domains. To overcome this trouble, we adapt some ideas in \cite{BDK2} which makes use of kernel estimates for functional calculus, estimates for maximal functions and the Whitney covering lemma. Note that our approach can be easily applied to study the problems in the Musielak--Hardy spaces and this might be done elsewhere.

The organization of our paper is as follows. In Section 2, we prove the equivalence between the atomic Hardy spaces and maximal Hardy spaces in the general setting of spaces of homogeneous type. This result is interesting in its own right. The proofs of the main results will be addressed in Section 3.

\bigskip

\textbf{Notation.} As usual we use $C$ and $c$ to denote positive constants that are independent of the main parameters involved but may differ from line to line. The notation $A\lesi B$  means $A\leq CB$, and $A\sim B$ means that both $A\lesi B$ and $B\lesi A$ hold. We use $\fint_E fd\mu=\f{1}{\mu(E)}\int_E fd\mu$ to denote the average of $f$ over $E$.
We write $B(x,r)$ to denote the ball centred at $x$ with radius $r$. By a `ball $B$' we mean the ball $B(x_B, r_B)$ with some fixed centre $x_B$ and radius $r_B$. The annuli around a  given ball $B$ will be denoted by $S_j(B)=2^{j+1}B\backslash 2^jB$ for $j\ge 1$ and $S_0(B)=2B$ for $j=0$. 
       
\section{Weighted Hardy spaces associated to operators}

In this section, we study weighted Hardy spaces on a general space of homogeneous type $X$ which is of interest in its own right and has a 
doubling domain $\Omega \subset  \mathbb R^n$ as a special case.

Let $(X,d, \mu)$ be a metric space endowed with a nonnegative Borel measure $\mu$ satisfying the doubling condition: there exists a constant $C_1>0$ such that
\begin{equation}\label{doublingcondition}
\mu(B(x,2r))\leq C_1\mu(B(x,r))
\end{equation}
for all $x\in X$, $r>0$ and all balls $B(x,r):=\{y\in X: d(x,y)<r\}$. For the moment  $\mu(X)$ may be finite or infinite. 

It is not difficult to see that the condition \eqref{doublingcondition} implies that there exists a ``dimensional" constant $n\geq 0$ so that
\begin{equation}\label{doub2}
\mu(B(x,\lambda r))\leq C_2\lambda^n \mu(B(x,r))
\end{equation}
for all $x\in X, r>0$ and $\lambda\geq 1$, and
\begin{equation}\label{doub2s}
\mu(B(x, r))\leq C_3\mu(B(y,r))\Big(1+\f{d(x,y)}{r}\Big)^n
\end{equation}
for all $x,y\in X, r>0$.

A weight $w$ is a non-negative measurable and locally integrable function on $X$.
We say that $w$ belongs to the Muckenhoupt class $A_p(X)$ for $1 < p < \infty$, if there exists a
constant $C$ such that for every ball $B \subset X$,
$$
\Big(\fint_B w(x)\dx\Big)\Big(\fint_B w^{-1/(p-1)}(x)\dx\Big)^{p-1}\leq C.
$$
For $p = 1$, we say that $w \in A_1(X)$ if there is a constant $C$ such
that for every ball $B \subset X$,
$$
\fint_B w(y)\dy \leq Cw(x) \ \text{for a.e. $x\in B$}.
$$
We define $A_\vc(X)=\cup_{p\geq 1}A_p(X)$.\\

The reverse H\"older classes of weights $RH_q$ are defined in the following way: $w
\in RH_q, 1 < q < \infty$, if there is a constant $C$ such that for
any ball $B \subset X$,
$$
\Big(\fint_B w^q(y) \dy\Big)^{1/q} \leq C \fint_B w(x)\dx.
$$
The endpoint $q = \infty$ is given by the condition: $w \in
RH_\infty$ whenever, there is a constant $C$ such that for any ball
$B \subset X$,
$$
w(x)\leq C \fint_B w(y)\dy  \ \text{for a.e. $x\in B$}.
$$
Let $w \in A_\vc(\Rn)$, for $1\leq p <\infty$, the weighted spaces $L^p_w(X)$
can be defined by
$$\Big\{f :\int_{X} |f(x)|^p w(x)\dx < \infty\Big\}$$
with the norm
$$\|f\|_{L^p_w(X)}=\Big(\int_{X} |f(x)|^p w(x)\dx\Big)^{1/p}.$$

We sum up some of the standard properties of classes of weights  in the following lemma. For the proofs, see for example \cite{ST}.
\begin{lem}\label{weightedlemma1}
	The following properties hold:
	\begin{enumerate}[(i)]
		\item $A_1(X)\subset A_p(X)\subset A_q(X)$ for $1< p\leq q< \infty$.
		\item $RH_\infty (X) \su RH_q(X) \su RH_p(X)$ for $1< p\leq q< \infty$.
		\item If $w \in A_p(X), 1 < p < \vc$, then there exists $1<r < p < \vc$ such that $w \in A_r$.
		\item If $w \in RH_q(X), 1 < q < \vc$, then there exists $q < p < \vc$ such that $w \in RH_p(X)$.
		\item $ A_\vc(X) =\cup_{1\leq p<\vc}A_p(X)\subset  \cup_{1< q\leq \vc}RH_q(X)$
	\end{enumerate}
\end{lem}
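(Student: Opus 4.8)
The plan is to separate the five assertions into the two elementary inclusions (i)--(ii), which follow from Hölder's and Jensen's inequalities, and the three self-improvement statements (iii)--(v), which all rest on one deeper fact: the reverse Hölder inequality enjoyed by $A_\vc(X)$ weights on a space of homogeneous type. For (i), the inclusion $A_p(X)\subset A_q(X)$ for $p\le q$ follows by writing the dual weight $w^{-1/(q-1)}=(w^{-1/(p-1)})^{\theta}$ with $\theta=(p-1)/(q-1)\in(0,1]$ and applying Jensen's inequality to the concave map $t\mapsto t^{\theta}$; this shows that the $A_q$ characteristic of a ball is dominated by its $A_p$ characteristic. The inclusion $A_1(X)\subset A_p(X)$ follows because the $A_1$ condition $\fint_B w\le Cw(x)$ a.e.\ gives a pointwise lower bound on $w$, hence an upper bound on $\fint_B w^{-1/(p-1)}$, which multiplied by $\fint_B w$ returns a finite constant. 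For (ii), $RH_q(X)\subset RH_p(X)$ for $p\le q$ is immediate from the monotonicity of $L^p$-averages (Jensen again), while $RH_\infty(X)\subset RH_q(X)$ follows by bounding $\fint_B w^q\le (\esssup_B w)^{q-1}\fint_B w$ and invoking the $RH_\infty$ condition.

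For (iii), (iv) and (v) the engine is the self-improving (reverse Hölder) property. First I would record the duality $w\in A_p(X)\Longleftrightarrow w^{-1/(p-1)}\in A_{p'}(X)$, where $p'=p/(p-1)$, which is verified directly from the definitions. Since every $A_p(X)$ weight lies in $A_\vc(X)$, the key step is to establish the reverse Hölder inequality on $X$: there exist $\delta>0$ and $C$ with $\big(\fint_B w^{1+\delta}\big)^{1/(1+\delta)}\le C\fint_B w$ for all balls $B$. For (iii), I would apply this to the dual weight $\sigma=w^{-1/(p-1)}\in A_{p'}(X)\subset A_\vc(X)$, obtaining an exponent $1+\delta$, and then choose $r$ with $(p-1)/(r-1)=1+\delta$, that is $r=1+(p-1)/(1+\delta)<p$; substituting the reverse Hölder bound for $\sigma$ into the $A_r$ characteristic yields $w\in A_r(X)$. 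For (iv), the statement is exactly Gehring's lemma: an $RH_q$ inequality self-improves to $RH_{q+\epsilon}$ for some $\epsilon>0$. For (v), the equality $A_\vc(X)=\bigcup_{1\le p<\vc}A_p(X)$ is the definition, and $A_\vc(X)\subset \bigcup_{1<q\le\infty}RH_q(X)$ is precisely the reverse Hölder inequality, which places any $A_\vc$ weight into $RH_{1+\delta}(X)$.

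The main obstacle is proving the reverse Hölder inequality and Gehring's lemma on the space of homogeneous type $X$ rather than on $\Rn$. On $\Rn$ one argues with dyadic cubes and the Calderón--Zygmund decomposition; on $X$ I would replace these by a good-$\lambda$ inequality built from a Calderón--Zygmund decomposition adapted to balls through the doubling condition \eqref{doublingcondition} (equivalently, through a dyadic/Whitney structure on $X$), together with the basic comparison that for an $A_\vc$ weight one has $w(E)/w(B)$ controlled by a power of $\mu(E)/\mu(B)$ for measurable $E\subset B$. Once the good-$\lambda$ inequality is in hand, integration over level sets produces the reverse-Hölder self-improvement uniformly in the ball, and (iii)--(v) follow by the formal substitutions above. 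I expect that the only place where the homogeneous-type setting genuinely requires care, as opposed to formal manipulation of the defining inequalities, is in setting up this covering and decomposition argument, since the doubling property must do all the work previously supplied by the Euclidean dyadic grid.
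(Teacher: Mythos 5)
The paper gives no proof of this lemma: it is stated as a list of standard facts with the pointer ``for the proofs, see for example \cite{ST}'', so there is no in-house argument to measure yours against. Your sketch is, in outline, exactly the argument carried out in that reference: (i)--(ii) are the one-line Jensen/H\"older computations you describe, and (iii)--(v) all reduce to the reverse H\"older inequality for $A_p$ weights, which on a space of homogeneous type is proved by a Calder\'on--Zygmund/good-$\lambda$ argument run over a dyadic (Christ-type) or Whitney structure in place of the Euclidean grid, as you anticipate. The one point worth flagging in your plan is the final step: in this generality the covering argument most naturally produces the reverse H\"older inequality in the weak form $\bigl(\fint_B w^{1+\delta}\bigr)^{1/(1+\delta)}\lesssim \fint_{2B} w$, with a dilated ball on the right (the plain form is known to degrade in general spaces of homogeneous type). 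This is harmless here: for $w\in A_p(X)$ with $p<\vc$ the $A_p$ condition gives $\mu(E)/\mu(B)\le C\,\bigl(w(E)/w(B)\bigr)^{1/p}$ for measurable $E\subset B$, hence $w(2B)\lesssim w(B)$ by \eqref{doub2}, so $\fint_{2B}w\lesssim\fint_B w$ and the plain $RH_{1+\delta}$ condition as defined in the paper follows. With that observation your formal substitutions for (iii) (via the dual weight $w^{-1/(p-1)}\in A_{p'}(X)$), (iv) (Gehring's lemma) and (v) are the standard ones and are correct.
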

For $w\in A_\vc(\Rn)$ we define 
\begin{equation}
\label{qw-defn}
q_w=\sup\{p\in (1,\vc): w\in A_p(X)\}.
\end{equation}

In this paper, we will also assume  the existence of an operator $L$ that satisfies the following two conditions:
\begin{enumerate}
	\item[(A1)] $L$ is  a nonnegative self-adjoint operator on $L^2(X)$;
	\item[(A2)] $L$ generates a semigroup $\{e^{-tL}\}_{t>0}$ whose kernel $p_t(x,y)$ admits a Gaussian upper bound. That is, there exist two positive constants $C$ and  $c$ so that for all $x,y\in X$ and $t>0$,
	\begin{equation}
	\tag{GE}\label{GE}
	\displaystyle |p_t(x,y)|\leq \f{C}{\mu(B(x,\sqrt{t}))}\exp\Big(-\f{d(x,y)^2}{ct}\Big).
	\end{equation}
\end{enumerate}
Then for $0<p\le 1$ one can define three types of Hardy spaces related to $L$. The first type is through linear combinations of atoms that appropriately encode the cancellation inherent in $L$. The second and third types  are $H^p_{L,\max}$ and $H^p_{L,\rad}$, which are defined via the non-tangential maximal function and the radial maximal function respectively. For the reader's  convenience, we recall these Hardy spaces below.

\begin{defn}[Atoms for $L$]\label{def: L-atom}
	Let $p\in (0,1]$, $q\in (1,\vc]$, $M\in \mathbb{N}$ and $w\in A_\vc(X)$. A function $a$ supported in a ball $B$ is called an  $(L,p,q,w,M)$-atom if there exists a
	function $b\in {\mathcal D}(L^M)$ such that
	\begin{enumerate}[{\rm (i)}]
		\item  $a=L^M b$;
		\item $\supp L ^{k}b\subset B, \ k=0, 1, \dots, M$;
		\item $\|L^{k}b\|_{L^q_w(X)}\leq
		r_B^{2(M-k)}w(B)^{\f{1}{q}-\f{1}{p}},\ k=0,1,\dots,M$.
	\end{enumerate}
	In the particular case where $\mu(X)<\vc$, the constant function $[w(X)]^{-1/p}$ is also considered as an atom.
\end{defn}

In contrast to the concept of atoms in \cite{CCYY} in which the atoms are defined via $L^q$-norms, in Definition \ref{def: L-atom} our atoms  are defined via the weighted $L^q$-norms. This plays an essential role in proving the weighted Hardy estimates for the Green operators.  

Given $p\in (0,1]$, $q\in (1,\vc]$, $M\in \mathbb{N}$ and $w\in A_\vc(X)$, we  say that $f=\sum
_{j=1}^{\infty}\lambda_ja_j$ is an atomic $(L,p,q,w,M)$-representation if
$\{\lambda_j\}_{j=0}^\infty\in l^p$, each $a_j$ is a $(L,p,q,w,M)$-atom,
and the sum converges in $L^2(X)$. The space $H^{p,q}_{L,w,at,M}(X)$ is then defined as the completion of
\[
\left\{f\in L^2(X):f \ \text{has an atomic
	$(L,p,q,w,M)$-representation}\right\},
\]
with the norm given by
$$
\|f\|_{H^{p,q}_{L,w,at,M}(X)}=\inf\left\{\left(\sum_{j=1}^{\infty} |\lambda_j|^p\right)^{1/p}:
f=\sum_{j=1}^{\infty} \lambda_ja_j \ \text{is an atomic $(L,p,q,w,M)$-representation}\right\}.
$$

\begin{defn}[Maximal Hardy spaces for $L$]\label{defn-maximal Hardy spaces}
	
	For $f\in L^2(X)$, we define the \textit{non-tangential}  maximal function associated to $L$ of $f$ by
	\[
	f^*_{L}(x)=\sup_{0<t<d^2_X}\sup_{d(x,y)<t}|e^{-tL}f(y)|
	\]
	and the \textit{radial} maximal function by
	\[
	f^+_{L}(x)=\sup_{0<t<d^2_X}|e^{-tL}f(x)|
	\]
    where $d_X = {\rm diam\,}X$.

	Given $p\in (0,1]$ and $w\in A_\vc(X)$,  the Hardy space $H^{p}_{L, w,{\rm max}}(X)$ is defined as the completion of
	$$
	\left\{f
	\in L^2(X): f^*_{L} \in L^p_w(X)\right\},
	$$
	with the norm given by
	$$
	\|f\|_{H^{p}_{L, w,{\rm max}}(X)}=\|f^*_{L}\|_{L^p_w(X)}.
	$$
	
	Similarly, the Hardy space $H^{p}_{L, w,{\rm rad}}(X)$ is defined as the completion of
	$$
	\left\{f
	\in L^2(X): f^+_{L} \in L^p_w(X)\right\},
	$$
	with the norm given by
	$$
	\|f\|_{H^{p}_{L,w,{\rm rad}}(X)}=\|f^+_{L}\|_{L_w^p(X)}.
	$$
\end{defn}

 We will show that the three types of Hardy space are equivalent as in the following result.
\begin{thm}\label{maximal function result}
	Let $X$ be a space of homogeneous type with finite or infinite measure and let $L$ be an operator satisfying (A1) and (A2). Let $p\in (0,1]$, $w\in A_\vc(X)$, $q\in (q_w,\vc]$, and $M>\f{n}{2}\big(\f{q_w}{p}-1\big)$. Then the Hardy spaces $H^{p,q}_{L,w,at,M}(X)$, $H^{p}_{L, w,{\rm max}}(X)$ and $H^{p}_{L,w, {\rm rad}}(X)$ coincide with equivalent norms. 
\end{thm}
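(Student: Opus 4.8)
The plan is to establish the chain of continuous inclusions
\[
H^{p,q}_{L,w,at,M}(X)\hookrightarrow H^{p}_{L,w,\mathrm{max}}(X)\hookrightarrow H^{p}_{L,w,\mathrm{rad}}(X)\hookrightarrow H^{p,q}_{L,w,at,M}(X),
\]
each with a norm bound, so that the three completions coincide. The middle inclusion is trivial since $f^+_L\le f^*_L$ pointwise, giving $\|f\|_{H^p_{L,w,\mathrm{rad}}}\le\|f\|_{H^p_{L,w,\mathrm{max}}}$ for free. Thus the real work is in the two outer inclusions.

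For the inclusion $H^{p,q}_{L,w,at,M}\hookrightarrow H^{p}_{L,w,\mathrm{max}}$, I would first reduce to a single atom: it suffices to show $\|a^*_L\|_{L^p_w(X)}\lesi 1$ uniformly over $(L,p,q,w,M)$-atoms $a$, and then sum using $p$-subadditivity of the $L^p_w$ quasi-norm together with $\sum|\lambda_j|^p<\vc$ (and a standard argument that the atomic representation converges in the maximal quasi-norm). For a fixed atom $a=L^M b$ supported in a ball $B$, I would split the estimate of $a^*_L$ into the local part on a fixed dilate such as $4B$ and the tail on the annuli $S_j(B)$ for $j\ge 2$. On the local part one uses the $L^q_w$ boundedness of the maximal operator $f\mapsto f^*_L$ (which follows from the Gaussian bound (GE) via a Fefferman--Stein / vector-valued Hardy--Littlewood argument, valid since $w\in A_\vc$ and $q>q_w$) together with the normalization $\|a\|_{L^q_w}\le w(B)^{1/q-1/p}$ and Hölder's inequality with the weighted measure. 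On the far annuli one exploits the cancellation $a=L^M b$: writing $e^{-tL}a=t^{-M}(tL)^M e^{-tL}b$ and using the Gaussian-type off-diagonal bounds for the kernel of $(tL)^Me^{-tL}$, one gains a factor $(r_B^2/t)^M$ or, when $t\lesi d(x,B)^2$, an off-diagonal decay $\big(r_B/d(x,S_j(B))\big)^{2M}$ in the distance from $x$ to $B$; here the hypothesis $M>\tfrac n2(\tfrac{q_w}{p}-1)$ is exactly what makes the resulting geometric series in $j$ summable against the weight $w$, using the $A_\vc$ doubling bound $w(2^jB)\lesi 2^{jnq_w}w(B)$.

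The inclusion $H^{p}_{L,w,\mathrm{rad}}\hookrightarrow H^{p,q}_{L,w,at,M}$ is the heart of the theorem, since it must manufacture atoms out of a function whose radial maximal function lies in $L^p_w$. This is where I would follow the strategy indicated in the introduction, adapting the method of \cite{BDK2} rather than tent-space decompositions (which the authors note may fail on bounded domains). Concretely, given $f$ with $f^+_L\in L^p_w(X)$, I would work with the Calderón reproducing formula
\[
f=c_M\int_0^{\infty}(t^2L)^{M+1}e^{-t^2L}f\,\frac{dt}{t}
\]
and perform a Whitney-type decomposition of the open level sets $\Om_k=\{x: f^+_L(x)>2^k\}$; over each Whitney ball one uses the kernel estimates for the functional calculus of $L$ together with control of $f$ by $f^+_L$ (and of the relevant square-function/maximal quantities by $f^+_L$) to produce molecules, which the moment/decay condition from $M>\tfrac n2(\tfrac{q_w}{p}-1)$ then allows one to reassemble into genuine $(L,p,q,w,M)$-atoms with the correct $\ell^p$ control of coefficients, namely $\sum_k\sum_j|\lambda_{k,j}|^p\lesi\sum_k 2^{kp}w(\Om_k)\sim\|f^+_L\|_{L^p_w}^p$. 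The finite-measure case $\mu(X)<\vc$ requires the separate treatment signalled in Definition \ref{def: L-atom}: the lowest level set may be all of $X$, and the constant atom $[w(X)]^{-1/p}$ absorbs the corresponding mean term.

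I expect the main obstacle to be precisely this last inclusion, and within it two delicate points: first, obtaining the atomic decomposition by the kernel/Whitney method uniformly across both the finite- and infinite-measure settings, since the bounded-domain case forces the constant-atom correction and blocks the tent-space route; and second, tracking the weight through the annular sums so that the single threshold $M>\tfrac n2(\tfrac{q_w}{p}-1)$ suffices for all $q\in(q_w,\vc]$ simultaneously. The key technical inputs throughout are the Gaussian bound (GE) and its consequences (off-diagonal $L^q_w$ estimates for $(t^2L)^{M}e^{-t^2L}$), the weighted maximal function boundedness from Lemma \ref{weightedlemma1}, and the doubling estimate \eqref{doub2} transported to the weight via the $A_\vc$ property.
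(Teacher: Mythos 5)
Your overall architecture --- a cycle of three continuous inclusions, with the atomic-to-maximal direction reduced to a uniform estimate on single atoms, and the maximal-to-atomic direction done by a Whitney decomposition of level sets in the spirit of \cite{BDK2}, with the constant atom absorbing the bottom level set when $\mu(X)<\vc$ --- matches the paper's strategy. But there is one concrete gap in the hard direction. You propose to run the Calder\'on reproducing formula with the heat semigroup, $f=c_M\int_0^\vc (t^2L)^{M+1}e^{-t^2L}f\,\frac{dt}{t}$. The kernel of $(t^2L)^{M}e^{-t^2L}$ is not compactly supported, so the pieces $b_{i,k}$ built by cutting the integrand to the Whitney sets are only molecules: they violate condition (ii) of Definition \ref{def: L-atom}, which demands $\supp L^kb\subset B$ for $k=0,\dots,M$. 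Your remark that the molecules can be ``reassembled into genuine atoms'' is exactly the step that needs proof, and it is not routine here: a molecular characterization of $H^{p,q}_{L,w,at,M}(X)$ in the weighted, possibly finite-measure setting is not established in the paper and would itself require an argument of comparable length. The paper sidesteps this entirely by replacing $e^{-t^2L}$ with $\Phi(t\sqrt L)$, where $\Phi=\widehat\varphi$ for $\varphi\in C^\vc_0(-1,1)$; by finite propagation speed (Lemma \ref{lem:finite propagation}) the operator $\Phi(t\sqrt L)$ moves supports by at most $t$, and since the construction forces $t\lesssim r_{B_{i,k}}$ whenever the cut-off integrand is nonzero, one obtains $\supp L^mb_{i,k}\subset 8B_{i,k}$ directly. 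This choice of reproducing formula is the key idea of the proof and is missing from your write-up.

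A second, lesser point: you decompose the level sets of the radial maximal function $f^+_L$, whereas the construction must control $\Phi(t\sqrt L)f$ (and an auxiliary $\psi(t\sqrt L)f$) at points within distance $8t$ of the Whitney sets by the level $2^{i+1}$. This requires first proving that non-tangential, indeed Peetre-type, maximal functions of general $\varphi(t\sqrt L)f$ are controlled in $L^p_w(X)$ by the radial one --- Propositions \ref{prop1} and \ref{prop2} of the paper, proved via the decomposition of Lemma \ref{lem-DKP} and a weighted Fefferman--Stein argument with an exponent $\theta$ satisfying $p/\theta>q_w$. You gesture at this in a parenthesis, but it is a substantial ingredient; it is also what allows the paper to verify the atomic inclusion by estimating only the radial maximal function of an atom. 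With these two ingredients supplied, your outline closes correctly.
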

We note that the unweighted case of the theorem was proved in \cite{BDK2}. The proof of Theorem
\ref{maximal function result} will be given at the end of this section.

\subsection{Some maximal function estimates}\label{sect-prelim}

Let  $L$  satisfy (A1) and (A2). Denote by $E_L(\lambda)$ the spectral decomposition of $L$. Then by spectral theory, for any bounded Borel funtion $F:[0,\vc)\to \mathbb{C}$ we can define
$$
F(L)=\int_0^\vc F(\lambda)dE_L(\lambda)
$$
as a bounded operator on $L^2(X)$. It is well-known that the kernel $K_{\cos(t\sqrt{L})}$ of $\cos(t\sqrt{L})$ satisfies 
\begin{equation}\label{finitepropagation}
{\rm supp}\,K_{\cos(t\sqrt{L})}\subset \{(x,y)\in X\times X:
d(x,y)\leq t\}.
\end{equation}
See for example \cite{CS}.
We have the following useful lemmas.
\begin{lem}[\cite{HLMMY}]\label{lem:finite propagation}
	Let $\varphi\in C^\vc_0(\mathbb{R})$ be an even function with {\rm supp}\,$\varphi\subset (-1, 1)$ and $\int \varphi =2\pi$. Denote by $\Phi$ the Fourier transform of $\varphi$.  Then for every $k\in \mathbb{N}$, the kernel $K_{(t\sqrt{L})^k\Phi(t\sqrt{L})}(\cdot,\cdot)$ of $(t\sqrt{L})^k\Phi^{(\ell)}(t\sqrt{L})$ satisfies 
	\begin{equation}\label{eq1-lemPsiL}
	\displaystyle
	{\rm supp}\,K_{(t\sqrt{L})^k\Phi(t\sqrt{L})}\subset \{(x,y)\in X\times X:
	d(x,y)\leq t\},
	\end{equation}
	and
	\begin{equation}\label{eq2-lemPsiL}
	|K_{(t\sqrt{L})^k\Phi(t\sqrt{L})}(x,y)|\leq \f{C}{\mu(B(x,t))}.
	\end{equation}
	%and
	%\begin{equation}\label{eq4-lemPsiL}
	%\Big|K_{(t^2L)^k\Phi^{(\ell)}(t\sqrt{L})}(x,y)-K_{(t^2L)^k\Phi^{(m)}(t%\sqrt{L})}(x',y)\Big|\leq %\f{C}{\mu(B(x,t))}\Big(\f{d(x,x')}{t}\Big)^{\delta_1}
	%\end{equation}
	%for all $t>0$ and $d(x,x')<t$.
\end{lem}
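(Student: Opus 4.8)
The plan is to set $F(\lambda)=\lambda^k\Phi(\lambda)$, so that the operator under study is $F(t\sqrt L)$, and to treat the support statement and the size bound separately: the support is the easy part, a direct consequence of finite propagation speed, while the pointwise size bound is where the real work lies. For the support I would first record the spectral identity coming from Fourier inversion. Since $\varphi$ is even and supported in $(-1,1)$, for each $\lambda\ge 0$ we have $\Phi(t\sqrt\lambda)=\int_{-1}^{1}\varphi(s)\cos(st\sqrt\lambda)\,ds$, hence by the spectral theorem
\[
\Phi(t\sqrt L)=\int_{-1}^{1}\varphi(s)\cos(st\sqrt L)\,ds .
\]
To bring in the factor $(t\sqrt L)^{k}$ I would use $\frac{d^2}{ds^2}\cos(st\sqrt L)=-(t\sqrt L)^2\cos(st\sqrt L)$, so that applying $(t\sqrt L)^{2m}=(t^2L)^m$ (the even powers, which are the ones relevant here) amounts to $2m$ derivatives in $s$; integrating by parts $k$ times and using $\varphi\in C^\infty_c((-1,1))$ to kill all boundary terms gives
\[
(t\sqrt L)^{k}\Phi(t\sqrt L)=(-1)^{k/2}\int_{-1}^{1}\varphi^{(k)}(s)\cos(st\sqrt L)\,ds .
\]
Passing to kernels and invoking \eqref{finitepropagation}, namely $\supp K_{\cos(st\sqrt L)}\subset\{d(x,y)\le|s|t\}$, the integrand vanishes when $d(x,y)>|s|t$; since $|s|\le 1$ this forces $K_{(t\sqrt L)^k\Phi(t\sqrt L)}(x,y)=0$ for $d(x,y)>t$. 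The same computation applies to $\Phi^{(\ell)}(t\sqrt L)$ after replacing $\varphi$ by $(-is)^\ell\varphi$, which still lies in $C^\infty_c((-1,1))$.

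For the size bound I would not estimate the kernel directly but factor the symbol through a resolvent power. Fixing $N\in\mathbb N$ large, write $F=A\cdot B$ with
\[
A(\lambda)=\lambda^k(1+\lambda^2)^{N}\Phi(\lambda),\qquad B(\lambda)=(1+\lambda^2)^{-N},
\]
so that $F(t\sqrt L)=A(t\sqrt L)\,(I+t^2L)^{-N}$ and, writing the composition as an integral over the intermediate variable,
\[
|K_{F(t\sqrt L)}(x,y)|\le \NormA{K_{A(t\sqrt L)}(x,\cdot)}_{L^2(X)}\,\NormA{K_{(I+t^2L)^{-N}}(\cdot,y)}_{L^2(X)}
\]
by Cauchy--Schwarz. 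The crucial point is a diagonal $L^2$ estimate: for a Schwartz function $h$ one has $\NormA{K_{h(t\sqrt L)}(x,\cdot)}_{L^2}^2=K_{|h|^2(t\sqrt L)}(x,x)$; setting $g=|h|^2$, which is even, nonnegative and Schwartz, I would dominate $g(\lambda)\le C_M(1+\lambda^2)^{-M}$ for $M$ as large as needed (using that $\Phi$ is Schwartz), so that $0\le g(t\sqrt L)\le C_M(I+t^2L)^{-M}$ as positive operators and hence $K_{g(t\sqrt L)}(x,x)\le C_M\,K_{(I+t^2L)^{-M}}(x,x)$.

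It then remains to bound the diagonal of the resolvent power, and this is where \eqref{GE} and the doubling condition enter. Using the subordination formula $(I+t^2L)^{-M}=\frac{1}{\Gamma(M)}\int_0^\infty s^{M-1}e^{-s}e^{-st^2L}\,ds$ together with the on-diagonal consequence $p_{st^2}(x,x)\le C/\mu(B(x,t\sqrt s))$ of \eqref{GE}, and the doubling bound $\mu(B(x,t))\le C_2\,s^{-n/2}\mu(B(x,t\sqrt s))$ for $s\le1$ (with the trivial comparison for $s\ge1$), I would obtain $K_{(I+t^2L)^{-M}}(x,x)\le C/\mu(B(x,t))$ provided $M>n/2$. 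Feeding this back yields $\NormA{K_{A(t\sqrt L)}(x,\cdot)}_{L^2}\le C\mu(B(x,t))^{-1/2}$ and likewise for the $B$-factor, whence $|K_{F(t\sqrt L)}(x,y)|\le C\,\mu(B(x,t))^{-1/2}\mu(B(y,t))^{-1/2}$. The support statement already proved restricts attention to $d(x,y)\le t$, where $\mu(B(y,t))\sim\mu(B(x,t))$ by \eqref{doub2s}, and this gives the desired bound $|K_{(t\sqrt L)^k\Phi(t\sqrt L)}(x,y)|\le C/\mu(B(x,t))$. The main obstacle is exactly this size estimate: the support is a one-line consequence of finite propagation speed, whereas the pointwise on-diagonal bound requires the factorization, the positivity/spectral comparison, and the careful interplay of \eqref{GE} with doubling.
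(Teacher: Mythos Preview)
The paper does not give its own proof of this lemma; it is quoted from \cite{HLMMY}, so there is no in-paper argument to compare against. Your proposal is essentially correct and follows the standard route one would take to reprove this result in the presence of (A1), (A2) and doubling.

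A few remarks. For the support condition, your integration-by-parts identity
\[
(t\sqrt L)^{k}\Phi(t\sqrt L)=(-1)^{k/2}\int_{-1}^{1}\varphi^{(k)}(s)\cos(st\sqrt L)\,ds
\]
is only literally valid for even $k$; you flag this (``the even powers, which are the ones relevant here''), and indeed only $k=2M$ is ever used in the paper (see the Calder\'on formula~\eqref{Calderon forula}). For odd $k$ one has to pass through $\sin(st\sqrt L)$ instead, which also enjoys finite propagation, but your restriction to even $k$ is harmless for the applications. Your remark that replacing $\varphi$ by $(-is)^\ell\varphi\in C^\infty_c((-1,1))$ covers the $\Phi^{(\ell)}$ variant is correct.

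For the size bound, your factorization $F=A\cdot B$ with $B=(1+\lambda^2)^{-N}$, the Cauchy--Schwarz step on the composed kernel, the diagonal identity $\|K_{h(t\sqrt L)}(x,\cdot)\|_{L^2}^2=K_{|h|^2(t\sqrt L)}(x,x)$, the spectral comparison $|h|^2\le C_M(1+\lambda^2)^{-M}$, and the subordination estimate
\[
K_{(I+t^2L)^{-M}}(x,x)=\frac{1}{\Gamma(M)}\int_0^\infty s^{M-1}e^{-s}p_{st^2}(x,x)\,ds\le \frac{C}{\mu(B(x,t))}\quad (M>n/2)
\]
are all valid and combine exactly as you describe. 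The final use of the already-proved support to replace $\mu(B(y,t))$ by $\mu(B(x,t))$ via \eqref{doub2s} is clean. This is precisely the type of argument behind the cited result in \cite{HLMMY}, and there is no gap.
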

\begin{lem}
	\label{lem1}
	\begin{enumerate}[{\rm (a)}]
		\item Let $\varphi\in \mathscr{S}(\mathbb{R})$ be an even function. Then for any $N>0$ there exists $C$ such that 
		\begin{equation}
		\label{eq1-lema1}
		|K_{\varphi(t\sqrt{L})}(x,y)|\leq \f{C}{\mu(B(x,t))+\mu(B(y,t))}\Big(1+\f{d(x,y)}{t}\Big)^{-n-N},
		\end{equation}
		for all $t>0$ and $x,y\in X$.
		\item Let $\varphi_1, \varphi_2\in \mathscr{S}(\mathbb{R})$ be even functions. Then for any $N>0$ there exists $C$ such that
		\begin{equation}
		\label{eq2-lema1}
		|K_{\varphi_1(t\sqrt{L})\varphi_2(s\sqrt{L})}(x,y)|\leq C\f{1}{\mu(B(x,t))+\mu(B(y,t))}\Big(1+\f{d(x,y)}{t}\Big)^{-n-N},
		\end{equation}
		for all $t\leq s<2t$ and $x,y\in X$.
		\item Let $\varphi_1, \varphi_2\in \mathscr{S}(\mathbb{R})$ be even functions with $\varphi^{(\nu)}_2(0)=0$ for $\nu=0,1,\ldots,2\ell$ for some $\ell\in\mathbb{Z}^+$. Then for any $N>0$ there exists $C$ such that
		\begin{equation}
		\label{eq3-lema1}
		|K_{\varphi_1(t\sqrt{L})\varphi_2(s\sqrt{L})}(x,y)|\leq C\Big(\f{s}{t}\Big)^{2\ell} \f{1}{\mu(B(x,t))+\mu(B(y,t))}\Big(1+\f{d(x,y)}{t}\Big)^{-n-N},
		\end{equation}
		for all $t\geq s>0$ and $x,y\in X$.
	\end{enumerate}
\end{lem}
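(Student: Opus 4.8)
The plan is to prove (a) first and then obtain (b) and (c) as algebraic consequences of (a), in each case reducing the product to a single even Schwartz function of $t\sqrt{L}$ whose seminorms are controlled uniformly in the ratio $s/t$, so that the already-established part (a) applies at the coarser scale $t$.

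For (a), I would start from the Fourier inversion formula: since $\varphi$ is even, $\varphi(t\sqrt{L})=\frac{1}{2\pi}\int_{\mathbb{R}}\hat\varphi(\xi)\cos(\xi t\sqrt{L})\,d\xi$, and invoke finite propagation speed \eqref{finitepropagation}, which localises each $\cos(\xi t\sqrt{L})$ to $\{d(x,y)\le|\xi|t\}$. Next I would insert a dyadic partition of unity $1=\sum_{j\ge0}\omega_j$ in the variable $\xi$, with $\omega_0$ supported in $|\xi|\le2$ and $\omega_j(\xi)=\omega(2^{-j}\xi)$ supported in $2^{j-1}\le|\xi|\le2^{j+1}$, and set $\Phi_j(\lambda)=\frac{1}{2\pi}\int\hat\varphi(\xi)\omega_j(\xi)\cos(\lambda\xi)\,d\xi$. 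A rescaling $\xi=2^j\zeta$ identifies $\Phi_j(t\sqrt{L})=2^j\tilde\eta_j(2^jt\sqrt{L})$, where $\tilde\eta_j$ is an even function whose Fourier data is supported in $(-2,2)$; by finite propagation speed together with the on-diagonal consequence of \eqref{GE}, i.e. exactly the mechanism behind Lemma \ref{lem:finite propagation}, its kernel is supported in $\{d(x,y)\le2^{j+1}t\}$ and bounded by $C\|\eta_j\|_\star\,\mu(B(x,2^jt))^{-1}$ for a fixed Schwartz seminorm $\|\cdot\|_\star$. The Schwartz decay of $\hat\varphi$ gives $\|\eta_j\|_\star\lesssim_N 2^{-jN}$, so that $|K_{\Phi_j(t\sqrt{L})}(x,y)|\lesssim_N 2^{-j(N-1)}\mu(B(x,2^jt))^{-1}$ on $\{d(x,y)\le2^{j+1}t\}$.

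I would then sum over $j$. For fixed $x,y$ only the indices with $2^{j+1}t\ge d(x,y)$ contribute; using $\mu(B(x,2^jt))\ge\mu(B(x,t))$ and choosing $N$ large, the resulting geometric sum yields $|K_{\varphi(t\sqrt{L})}(x,y)|\lesssim\mu(B(x,t))^{-1}(1+d(x,y)/t)^{-n-N'}$ for arbitrary $N'$. Finally I would symmetrise the denominator: \eqref{doub2s} lets me trade $\mu(B(x,t))^{-1}$ for $\mu(B(y,t))^{-1}$ at the cost of a factor $(1+d(x,y)/t)^{n}$, which is absorbed by enlarging $N'$, producing the stated bound \eqref{eq1-lema1} with $\mu(B(x,t))+\mu(B(y,t))$ in the denominator. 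I expect this summation step, and in particular keeping track of the finitely many seminorms on which the building-block constant depends so that the rescaled dyadic pieces $\tilde\eta_j$ are uniformly controlled, to be the main technical point of the whole lemma.

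Given (a), part (b) is immediate: when $t\le s<2t$, write $\varphi_1(t\sqrt{L})\varphi_2(s\sqrt{L})=\Psi(t\sqrt{L})$ with $\Psi(\lambda)=\varphi_1(\lambda)\varphi_2((s/t)\lambda)$. Since $s/t\in[1,2)$, the dilation $\varphi_2((s/t)\,\cdot\,)$ has Schwartz seminorms bounded independently of $s/t$, so $\Psi$ is even Schwartz with uniformly controlled seminorms, and applying (a) to $\Psi$ at scale $t$ gives exactly \eqref{eq2-lema1}. For (c), the vanishing hypothesis $\varphi_2^{(\nu)}(0)=0$ for $\nu=0,\dots,2\ell$ together with the evenness of $\varphi_2$ lets me factor $\varphi_2(\lambda)=\lambda^{2\ell}\psi_2(\lambda)$ with $\psi_2$ even Schwartz, whence $\varphi_2(s\sqrt{L})=(s\sqrt{L})^{2\ell}\psi_2(s\sqrt{L})=(s/t)^{2\ell}(t\sqrt{L})^{2\ell}\psi_2(s\sqrt{L})$ and therefore $\varphi_1(t\sqrt{L})\varphi_2(s\sqrt{L})=(s/t)^{2\ell}\,\tilde\varphi_1(t\sqrt{L})\psi_2(s\sqrt{L})$ with $\tilde\varphi_1(\lambda)=\lambda^{2\ell}\varphi_1(\lambda)$ even Schwartz. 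Writing $\tilde\varphi_1(t\sqrt{L})\psi_2(s\sqrt{L})=\Psi(t\sqrt{L})$ with $\Psi(\lambda)=\tilde\varphi_1(\lambda)\psi_2((s/t)\lambda)$, and noting that $s/t\le1$ keeps the seminorms of $\psi_2((s/t)\,\cdot\,)$ bounded while its decay at infinity is supplied by $\tilde\varphi_1$, part (a) applied to $\Psi$ at scale $t$ controls $K_{\Psi(t\sqrt{L})}$; multiplying by the prefactor $(s/t)^{2\ell}$ then yields \eqref{eq3-lema1}.
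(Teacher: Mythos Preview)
Your proof is correct and follows the standard route. The paper itself does not give a self-contained argument but refers to \cite{CD} for part (a) and to \cite{BDK} for parts (b) and (c); the arguments you outline---Fourier inversion plus finite propagation and a dyadic sum for (a), and for (b), (c) the reduction to (a) by writing the product as a single even Schwartz multiplier $\Psi(t\sqrt{L})$ whose seminorms are controlled uniformly in $s/t$---are precisely those of the cited references.
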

\begin{proof}
	\noindent (a) The estimate \eqref{eq1-lema1} was proved in \cite[Lemma 2.3]{CD} in the particular case $X=\mathbb{R}^n$ but the proof is still valid in the spaces of homogeneous type. For the items (b) and (c) we refer to \cite{BDK}.
\end{proof}

We record the following result in \cite{DKP}.
\begin{lem}
	\label{lem-DKP}
	Let $\varphi\in\mathscr{S}(\mathbb{R})$ be even function with $\varphi(0)=1$ and let $N>0$. Then there exist even functions $\phi,\psi\in \mathscr{S}(\mathbb{R})$ with $\phi(0)=1$ and $ \psi^{(\nu)}(0)=0, \nu=0,1,\ldots, N$ so that for every $f\in L^2(X)$ and every $j\in \mathbb{Z}$ we have
	$$
	f=\phi(2^{-j}\sqrt{L})\varphi(2^{-j}\sqrt{L})f+\sum_{k\geq j}\psi(2^{-k}\sqrt{L})[\varphi(2^{-k}\sqrt{L})-\varphi(2^{-k+1}\sqrt{L})]f \ \ \text{in $L^2(X)$}.
	$$
\end{lem}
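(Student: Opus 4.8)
The plan is to reduce the operator identity to a scalar identity for the functional calculus of $\sqrt L$ and then to build $\phi,\psi$ by a discrete Calder\'on normalization. By the spectral theorem it suffices to produce even $\phi,\psi\in\mathscr{S}(\mathbb{R})$ with the stated properties such that the scalar identity
\[
1=\phi(\lambda)\varphi(\lambda)+\sum_{k\ge 0}\psi(2^{-k}\lambda)\,\theta(2^{-k}\lambda),\qquad \theta(\lambda):=\varphi(\lambda)-\varphi(2\lambda),
\]
holds for every $\lambda\in[0,\infty)$: replacing $\lambda$ by $2^{-j}\lambda$ then gives the identity for every $j\in\mathbb{Z}$, and convergence in $L^2(X)$ follows from $\|f-S_{j,K}f\|_2^2=\int_{[0,\infty)}|1-m_{j,K}(\lambda)|^2\,d\langle E_{\sqrt L}(\lambda)f,f\rangle$ together with dominated convergence, once we know the scalar partial sums $m_{j,K}$ are uniformly bounded and converge pointwise to $1$. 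Thus the whole statement becomes scalar.

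First I would record the telescoping structure. Since $\varphi$ is even with $\varphi(0)=1$, the function $\theta$ is even, vanishes at the origin to some even order $\ge 2$, and $\sum_{k\ge 0}\theta(2^{-k}\lambda)=1-\varphi(2\lambda)$. Writing $S(\lambda)=\sum_{k\ge 0}\psi(2^{-k}\lambda)\theta(2^{-k}\lambda)$ and using $S(\lambda)-S(\lambda/2)=\psi(\lambda)\theta(\lambda)$, the target identity $S=1-\Phi$ with $\Phi:=\phi\varphi$ is equivalent to the single compatibility relation $\psi(\lambda)\theta(\lambda)=\Phi(\lambda/2)-\Phi(\lambda)$ together with $\Phi(0)=1$; conversely this relation plus the Schwartz decay of $\Phi$ reconstructs the identity by telescoping, since the partial sums equal $\Phi(2^{-K-1}\lambda)-\Phi(\lambda)\to 1-\Phi(\lambda)$. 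So everything reduces to exhibiting even Schwartz $\phi$ (with $\phi(0)=1$) and $\psi$ (vanishing to order $N$ at $0$) obeying this relation.

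The key device is to fix $\psi$ by a Calder\'on normalization that forces high-order vanishing. I would take an even, nonnegative $\eta\in\mathscr{S}(\mathbb{R})$ with $\eta(\lambda)>0$ for $\lambda\ne 0$ and vanishing to order $2m>N$ at the origin (e.g. $\eta(\lambda)=\lambda^{2m}e^{-\lambda^2}$), so that $\theta^2\eta\ge 0$ vanishes to order $>N$. The $\log$-periodic sum $P(\lambda):=\sum_{i\in\mathbb{Z}}\theta(2^{-i}\lambda)^2\eta(2^{-i}\lambda)$ is smooth, satisfies $P(2\lambda)=P(\lambda)$, and is bounded above and below by positive constants, being continuous and strictly positive on the compact period $[1,2]$ (using that $\theta\not\equiv 0$ has isolated zeros and $\eta>0$ off the origin). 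Setting $\psi:=\theta\eta/P$ then yields an even Schwartz function vanishing to order $>N$ at $0$ with the exact normalization $\sum_{i\in\mathbb{Z}}\psi(2^{-i}\lambda)\theta(2^{-i}\lambda)\equiv 1$. Defining $\Phi(\lambda):=1-\sum_{k\ge 0}\psi(2^{-k}\lambda)\theta(2^{-k}\lambda)$, the vanishing of $\theta$ at $0$ gives $\Phi(0)=1$, while the normalization forces $\Phi(\lambda)=\sum_{k\ge 1}(\psi\theta)(2^{k}\lambda)$, a rapidly decaying tail; hence $\Phi$ is even and Schwartz and satisfies the compatibility relation by construction.

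The remaining, and in my view hardest, step is to realize $\Phi$ in the required product form, i.e. to produce an even Schwartz $\phi$ with $\phi\varphi=\Phi$ and $\phi(0)=1$. Since $\Phi(0)=\varphi(0)=1$ the quotient is harmless near the origin, but when $\varphi$ has real zeros the forced relation $\psi\theta=\Phi(\cdot/2)-\Phi$ must make $\Phi$ vanish at exactly those zeros, to matching order, so that $\phi=\Phi/\varphi$ remains smooth and rapidly decreasing. Arranging this compatibility — equivalently, choosing the auxiliary data so that the low-frequency remainder factors through $\varphi$ — is the delicate part of the argument; it is carried out in \cite{DKP}, which I would invoke directly, and which one may bypass when $\varphi$ is nonvanishing since then $\phi:=\Phi/\varphi$ is visibly Schwartz. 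With $\phi$ in hand, the uniform boundedness and pointwise convergence of the $m_{j,K}$ needed for the $L^2(X)$ reduction follow from the Schwartz bounds on $\phi,\psi,\varphi$, completing the proof.
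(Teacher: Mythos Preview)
The paper does not prove this lemma; it simply records the statement from \cite{DKP}. So there is no argument in the paper to compare against, and your sketch already goes further than the paper does. That said, your construction has a concrete gap and, at the end, a circularity.

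\emph{The gap.} Your normalizer $P(\lambda)=\sum_{i\in\mathbb Z}(\theta^2\eta)(2^{-i}\lambda)$ does satisfy $P(2\lambda)=P(\lambda)$ and is continuous and bounded between positive constants on $(0,\infty)$ (your ``isolated zeros'' justification is not quite right, but the conclusion holds: if $\theta(2^{-i}\lambda_0)=0$ for every $i$ then $\varphi$ would be constant along $\{2^{-i}\lambda_0\}_{i\in\mathbb Z}$, forcing $1=\varphi(0)=\lim_{i\to-\infty}\varphi(2^{-i}\lambda_0)=0$). However, differentiating the dilation identity $k$ times gives $P^{(k)}(\lambda)=2^kP^{(k)}(2\lambda)$, so $\lambda^kP^{(k)}(\lambda)$ is itself log-periodic and hence generically $P^{(k)}(\lambda)\sim\lambda^{-k}$ as $\lambda\to 0^+$. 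If $\theta\eta$ vanishes to exact finite order $M_0$ at the origin --- as it does for your choice $\eta(\lambda)=\lambda^{2m}e^{-\lambda^2}$ --- write $\theta(\lambda)\eta(\lambda)=\lambda^{M_0}g(\lambda)$ with $g(0)\neq 0$; then the $M_0$-th derivative of $\lambda^{M_0}/P(\lambda)$ is a nonconstant $(\ln 2)$-periodic function of $\ln\lambda$, which oscillates without limit as $\lambda\to 0^+$. Thus $\psi=\theta\eta/P$ is not $C^{M_0}$ at $0$, hence not in $\mathscr{S}(\mathbb R)$, contrary to your claim. The standard remedy is to take $\eta\in C_c^\infty(\mathbb R\setminus\{0\})$ rather than $\lambda^{2m}e^{-\lambda^2}$; then $\psi$ is compactly supported away from the origin, smoothness at $0$ is automatic, and the vanishing $\psi^{(\nu)}(0)=0$ holds to all orders.

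\emph{The circularity.} For the factorization $\phi=\Phi/\varphi$ when $\varphi$ has real zeros you invoke \cite{DKP} directly --- but that is precisely the source of the lemma you are proving, so this step is not an independent argument. Your observation that the case of nonvanishing $\varphi$ is elementary is correct, and in fact suffices for every application of this lemma in the present paper, where one may take $\varphi(\lambda)=e^{-\lambda^2}$.
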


The following elementary estimate will be used frequently. Its proof is simple and we omit it.
\begin{lem}\label{lem-ele est}
	Let $\epsilon >0$. We have
	$$
	\int_X\f{1}{\mu(B(x,s))\wedge \mu(B(y,s))}\Big(1+\f{d(x,y)}{s}\Big)^{-n-\epsilon}|f(y)|\dy\lesi \mathcal{M}f(x).
	$$
	for all $x\in X$, $s>0$ where $\mathcal{M}f(x)$ is the Hardy-Littlewood maximal function of $f$.
\end{lem}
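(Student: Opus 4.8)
The plan is to decompose $X$ into the dyadic annuli centred at $x$, to estimate the contribution of each annulus by a fixed geometric factor times $\mathcal{M}f(x)$, and then to sum. Write $B=B(x,s)$ and split $X=\bigcup_{j\ge0}S_j(B)$, where $S_0(B)=2B$ and $S_j(B)=2^{j+1}B\setminus2^jB$ for $j\ge1$. On $S_j(B)$ one has $d(x,y)\sim 2^{j}s$, so the tail factor obeys $(1+d(x,y)/s)^{-n-\epsilon}\lesi 2^{-j(n+\epsilon)}$, which already supplies one power of $2^{-jn}$ to compensate volume growth together with the summable factor $2^{-j\epsilon}$.

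The key step is a lower bound for the denominator $\mu(B(x,s))\wedge\mu(B(y,s))$ on $S_j(B)$, and the crucial point is that it must be compared to the volume of the \emph{large} ball $B(x,2^{j+1}s)$ rather than to $\mu(B(x,s))$; comparing to the small ball produces a factor $2^{2jn}$ and a series that fails to converge unless $\epsilon>n$. For $y\in S_j(B)$ the triangle inequality gives $B(x,2^{j+1}s)\subset B(y,2^{j+2}s)$, whence by the doubling bound \eqref{doub2} applied with $\lambda=2^{j+2}$ one gets $\mu(B(x,2^{j+1}s))\le \mu(B(y,2^{j+2}s))\lesi 2^{jn}\mu(B(y,s))$. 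Likewise \eqref{doub2} with $\lambda=2^{j+1}$ gives $\mu(B(x,2^{j+1}s))\lesi 2^{jn}\mu(B(x,s))$. Since both quantities in the minimum dominate the same lower bound, combining the two yields
\[
\f{1}{\mu(B(x,s))\wedge\mu(B(y,s))}\lesi \f{2^{jn}}{\mu(B(x,2^{j+1}s))}\qquad\text{on }S_j(B).
\]

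With these two ingredients the annular integral becomes
\[
\int_{S_j(B)}\f{1}{\mu(B(x,s))\wedge\mu(B(y,s))}\Big(1+\f{d(x,y)}{s}\Big)^{-n-\epsilon}|f(y)|\dy \lesi 2^{-j(n+\epsilon)}\,\f{2^{jn}}{\mu(B(x,2^{j+1}s))}\int_{S_j(B)}|f(y)|\dy.
\]
Since $S_j(B)\subset B(x,2^{j+1}s)$, the remaining average is dominated by the Hardy--Littlewood maximal function, namely $\f{1}{\mu(B(x,2^{j+1}s))}\int_{S_j(B)}|f|\dy\le \mathcal{M}f(x)$, so the $j$th term is at most $C\,2^{-j\epsilon}\mathcal{M}f(x)$. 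Summing over $j$ and using $\sum_{j\ge0}2^{-j\epsilon}<\vc$, which holds precisely because $\epsilon>0$, gives the asserted pointwise domination. The only genuine subtlety is the choice of reference ball in the denominator estimate above; once that is in place, every remaining step is a direct application of the doubling property \eqref{doub2} and the definition of $\mathcal{M}$.
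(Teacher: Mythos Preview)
Your proof is correct; the paper itself omits the proof of this lemma as elementary, so there is no argument to compare against, but your dyadic-annuli decomposition with the key observation that $\mu(B(x,s))\wedge\mu(B(y,s))\gtrsim 2^{-jn}\mu(B(x,2^{j+1}s))$ on $S_j(B)$ is precisely the standard route one would expect.
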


Let $F$ be a measurable function on $X\times (0,\vc)$. For $\alpha>0$ we set 
$$
F^*_\alpha(x)=\sup_{0<t<d_X}\sup_{d(x,y)<\alpha t}|F(y,t)|.
$$
In the particular case $\alpha=1$, we write $F^*$ instead of $F^*_\alpha$.\\

We have the following result:
\begin{lem}
	\label{lem2} For any $p>0$, $w\in A_q$ and $0<\alpha_2\leq \alpha_1$, there exists $C$ depending on $n$ and $p$ so that
	\[
	\|F^*_{\alpha_1}\|_{L_w^p(X)}\leq C\Big(1+\f{2\alpha_1}{\alpha_2}\Big)^{nq/p}\|F^*_{\alpha_2}\|_{L_w^p(X)}.
	\]
\end{lem}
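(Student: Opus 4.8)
The plan is to run a level-set (good-$\lambda$ type) comparison that reduces the change of aperture to the weighted weak-type bound for the Hardy--Littlewood maximal operator $\mathcal{M}$, followed by the layer-cake formula to pass to $L^p_w$ norms.

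First I would establish a geometric inclusion of level sets. Fix $\lambda>0$ and suppose $F^*_{\alpha_1}(x)>\lambda$. By definition there exist $y\in X$ and $0<t<d_X$ with $d(x,y)<\alpha_1 t$ and $|F(y,t)|>\lambda$. For every $z\in B(y,\alpha_2 t)$ we have $d(z,y)<\alpha_2 t$ with the \emph{same} $t<d_X$, hence $F^*_{\alpha_2}(z)>\lambda$; that is, $B(y,\alpha_2 t)\su E_\lambda:=\{F^*_{\alpha_2}>\lambda\}$. Since $d(x,y)<\alpha_1 t$, the ball $B(y,\alpha_2 t)$ is contained in $B(x,(\alpha_1+\alpha_2)t)$, so
\[
\f{\mu(E_\lambda\cap B(x,(\alpha_1+\alpha_2)t))}{\mu(B(x,(\alpha_1+\alpha_2)t))}\ge \f{\mu(B(y,\alpha_2 t))}{\mu(B(x,(\alpha_1+\alpha_2)t))}.
\]
To bound the right-hand side from below I would use the inclusion $B(x,(\alpha_1+\alpha_2)t)\su B(y,(2\alpha_1+\alpha_2)t)$ (valid because $d(x,y)<\alpha_1 t$) together with the doubling estimate \eqref{doub2}, which gives $\mu(B(x,(\alpha_1+\alpha_2)t))\le C_2(1+2\alpha_1/\alpha_2)^n\mu(B(y,\alpha_2 t))$. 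Consequently the averaged ratio above is at least $c_0:=C_2^{-1}(1+2\alpha_1/\alpha_2)^{-n}$, whence $\mathcal{M}(\Ind{E_\lambda})(x)\ge c_0$. This yields the inclusion $\{F^*_{\alpha_1}>\lambda\}\su\{\mathcal{M}(\Ind{E_\lambda})>c_0/2\}$.

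Next I would invoke the weighted weak-type $(q,q)$ estimate for $\mathcal{M}$, which holds since $w\in A_q$: applied to $\Ind{E_\lambda}$ at height $c_0/2$ it gives $w(\{\mathcal{M}(\Ind{E_\lambda})>c_0/2\})\lesi c_0^{-q}w(E_\lambda)$. Combining this with the level-set inclusion and recalling $c_0^{-q}\sim(1+2\alpha_1/\alpha_2)^{nq}$ produces the key distributional bound
\[
w(\{F^*_{\alpha_1}>\lambda\})\lesi \Big(1+\f{2\alpha_1}{\alpha_2}\Big)^{nq}\, w(\{F^*_{\alpha_2}>\lambda\})
\]
uniformly in $\lambda>0$. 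Finally I would pass from distribution functions to norms via the layer-cake formula $\|G\|_{L^p_w(X)}^p=p\int_0^\vc\lambda^{p-1}w(\{G>\lambda\})\,d\lambda$, integrating the displayed bound in $\lambda$ and taking $p$-th roots; this yields precisely the factor $(1+2\alpha_1/\alpha_2)^{nq/p}$ claimed.

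The main obstacle is the geometric step: one must track the doubling constant carefully through the two ball inclusions so that the resulting aperture factor is exactly $(1+2\alpha_1/\alpha_2)^n$ rather than a cruder power, since it is this sharp power that — after the weak-type $(q,q)$ bound contributes the exponent $q$ and the layer-cake integration contributes the $1/p$ — feeds into the final exponent $nq/p$. Everything else is a routine combination of doubling and Muckenhoupt's theorem.
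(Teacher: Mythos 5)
Your proof is correct and follows essentially the same route as the paper: a level-set density comparison showing that each point of $\{F^*_{\alpha_1}>\lambda\}$ sees $\{F^*_{\alpha_2}>\lambda\}$ with density at least $c(1+2\alpha_1/\alpha_2)^{-n}$ in a suitable ball, followed by a maximal-function weak-type bound and the layer-cake formula. The only cosmetic difference is that you invoke the weighted weak-type $(q,q)$ inequality for the unweighted maximal operator $\mathcal{M}$, whereas the paper first converts the $\mu$-density to a $w$-density via the $A_q$ condition and then applies the weak $(1,1)$ bound for $\mathcal{M}_w$; both steps use $A_q$ in the same way and yield the same factor $(1+2\alpha_1/\alpha_2)^{nq}$.
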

\begin{proof}
	The proof of this lemma is  similar to that of \cite[Theorem 2.3]{CT}, hence we just sketch the main ideas.
	
	Set 
	$$
	E_1=\{x\in X: F^*_{\alpha_1}(x)>\lambda\}, \ \ \ \text{and} \ \ \ E_2=\{x\in X: F^*_{\alpha_2}(x)>\lambda\}.
	$$
	Then if $x_0\in E_1$, arguing similarly to the proof of \cite[Theorem 2.3]{CT} we can find $0<t_0<d_X$ so that
	\[
	\f{\mu(B(x_0,(\alpha_1+\alpha_2)t_0)\cap E_2)}{V(x_0,(\alpha_1+\alpha_2)t_0)}\ge  C\Big(1+\f{2\alpha_1}{\alpha_2}\Big)^{-n}.
	\]
	This implies
	\[
		\f{w(B(x_0,(\alpha_1+\alpha_2)t_0)\cap E_2)}{w(B(x_0,(\alpha_1+\alpha_2)t_0))}\ge  C_0\Big(1+\f{2\alpha_1}{\alpha_2}\Big)^{-nq}.
	\]
	As a consequence, we have
	\[
	E_1\subset \left\{ x\in X: \mathcal{M}_w(\chi_{E_2})(x)>C_0\Big(1+\f{2\alpha_1}{\alpha_2}\Big)^{-nq} \right\}.
	\] 
	This, along with the weak type $(1,1)$ of the maximal function $\mathcal{M}_w$, yields
	\[
	w(E_1)\le C_1\Big(1+\f{2\alpha_1}{\alpha_2}\Big)^{nq}w(E_2),
	\]
	or equivalently,
	\begin{equation*}
	\label{ E1 E2}
	w\{x\in X: F^*_{\alpha_1}(x)>\lambda\}\le C_1\Big(1+\f{2\alpha_1}{\alpha_2}\Big)^{nq} w\{x\in X: F^*_{\alpha_2}(x)>\lambda\}.
	\end{equation*}
	Therefore, 
	\[
	\begin{aligned}
	\|F^*_{\alpha_1}\|^p_{L_w^p(X)}&=p\int_0^p\lambda^{p-1}w\{x\in X: F^*_{\alpha_1}(x)>\lambda\}\,d\lambda\\
	&\le pC_1\Big(1+\f{2\alpha_1}{\alpha_2}\Big)^{nq} \int_0^p\lambda^{p-1}w\{x\in X: F^*_{\alpha_2}(x)>\lambda\}\,d\lambda\\
	&\le pC_1\Big(1+\f{2\alpha_1}{\alpha_2}\Big)^{nq}\|F^*_{\alpha_2}\|^p_{L_w^p(X)}.
	\end{aligned}
	\]
	This completes our proof.
\end{proof}
From the lemmas above we obtain the following result.
\begin{lem}
	\label{lem3} For any $p\in (0,1]$, $w\in A_q(X)$ and $\lambda>nq/p$, there exists $C$ depending on $n$ and $p$ so that
	\[
	\Big\|\sup_{0<t<d_X}\sup_{y}F(y,t)\Big(1+\f{d(x,y)}{t}\Big)^{-\lambda}\Big\|_{L_w^p(X)}\leq C\|F^*\|_{L_w^p(X)}.
	\]
\end{lem}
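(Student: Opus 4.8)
The plan is to reduce the estimate to Lemma~\ref{lem2} by a dyadic decomposition of the inner supremum over $y$ according to the size of $d(x,y)/t$, and then to exploit that the hypothesis $\lambda>nq/p$ forces a geometric series to converge. Write $G(x)$ for the quantity whose $L^p_w$-norm we wish to control,
\[
G(x)=\sup_{0<t<d_X}\sup_{y}|F(y,t)|\Big(1+\f{d(x,y)}{t}\Big)^{-\lambda}.
\]
First I would split the supremum over $y$ into the dyadic annuli $A_k(t)=\{y:2^{k-1}t\le d(x,y)<2^kt\}$ for $k\ge 1$, together with $A_0(t)=\{y:d(x,y)<t\}$. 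On $A_k(t)$ one has $d(x,y)<2^kt$, so $|F(y,t)|\le F^*_{2^k}(x)$ by the very definition of the aperture-$2^k$ maximal function, while $(1+d(x,y)/t)^{-\lambda}\lesi 2^{-k\lambda}$. Since the resulting bound $2^{-k\lambda}F^*_{2^k}(x)$ no longer depends on $t$, taking the supremum over $t$ and summing over the annuli yields the pointwise estimate
\[
G(x)\lesi \sum_{k=0}^{\infty}2^{-k\lambda}F^*_{2^k}(x).
\]

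Next, because $0<p\le 1$, the functional $\|\cdot\|^p_{L_w^p(X)}$ is subadditive, so
\[
\|G\|^p_{L_w^p(X)}\lesi \sum_{k=0}^\infty 2^{-k\lambda p}\,\|F^*_{2^k}\|^p_{L_w^p(X)}.
\]
I would then invoke Lemma~\ref{lem2} with $\alpha_1=2^k$ and $\alpha_2=1$, which gives $\|F^*_{2^k}\|_{L_w^p(X)}\lesi (1+2\cdot2^k)^{nq/p}\|F^*\|_{L_w^p(X)}\lesi 2^{knq/p}\|F^*\|_{L_w^p(X)}$, and hence $\|F^*_{2^k}\|^p_{L_w^p(X)}\lesi 2^{knq}\|F^*\|^p_{L_w^p(X)}$. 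Substituting, the bound becomes
\[
\|G\|^p_{L_w^p(X)}\lesi \Big(\sum_{k=0}^\infty 2^{k(nq-\lambda p)}\Big)\|F^*\|^p_{L_w^p(X)}.
\]

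The only delicate point — and really the crux of the lemma — is the convergence of this final geometric series: the decay $2^{-k\lambda p}$ inherited from the weight $(1+d(x,y)/t)^{-\lambda}$ must dominate the growth $2^{knq}$ produced by enlarging the aperture in Lemma~\ref{lem2}. These two effects balance exactly when $nq-\lambda p<0$, that is, precisely under the assumption $\lambda>nq/p$; in that regime the series sums to a finite constant depending only on $n$, $q$, $p$ and $\lambda$, which completes the proof. I do not anticipate any other obstacle, since all remaining steps are the routine dyadic splitting and the $p$-triangle inequality for the quasi-norm.
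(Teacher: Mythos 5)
Your argument is correct and is exactly the route the paper intends: Lemma \ref{lem3} is stated there without proof as a consequence of Lemma \ref{lem2}, and your dyadic decomposition of the supremum in $y$ according to the size of $d(x,y)/t$, combined with the aperture estimate $\|F^*_{2^k}\|_{L^p_w(X)}\lesssim 2^{knq/p}\|F^*\|_{L^p_w(X)}$ from Lemma \ref{lem2} and the $p$-subadditivity of $\|\cdot\|^p_{L^p_w(X)}$, is precisely the standard way to fill in that step. The geometric series converges exactly under the stated hypothesis $\lambda>nq/p$, so there is no gap.
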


For any even function  $\varphi \in \mathscr{S}(\mathbb{R})$, $\alpha>0$ and $f\in L^2(X)$ we define
$$
\varphi^*_{L,\alpha}(f)(x)=\sup_{0<t<d_X}\sup_{d(x,y)<\alpha t}|\varphi(t\sqrt{L})f(y)|,
$$ 
and 
$$
\varphi^+_{L,\alpha}(f)(x)=\sup_{0<t<d_X}|\varphi(t\sqrt{L})f(x)|.
$$
As usual, we drop the index $\alpha$ when $\alpha=1$.

We now are in position to prove the following estimate.

\begin{prop}
	\label{prop1}
	Let $p\in (0,1]$ and $w\in A_\vc(X)$. Let $\varphi_1, \varphi_2\in \mathscr{\mathbb{R}}$ be even functions with $\varphi_1(0)=1$ and $\varphi_2(0)=0$ and $\alpha_1, \alpha_2>0$. Then for every $f\in L^2(X)$ we have
	\begin{equation}
	\label{eq1-prop1}
	\|(\varphi_2)^*_{L,\alpha_2}f\|_{L_w^p(X)}\lesi \|(\varphi_1)^*_{L,\alpha_1}f\|_{L_w^p(X)}.
	\end{equation}
	As a consequence, for every even function $\varphi$ with $\varphi(0)=1$ and $\alpha>0$ we have
	\begin{equation}
	\label{eq2-prop1}
	\|\varphi^*_{L,\alpha}f\|_{L_w^p(X)}\sim \|f^*_{L}\|_{L_w^p(X)}.
	\end{equation}
\end{prop}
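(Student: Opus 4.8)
The plan is to prove the estimate \eqref{eq1-prop1} by dominating $(\varphi_2)^*_{L,\alpha_2}f$ pointwise by a Peetre-type maximal function built from $\varphi_1$, and then to pass to the $L^p_w$ norm using Lemma \ref{lem3}. Since $w\in A_\vc(X)$, fix $q$ with $w\in A_q(X)$ and choose an exponent $\lambda$ with $nq/p<\lambda$. Introduce
\[
G(x)=\sup_{0<s<d_X}\sup_{z\in X}|\varphi_1(s\sqrt{L})f(z)|\Big(1+\frac{d(x,z)}{s}\Big)^{-\lambda}.
\]
Applying Lemma \ref{lem3} to $F(z,s)=|\varphi_1(s\sqrt{L})f(z)|$ gives $\|G\|_{L^p_w}\lesssim \|(\varphi_1)^*_{L,1}f\|_{L^p_w}$, and Lemma \ref{lem2} then converts aperture $1$ into $\alpha_1$, so that $\|G\|_{L^p_w}\lesssim \|(\varphi_1)^*_{L,\alpha_1}f\|_{L^p_w}$. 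Hence \eqref{eq1-prop1} reduces to the pointwise inequality $(\varphi_2)^*_{L,\alpha_2}f(x)\lesssim G(x)$.

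For this pointwise bound I would apply Lemma \ref{lem-DKP} to $\varphi_1$ (legitimate because $\varphi_1(0)=1$), producing even Schwartz functions $\phi,\psi$ with $\phi(0)=1$ and $\psi^{(\nu)}(0)=0$ for $\nu\le N$, together with the reproducing identity. Fix $x$; for $t$ with $d(x,y)<\alpha_2 t$, choose $j$ with $2^{-j-1}<t\le 2^{-j}$ and apply $\varphi_2(t\sqrt{L})$ to the identity, getting $\varphi_2(t\sqrt{L})\phi(2^{-j}\sqrt{L})[\varphi_1(2^{-j}\sqrt{L})f]$ plus the sum over $k\ge j$ of $\varphi_2(t\sqrt{L})\psi(2^{-k}\sqrt{L})$ applied to $(\varphi_1(2^{-k}\sqrt{L})-\varphi_1(2^{-k+1}\sqrt{L}))f$. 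The first term has comparable scales and is controlled by Lemma \ref{lem1}(b); each summand has $2^{-k}\le t$, so Lemma \ref{lem1}(c) supplies the decay $(2^{-k}/t)^{2\ell}\sim 2^{-(k-j)2\ell}$. Inserting $|\varphi_1(2^{-k}\sqrt{L})f(z)|\le G(x)(1+d(x,z)/2^{-k})^{\lambda}$ and using $1+d(x,z)/2^{-k}\lesssim (1+\alpha_2)\,2^{k-j}(1+d(y,z)/t)$ turns each kernel integral into $G(x)$ times $2^{(k-j)\lambda}$ times a bounded integral $\int \mu(B(y,t))^{-1}(1+d(y,z)/t)^{-n-(N-\lambda)}\,dz\lesssim 1$. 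The net weight of the $k$-th term is $2^{-(k-j)(2\ell-\lambda)}$. Choosing $N\ge 2\ell>\lambda$ makes the geometric series converge, yielding $(\varphi_2)^*_{L,\alpha_2}f(x)\lesssim G(x)$, and hence \eqref{eq1-prop1}. Notice that this argument uses only $\varphi_1(0)=1$; the hypothesis $\varphi_2(0)=0$ is not actually needed for this direction.

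To deduce \eqref{eq2-prop1}, write $\Psi(\xi)=e^{-\xi^2}$, so that $e^{-tL}=\Psi(\sqrt{t}\,\sqrt{L})$ and, after the reparametrization $s=\sqrt{t}$ together with an aperture adjustment via Lemma \ref{lem2}, $\|f^*_L\|_{L^p_w}\sim \|\Psi^*_{L,\alpha}f\|_{L^p_w}$. Given any even $\varphi$ with $\varphi(0)=1$, split $\varphi=\Psi+(\varphi-\Psi)$, where $(\varphi-\Psi)(0)=0$; applying \eqref{eq1-prop1} with $\varphi_1=\Psi$ and $\varphi_2=\varphi-\Psi$ gives $\|\varphi^*_{L,\alpha}f\|_{L^p_w}\lesssim \|\Psi^*_{L}f\|_{L^p_w}$. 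Symmetrically, $\Psi=\varphi+(\Psi-\varphi)$ with $(\Psi-\varphi)(0)=0$ and \eqref{eq1-prop1} (now with $\varphi_1=\varphi$) give the reverse inequality, so that $\|\varphi^*_{L,\alpha}f\|_{L^p_w}\sim \|\Psi^*_{L}f\|_{L^p_w}\sim \|f^*_L\|_{L^p_w}$. Each use of \eqref{eq1-prop1} is legitimate because one of the two functions equals $1$ at the origin and the other vanishes there.

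The main obstacle I anticipate is the simultaneous balancing of exponents $nq/p<\lambda<2\ell\le N$: one must carry enough vanishing moments (the order $2\ell$ in Lemma \ref{lem1}(c), controlled by $N$ in Lemma \ref{lem-DKP}) to defeat the growth $2^{(k-j)\lambda}$ produced by the Peetre exponent, while keeping $\lambda>nq/p$ so that Lemma \ref{lem3} applies. This chain of constraints is exactly why the range $p\le 1$ forces the Peetre-maximal route rather than a naive domination by the Hardy--Littlewood maximal function $\mathcal{M}$: since $\mathcal{M}$ is not bounded on $L^p_w$ for $p\le 1$, the off-diagonal decay must be absorbed into $G$ \emph{before} taking the $L^p_w$ norm, which is precisely the content of Lemma \ref{lem3}.
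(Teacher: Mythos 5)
Your proposal is correct and follows essentially the same route as the paper: reduce to a fixed aperture via Lemma \ref{lem2}, use the Calder\'on-type reproducing identity of Lemma \ref{lem-DKP} applied to $\varphi_1$ together with the kernel bounds of Lemma \ref{lem1}(b),(c) to dominate $(\varphi_2)^*_{L,\alpha_2}f$ pointwise by the Peetre maximal function of $\varphi_1$, and then invoke Lemma \ref{lem3}; the deduction of \eqref{eq2-prop1} by splitting off the Gaussian $e^{-\xi^2}$ is also the paper's argument. Your side remarks (that $\varphi_2(0)=0$ is not actually used in \eqref{eq1-prop1}, and that a reparametrization $s=\sqrt{t}$ is needed to match $f^*_L$ with $\Psi^*_{L}f$) are accurate and, if anything, slightly more careful than the paper's own write-up.
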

\begin{proof} From Lemma \ref{lem2} it suffices to prove the proposition with $\alpha_1=\alpha_2=1$. 
	
	Fix  $N>n$ and $\lambda>nq_w/p$ and $M>\lambda/2$. Fix $t\in (0,d_X)$ and let $j_0\in \mathbb{Z}^+$ so that $2^{-j_0+1}\leq t<2^{-j_0+2}$. According to Lemma \ref{lem-DKP} there exist even functions $\phi,\psi\in \mathscr{\mathbb{R}}$ with $\phi(0)=1$ and $\psi^{(\nu)}(0)=0$ for $\nu=0,1,\ldots,2M$  so that
	\[
	f=\phi(2^{-j_0}\sqrt{L})\varphi_1(2^{-j_0}\sqrt{L})f+\sum_{k\geq j_0}\psi(2^{-k}\sqrt{L})[\varphi_1(2^{-k}\sqrt{L})-\varphi_1(2^{-k+1}\sqrt{L})]
	\]
	which implies
	\begin{equation}
	\label{eq1-prop1-phi12}
	\begin{aligned}
	\varphi_2(t\sqrt{L})f(y) &=\varphi_2(t\sqrt{L})\phi(2^{-j_0}\sqrt{L})\varphi_1(2^{-j_0}\sqrt{L})f(y)\\
	&\qquad +\sum_{k\geq j_0}\varphi_2(t\sqrt{L})\psi(2^{-k}\sqrt{L})[\varphi_1(2^{-k}\sqrt{L})-\varphi_1(2^{-k+1}\sqrt{L})]f(y)\\
	&=:I_1(y,t)+I_2(y,t).
	\end{aligned}
		\end{equation}
	Since $2^{-j_0}\sim t$, by Lemma \ref{lem1} we have
	\[
	\begin{aligned}
	|I_1(y,t)|\lesi  \int_X  \f{1}{V(z,t)}\Big(1+\f{d(y,z)}{t}\Big)^{-\lambda-N} |\varphi_1(t\sqrt{L})f(z)|d\mu(z).
	\end{aligned}
	\]
	This implies that for a fixed $x\in X$ and $0<t<d_X$ we have
	\[
	\begin{aligned}
	\sup_{d(x,y)<t}|I_1(y,t)|&\lesi \int_X  \f{1}{V(z,t)}\Big(1+\f{d(x,z)}{t}\Big)^{-\lambda-N} |\varphi_1(t\sqrt{L})f(z)|d\mu(z).
	\end{aligned}
	\]
	Applying the estimate in Lemma \ref{lem-ele est} we have
	\begin{equation}
	\label{eq2-prop1-phi12}
	\begin{aligned}
	\sup_{d(x,y)<t}|I_1(y,t)|
	&\lesi  \sup_{z\in X}\Big(1+\f{d(x,z)}{t}\Big)^{-\lambda} |\varphi_1(t\sqrt{L})f(z)|.
	\end{aligned}
	\end{equation}
	
	For the second term $I_2$, using  \eqref{eq3-lema1} and the fact that $t\sim 2^{-j_0}$ we have
	\[
	\begin{aligned}
	|I_2(y,t)|\lesi  \ & \sum_{k\ge j_0} \int_X 2^{-2M(k-j_0)}\f{1}{V(z,2^{-j_0})}\Big(1+\f{d(y,z)}{2^{-j_0}}\Big)^{-\lambda-N} |\varphi_1(2^{-k}\sqrt{L})f(z)|d\mu(z)\\
	& +\sum_{k\ge j_0} \int_X 2^{-2M(k-j_0)}\f{1}{V(z,2^{-j_0})}\Big(1+\f{d(y,z)}{2^{-j_0}}\Big)^{-\lambda-N} |\varphi_1(2^{-k+1}\sqrt{L})f(z)|d\mu(z)\\
	\end{aligned}
	\]
	Hence, for a fixed $x\in X$ and $0<t<d_X$ we have
	\begin{equation*}
	\begin{aligned}
	\sup_{d(x,y)<t}&|I_2(y,t)|\\
	\lesi  \ & \sum_{k\ge j_0} \int_X 2^{-(2M-\lambda)(k-j_0)}\f{1}{V(z,2^{-j_0})}\Big(1+\f{d(x,z)}{2^{-j_0}}\Big)^{-N}\Big(1+\f{d(x,z)}{2^{-k}}\Big)^{-\lambda} |\varphi_1(2^{-k}\sqrt{L})f(z)|d\mu(z)\\
	& +\sum_{k\ge j_0} \int_X 2^{-2M(k-j_0)}\f{1}{V(z,2^{-j_0})}\Big(1+\f{d(x,z)}{2^{-j_0}}\Big)^{-N}\Big(1+\f{d(x,z)}{2^{-k+1}}\Big)^{-\lambda} |\varphi_1(2^{-k+1}\sqrt{L})f(z)|d\mu(z).
	\end{aligned}
	\end{equation*}
    Note that for $k\ge j_0$ we have $2^{-k+1}\le t <d_X$. This, along with Lemma \ref{lem-ele est} and above inequality, implies
	\begin{equation}
	\label{eq3-prop1-phi12}
	\begin{aligned}
	\sup_{d(x,y)<t}|I_2(y,t)|&\lesi  \sup_{{0<s<d_X}}\sup_{z\in X}\Big(1+\f{d(x,z)}{s}\Big)^{-\lambda} |\varphi_1(s\sqrt{L})f(z)|.
	\end{aligned}
	\end{equation}
	Taking this, \eqref{eq2-prop1-phi12} and \eqref{eq1-prop1-phi12} into account we conclude that
	\[
	\sup_{0<t<d_X}\sup_{d(x,y)<t}|\varphi_2(t\sqrt{L})f(y)|\lesi  \sup_{{0<s<d_X}}\sup_{z\in X}\Big(1+\f{d(x,z)}{s}\Big)^{-\lambda} |\varphi_1(s\sqrt{L})f(z)|.
	\]
	Then applying Lemma \ref{lem3}, \eqref{eq1-prop1} follows directly.
	
	To prove \eqref{eq2-prop1}, we apply \eqref{eq1-prop1} for $\varphi_1(\lambda)=\varphi(\lambda)-e^{-\lambda^2}$, $\varphi_2(\lambda)=e^{-\lambda^2}$, $\alpha_1=
	\alpha$ and $\alpha_2=1$ to obtain
	$$
	\Big\|\sup_{t>0}\sup_{d(x,y)<\alpha t}|\varphi(t\sqrt{L})f(y)-e^{-t^2L}f(y)|\Big\|_{L^p_w(X)}\lesi \|f^*_{L}\|_{L^p_w(X)}.
	$$
	This, along with Lemma \ref{lem2}, yields 
	$$
	\|\varphi^*_{L,\alpha}f\|_{L^p_w(X)}\lesi \|f^*_{L}\|_{L^p_w(X)}.
	$$
	Similarly, we obtain
	$$
	\|f^*_{L}\|_{L^p_w(X)}\lesi \|\varphi^*_{L,\alpha}f\|_{L^p_w(X)}.
	$$
	This proves \eqref{eq2-prop1}.
\end{proof}

For each $\lambda>0$ and each even function $\varphi\in\mathscr{S}(\mathbb{R})$ we define
$$
M^*_{L,\varphi,\lambda}f(x)=\sup_{t>0}\sup_{y\in X}\f{|\varphi(t\sqrt{L})f(y)|}{\Big(1+\f{d(x,y)}{t}\Big)^\lambda},
$$
for each $f\in L^2(X)$.

Obviously, we have $\varphi^*_L f(x)\leq M^*_{L,\varphi,\lambda}f(x)$ for all $x\in X, \lambda>0$ and even functions $\varphi\in\mathscr{S}(\mathbb{R})$.

\begin{prop}
	\label{prop2}
	Let $p\in(0,1]$ and $w\in A_\vc(X)$. Let $\varphi\in\mathscr{S}(\mathbb{R})$ be an even function with $\varphi(0)=1$. Then we have, for every $f\in L^2(X)$,
	\begin{equation}\label{eq-petree}
	\Big\|M^*_{L,\varphi,\lambda}f\Big\|_{L_w^p(X)}\lesi \|\varphi^+_{L}f\|_{L_w^p(X)},
	\end{equation}
	provided $\lambda>nq_w/p$.
	
	As a consequence, we have
	\begin{equation*}
	\Big\|\varphi^*_{L}f\Big\|_{L_w^p(X)}\lesi \|\varphi^+_{L}f\|_{L_w^p(X)}.
	\end{equation*}
\end{prop}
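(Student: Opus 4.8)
The plan is to establish the pointwise majorization
\[
M^*_{L,\varphi,\lambda}f(x)\lesi \big[\mathcal{M}\big((\varphi^+_{L}f)^r\big)(x)\big]^{1/r}
\]
for a fixed exponent $r$ with $n/\lambda<r<p/q_w$; such an $r$ exists \emph{precisely} because $\lambda>nq_w/p$. Granting this, \eqref{eq-petree} follows at once: since $p/r>q_w$ we have $w\in A_{p/r}(X)$, so $\mathcal{M}$ is bounded on $L^{p/r}_w(X)$, and hence
\[
\|M^*_{L,\varphi,\lambda}f\|_{L^p_w}^p\lesi\int_X\big[\mathcal{M}\big((\varphi^+_Lf)^r\big)\big]^{p/r}w\,d\mu=\|\mathcal{M}((\varphi^+_Lf)^r)\|_{L^{p/r}_w}^{p/r}\lesi\|\varphi^+_Lf\|_{L^p_w}^p.
\]
The stated consequence is then immediate, because $(1+d(x,y)/t)^\lambda\le 2^\lambda$ when $d(x,y)<t$ gives $\varphi^*_Lf\le 2^\lambda M^*_{L,\varphi,\lambda}f$ pointwise.

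To prove the pointwise bound I would fix $t>0$, choose $j_0\in\ZZ$ with $2^{-j_0+1}\le t<2^{-j_0+2}$, and insert the reproducing formula of Lemma \ref{lem-DKP} (applied to $\varphi$, with vanishing order $N$ taken large), writing $\varphi(t\sqrt{L})f(y)$ as $\varphi(t\sqrt{L})\phi(2^{-j_0}\sqrt{L})[\varphi(2^{-j_0}\sqrt{L})f](y)$ plus $\sum_{k\ge j_0}\varphi(t\sqrt{L})\psi(2^{-k}\sqrt{L})g_k(y)$, where $g_k=[\varphi(2^{-k}\sqrt{L})-\varphi(2^{-k+1}\sqrt{L})]f$. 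Since $t\sim 2^{-j_0}\ge 2^{-k}$, Lemma \ref{lem1}(b),(c) bound the kernels of these composite operators by $2^{-2\ell(k-j_0)}V(y,t)^{-1}(1+d(y,z)/t)^{-n-N}$ for any chosen $2\ell\le N$. In each resulting integral I split $|g_k(z)|=|g_k(z)|^{1-r}|g_k(z)|^r$. For the first factor the definition of $M^*_{L,\varphi,\lambda}$ at scale $2^{-k}$ yields $|g_k(z)|^{1-r}\lesi[M^*_{L,\varphi,\lambda}f(x)]^{1-r}(1+d(x,z)/2^{-k})^{\lambda(1-r)}$, and since $2^{-k}\lesi t$ this is $\lesi 2^{(k-j_0)\lambda(1-r)}[M^*_{L,\varphi,\lambda}f(x)]^{1-r}(1+d(x,y)/t)^{\lambda(1-r)}(1+d(y,z)/t)^{\lambda(1-r)}$; for the second I simply use $|g_k(z)|^r\lesi(\varphi^+_Lf(z))^r$. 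Choosing $2\ell>\lambda(1-r)$ makes $\sum_k 2^{-2\ell(k-j_0)}2^{(k-j_0)\lambda(1-r)}$ finite, and $N>\lambda(1-r)$ leaves an integrable tail. The main term is treated identically via Lemma \ref{lem1}(b). Collecting everything gives
\[
\f{|\varphi(t\sqrt{L})f(y)|}{(1+d(x,y)/t)^\lambda}\lesi[M^*_{L,\varphi,\lambda}f(x)]^{1-r}(1+d(x,y)/t)^{-\lambda r}A_t(y),
\]
where $A_t(y)=\int_X V(y,t)^{-1}(1+d(y,z)/t)^{-(n+N-\lambda(1-r))}(\varphi^+_Lf(z))^r\,d\mu(z)$ is a genuine scale-$t$ average.

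The decisive point — and the step I expect to be the main obstacle — is to \emph{retain} the scale $t$ in $A_t(y)$ rather than crudely dominating it by $\mathcal{M}((\varphi^+_Lf)^r)(y)$, which would destroy the localization (the unrestricted supremum over $t$ would trivialize the factor $(1+d(x,y)/t)^{-\lambda r}$). Decomposing $A_t(y)$ into the annuli $d(y,z)\sim 2^jt$ and using the sharp doubling estimate $\fint_{B(y,2^jt)}(\varphi^+_Lf)^r\lesi(1+d(x,y)/t)^n\,\mathcal{M}((\varphi^+_Lf)^r)(x)$ — which holds because $B(y,2^jt)\subset B(x,2^jt(1+d(x,y)/t))$ together with \eqref{doub2}--\eqref{doub2s} — the convergence $N>\lambda(1-r)$ gives $A_t(y)\lesi(1+d(x,y)/t)^n\mathcal{M}((\varphi^+_Lf)^r)(x)$. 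Hence $(1+d(x,y)/t)^{-\lambda r}A_t(y)\lesi(1+d(x,y)/t)^{n-\lambda r}\mathcal{M}((\varphi^+_Lf)^r)(x)$, where $n-\lambda r\le 0$ exactly because $r>n/\lambda$. Taking the supremum over $t>0$ and $y\in X$ produces the self-improving inequality $M^*_{L,\varphi,\lambda}f(x)\lesi[M^*_{L,\varphi,\lambda}f(x)]^{1-r}\mathcal{M}((\varphi^+_Lf)^r)(x)$. Provided $M^*_{L,\varphi,\lambda}f(x)$ is finite — which I would first guarantee by truncating the supremum to $t\in(\varepsilon,1/\varepsilon)$ and exploiting $f\in L^2(X)$ together with the Gaussian bound \eqref{GE}, and then letting $\varepsilon\to 0$ — one divides by $[M^*_{L,\varphi,\lambda}f(x)]^{1-r}$ to obtain the pointwise bound. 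Thus $\lambda>nq_w/p$ serves precisely to open the window $n/\lambda<r<p/q_w$ that simultaneously lets the Peetre weight absorb the doubling growth ($\lambda r\ge n$) and keeps $\mathcal{M}$ bounded on $L^{p/r}_w$ ($p/r>q_w$).
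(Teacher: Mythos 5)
Your proposal is correct and follows essentially the same route as the paper: the reproducing formula of Lemma \ref{lem-DKP} combined with the kernel bounds of Lemma \ref{lem1}, the splitting of $|\varphi(t\sqrt{L})f(z)|$ into a $(1-r)$-power controlled by the Peetre maximal function and an $r$-power fed into the Hardy--Littlewood maximal function (the paper calls your $r$ its $\theta$, with the identical constraints $n/\lambda<\theta<p/q_w$), leading to the same self-improving inequality and division step, and finally the $A_{p/r}$-boundedness of $\mathcal{M}$. Your treatment is in fact slightly more careful than the paper's on two points it glosses over --- the justification for dividing by $[M^*_{L,\varphi,\lambda}f(x)]^{1-r}$ via a truncated supremum, and the explicit recentering of the scale-$t$ average from $y$ to $x$ at the cost of $(1+d(x,y)/t)^n$ --- but these are refinements of the same argument, not a different one.
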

\begin{proof}
	Fix  $N>n$ and $\lambda>nq_w/p$ and $M>\lambda/2$. Fix $\theta \in (0,p)$ so that $p/\theta> q_w$ and $\lambda >n/\theta$. We now rewrite \eqref{eq1-prop1-phi12}:
	\begin{equation*}
		\begin{aligned}
	\varphi_2(t\sqrt{L})f(y) &=\varphi_2(t\sqrt{L})\phi(2^{-j_0}\sqrt{L})\varphi_1(2^{-j_0}\sqrt{L})f(y)\\
	&\qquad +\sum_{k\geq j_0}\varphi_2(t\sqrt{L})\psi(2^{-k}\sqrt{L})[\varphi_1(2^{-k}\sqrt{L})-\varphi_1(2^{-k+1}\sqrt{L})]f(y)\\
	&=:I_1(y,t)+I_2(y,t).
	\end{aligned}
	\end{equation*}
	Arguing similarly to \eqref{eq2-prop1-phi12} we have, for a fixed $x\in X$, all $y\in X$ and $t>0$,
	\begin{equation}
	\begin{aligned}
	\Big(1+&\f{d(x,y)}{t}\Big)^{-\lambda}|I_1(y,t)|\\
	&\lesi  \int_X\f{1}{V(z,t)}\Big(1+\f{d(y,z)}{t}\Big)^{-\lambda}\Big(1+\f{d(x,y)}{t}\Big)^{-\lambda} |\varphi(t\sqrt{L})f(z)|d\mu(z) \\
	&\lesi  \int_X \f{1}{V(z,t)} \Big(1+\f{d(x,z)}{t}\Big)^{-\lambda}|\varphi(t\sqrt{L})f(z)|d\mu(z)\\
	&\lesi \int_X  \f{1}{V(z,t)} \Big(1+\f{d(x,z)}{s}\Big)^{-\lambda}|\varphi(t\sqrt{L})f(z)|d\mu(z)\\
	&\lesi [M^*_{L,\varphi, \lambda}f(x)]^{1-\theta} \int_X \f{1}{V(z,t)}\Big(1+\f{d(x,z)}{t}\Big)^{-\theta \lambda} |\varphi(t\sqrt{L})f(z)|^\theta d\mu(z).
	\end{aligned}
	\end{equation}
	Applying Lemma \ref{lem-ele est} we have
	\[
	\begin{aligned}
	\Big(1+\f{d(x,y)}{t}\Big)^{-\lambda}|I_1(y,t)|
	&\lesi [M^*_{L,\varphi, \lambda}f(x)]^{1-\theta}  \mathcal M(|\varphi_L^+f|^\theta)(x).
	\end{aligned}
	\]
	Likewise, we have
	\[
	\begin{aligned}
	\Big(1+\f{d(x,y)}{t}\Big)^{-\lambda}|I_2(y,t)|
		&\lesi [M^*_{L,\varphi, \lambda}f(x)]^{1-\theta}\mathcal M(|\varphi_L^+f|^\theta)(x)..
	\end{aligned}
	\]
	Therefore, for all $y\in X$ and $0<t<d_X$ we have
	\[
	\begin{aligned}
	\Big(1+\f{d(x,y)}{t}\Big)^{-\lambda}|\varphi(t\sqrt{L})f(y)|
	&\lesi [M^*_{L,\varphi, \lambda}f(x)]^{1-\theta}\mathcal M(|\varphi_L^+f|^\theta)(x)
	\end{aligned}
	\]
	which implies that
	\[
	M^*_{L,\varphi, \lambda}f(x)\lesi [M^*_{L,\varphi, \lambda}f(x)]^{1-\theta}\mathcal M(|\varphi_L^+f|^\theta)(x).
	\]
As a result, we come up with
		\[
	M^*_{L,\varphi,N}f(x)\lesi \left[\mathcal{M}(|\varphi^+_{L}f|^{\theta})(x)\right]^{\f{1}{\theta}}.
	\]
	Since $p/\theta> q_w$ we have $w\in A_{p/\theta}$. Hence,
	\[
	\begin{aligned}
	\|M^*_{L,\varphi,N}f\|_{L^p_w(X)}&\lesi \left\|\left[\mathcal{M}(|\varphi^+_{L}f|^{\theta})(x)\right]^{\f{1}{\theta}}\right\|_{L^p_w(X)}\\
	&\lesi \|\varphi^+_{L}f\|_{L^p_w(X)}.
	\end{aligned}
	\]
	This completes our proof.
\end{proof}

\subsection{Maximal function characterizations}
We are ready to give the proof of Theorem \ref{maximal function result}.

\begin{proof}[Proof of Theorem \ref{maximal function result}:] The unweighted case of the theorem was proved in \cite{BDK2}, we now adapt this argument to our present situation with some modifications due to the presence of the weight $w\in A_\vc(X)$. We will give the proof for the case $\mu(X)<\vc$, since the case $\mu(X)=\vc$ is similar and even easier.
	
We now divide the proof into 2 steps.  

\textbf{Step 1.} Let $p\in (0,1]$, $w\in A_\vc(X)$  and $M>\f{n}{2}\big(\f{q_w}{p}-1\big)$. We now claim that $H^{p}_{L, w,{\rm max}}(X)\hookrightarrow H^{p,\vc}_{L,w,at,M}(X)$. To do this, fix $f\in H^{p}_{L, w,{\rm max}}(X)\cap L^2(X)$. We shall show that $f$ has an $(L,p,\vc,w,M)$-representation $\sum_j \lambda_ja_j$ with $(\sum_j|\lambda|_j^p)^{1/p} \lesssim \| f\|_{H^p_{L,w,\max}}$. 

Let $\Phi$ be a function from Lemma \ref{lem:finite propagation}. For $M\in \mathbb{N}, M>\f{n}{2}(\f{q_w}{p}-1)$ we have
\begin{equation}
\label{Calderon forula}
\begin{aligned}
f=c_{\Phi,M}\int_0^\vc (t^2L)^{M}\Phi(t\sqrt{L})\Phi(t\sqrt{L})f\f{dt}{t}
\end{aligned}
\end{equation}
in $L^2(X)$, where $\displaystyle c_{\Phi,M}= \Big[\int_0^\vc x^{2M}\Phi(x)\f{dx}{x}\Big]^{-1}$.	

Then we have 
\begin{equation}
\label{eq-f1 f2}
\begin{aligned}
f&=c_{\Phi,M}\int_0^{T_0} (t^2L)^{M}\Phi(t\sqrt{L})\Phi(t\sqrt{L})f\f{dt}{t}+c_{\Phi,M}\int^\vc_{T_0} (t^2L)^{M}\Phi(t\sqrt{L})\Phi(t\sqrt{L})f\f{dt}{t}\\
&=c_{\Phi,M}\int_0^{T_0} (t^2L)^{M}\Phi(t\sqrt{L})\Phi(t\sqrt{L})f\f{dt}{t}+\psi(T_0\sqrt{L})f\\
&=: f_1+f_2
\end{aligned}
\end{equation}
in $L^2(X)$ where $T_0=d_X/2$ and
\begin{equation}
\label{psi-eq}
\psi(x)=c_{\Phi,M}\int_x^\vc t^{2M}\Phi^2(t)\f{dt}{t}=c_{\Phi,M}\int_1^\vc (tx)^{2M}\Phi^2(tx)\f{dt}{t}.
\end{equation}
It is easy to check that   $\psi \in \mathscr{S}(\mathbb{R})$ and is an even function with $\psi(0)=1$. We now define the maximal operator
$$
\mathbb{M}_{L}f(x)=\sup_{0<t<d_X}\sup_{d(x,y)<8t}\left[ |\psi(t\sqrt{L})f(y)| +|\Phi(t\sqrt{L})f(y)|\right].
$$
Then Proposition \ref{prop2} yields
\begin{equation}
\label{eq1-proff mainthm1}
\|\mathbb{M}_{L}f\|_{L_w^p(X)}\lesi \|f\|_{H^p_{L,w\max}(X)}.
\end{equation}

Since $T_0={\rm diam}\,X/2$, we have, for any $x\in X$,
\[
|\psi(T_0\sqrt{L})f(x)|\leq \sup_{d(z,y)<8T_0}|\psi(T_0\sqrt{L})f(z)|\le \inf_{y\in X}\mathbb{M}_{L}f(y).
\]
This, along with \eqref{eq1-proff mainthm1}, implies that
\[
\begin{aligned}
\|f_2\|_{L^\vc(X)}&\le  w(X)^{-1/p}\|\mathbb{M}_{L}f\|_{L_w^p(X)}
\lesi  w(X)^{-1/p}\|f\|_{H^p_{L, w,\max}(X)}.
\end{aligned}
\]

Therefore, $f_2$ is an $(L,p,\vc,w,M)$ atom (with a harmless multiple constant).

We now take care of the component  $f_1$. For each $k\in \mathbb{Z}$ we set
$$
\Om_i:=\{x\in X: \mathbb{M}_{L}f(x)>2^i\}.
$$
Since $\mathbb{M}_{L}f$ is lower--continuous and $X$ is bounded, there exists $i_0$ so that $\Om_{i_0}=X$ and $\Om_{i_0+1}\neq X$. Without loss of generality we may assume that $i_0=0$. Then for each $t>0$ we define
\begin{equation}
\label{eq-Om it}
\Om_i^t=\begin{cases}
\Om_0, \ \ &i=0,\\
\{x: d(x,\Om_{i}^c)>4t\}, \ \ & i>0,
\end{cases}
\end{equation}
and $\widehat{\Omega}_i^t=\Om_i^t\backslash \Om_{i+1}^t$.

It is clear that $X=\bigcup_{i=0}^\vc \widehat{\Omega}_i^t$ for each $t>0$. Hence, 
\begin{equation}\label{eq1-f1}
\begin{aligned}
f_1&=\sum_{i=0}^\infty c_{\Phi,M}\int_0^{T_0} (t^2L)^{M}\Phi(t\sqrt{L})\left[\Phi(t\sqrt{L})f\cdot \chi_{\widehat{\Omega}_i^t}\right]\f{dt}{t}\\
&=:\sum_{i=0}^\infty f_1^i.
\end{aligned}
\end{equation}

Arguing similarly to \cite{BDK2} we obtain 
	\[
	|f_1^0(x)|\lesi 1, \ \ \ \forall x\in X
	\]
which implies that
\[
\begin{aligned}
|f_1^0(x)|&\lesi w(X)^{-1/p}w(\Om_0)^{1/p}\\
&\lesi w(X)^{-1/p}\sum_{i=0}^\vc 2^iw(\Om_i)^{1/p}\sim w(X)^{-1/p}\|\mathbb{M}_Lf\|_{L^p_w(X)}\\
&\lesi w(X)^{-1/p}\|f\|_{H^p_{L,w,\max}(X)}.	
\end{aligned}
\]
Hence, $f^1_0$ is an $(L,p,\vc,w,M)$ atom (with a harmless multiple constant).	 

We now take care of the term $f_1^i$ with $i>0$. To do this, for each $i>0$ we apply a covering lemma in \cite{CW} (see also \cite[Lemma 5.5]{DKP}) to obtain a collection of balls $\{B_{i,k}:=B(x_{B_{i,k}},r_{B_{i,k}}): x_{B_{i,k}}\in \Om_i, r_{B_{i,k}}=d(x_{B_{i,k}},\Om_i^c)/2, k=1,\ldots\}$ so that
\begin{enumerate}[{\rm (i)}]
	\item $\displaystyle \Om_i=\cup_k B(x_{B_{i,k}},r_{B_{i,k}})$;
	\item $\displaystyle  \{B(x_{B_{i,k}},r_{B_{i,k}}/5)\}_{k=1}^\vc$ are disjoint.
\end{enumerate}
For each $i, k\in \mathbb{N}^+$  and $t>0$ we set $B^t_{i,k}=B(x_{i,k},r_{B_{i,k}}+2t)$ which is a ball having the same center as $B_{i,k}$ with radius being $2t$ greater than the radius of  $B_{i,k}$. Then, for each $i, k\in \mathbb{N}^+$  and $t>0$, we set
\[
R_{i,k}^t=\begin{cases}
\widehat{\Omega}_i^t\cap B^t_{i,k}, \ \ &\text{if} \ \ \widehat{\Omega}_i^t\cap B_{i,k}\ne \emptyset\\
0,  \ \ &\text{if} \ \ \widehat{\Omega}_i^t\cap B_{i,k}= \emptyset,
\end{cases}
\]
and 
\begin{equation}
\label{eq-Eit}
E_{i,k}^t=R_{i,k}^t\backslash \cup_{\ell>k}R_{i,k}^t.
\end{equation}
It is easy to see that for each $i\in \mathbb{N}^+$ and $t>0$ we have
\[
\widehat{\Omega}_i^t =\bigcup_{k\in \mathbb{N}^+}E_{i,k}^t.
\]
Hence, from \eqref{eq1-f1} we have, for $i\in \mathbb{N}^+$,
\[
\begin{aligned}
f^i_1&=\sum_{k\in \mathbb{N}^+} c_{\Phi,M}\int_0^{T_0} (t^2L)^{M}\Phi(t\sqrt{L})\left[\Phi(t\sqrt{L})f\cdot \chi_{E_{i,k}^t}\right]\f{dt}{t}
\end{aligned}
\]
and set $a_{i,k}=0$ if $E_{i,k}^t=\emptyset$.

We now define $\lambda_{i,k}=2^i w(B_{i,k})^{1/p}$ and $a_{i,k}=L^Mb_{i,k}$ where
\begin{equation}\label{eq-bik}
b_{i,k}=\f{c_{\Phi,M}}{\lambda_{i,k}}\int_0^{T_0} t^{2M}\Phi(t\sqrt{L})\left[\Phi(t\sqrt{L})f\cdot \chi_{E_{i,k}^t}\right]\f{dt}{t}.
\end{equation}
Then it can be seen that 
$$
\begin{aligned}
f_1&=\sum_{i\in \mathbb{N}^+}f_1^i=\sum_{i,k\in \mathbb{N}^+}\lambda_{i,k}a_{i,k}
\end{aligned}
$$
in $L^2(X)$; moreover,
\[
\begin{aligned}
\sum_{i,k\in \mathbb{N}^+}|\lambda_{i,k}|^p&=\sum_{i,k\in \mathbb{N}^+} 2^{ip}w(B_{i,k})\lesi \sum_{i\in \mathbb{N}^+} 2^{ip}w(\Om_i)
\lesi \|\mathbb{M}_Lf\|^p_{L_w^p(X)}\lesi \|f\|^p_{H^p_{L,w,\max}(X)}.
\end{aligned}
\]
Therefore, it suffices to prove that each $a_{i,k}\ne 0$ is an $(L, p,\vc, w, M)$ atom associated to the ball $B^*_{i,k}:=8B_{i,k}$. Indeed, if $r_{B_{i,k}}<t/2$, then we have $d(x_{B_{i,k}}, \Om_i^c)=2r_{B_{i,k}}<t$. Therefore, 
$$
B^t_{i,k}=B(x_{B_{i,k}}, r_{B_{i,k}}+2t) \subset \{x: d(x,\Om_i^c)<4t\}.
$$
This implies that $R_{i,k}^t:=\widehat{\Omega}_i^t\cap B^t_{i,k}=\emptyset$. Hence, if $a_{i,k}\ne 0$, then $r_{B_{i,k}}\ge t/2$. This, along with \eqref{eq-bik} and Lemma \ref{lem:finite propagation}, implies that 
\[
\supp L^mb_{i,k}\subset B^*_{i,k}, \ \ \ \forall m=0,1,\ldots, M.
\]
By a similar argument to that in \cite{BDK2} we can show that
\[
|L^mb_{i,k}|_{L^\vc(X)}\lesi r_{B_{i,k}}^{2(M-m)}\mu(B_{i,k})^{-1/p}, \ \ \ \forall m=0,1,\ldots, M.
\]
This implies that each $a_{i,k}\ne 0$ is an $(L, p,\vc, w, M)$ atom and hence this completes the proof of Step 1.

\bigskip

\textbf{Step 2.} Let $p\in (0,1]$, $w\in A_\vc(X)$, $q\in (q_w,\vc]$, and $M>\f{n}{2}\big(\f{q_w}{p}-1\big)$. We now claim that $H^{p,q}_{L, w,{\rm max}}(X)\hookrightarrow H^{p}_{L, w,{\rm max}}(X)$. It suffices to show that there exists $C>0$ so that 
\begin{equation}
\label{eq-atom in Hp}
\left\|\sup_{0<t<d_X^2} |e^{-tL}a|\right\|_{L^p_w(X)}\le C
\end{equation}
for all $(L,p,q,w,M)$ atoms.

The proof of \eqref{eq-atom in Hp} is quite standard and is not difficult, hence we omit the details.

From the results in Steps 1 and 2 we conclude that for all $p\in (0,1]$, $w\in A_\vc(X)$, $q\in (q_w,\vc]$, and $M>\f{n}{2}\big(\f{q_w}{p}-1\big)$ we have
\[
H^{p,q}_{L, w,{\rm max}}(X)\equiv H^{p}_{L, w,{\rm max}}(X)
\]
which, together with Proposition \ref{prop2}, yields that
\[
H^{p,q}_{L, w,{\rm max}}(X)\equiv H^{p}_{L, w,{\rm max}}(X)\equiv H^{p}_{L, w,{\rm rad}}(X).
\]
This completes our proof.
\end{proof}    

%\begin{rem}
%	The argument in the proof of Theorem \ref{mainthm1} also shows that the result in Theorem \ref{mainthm1} is still true if we define  the \textbf{non-tangential}  maximal function and  the \textbf{radial} maximal function by
%	\[
%	f^*_{L}(x)=\sup_{0<t<{\rm diam^2\,}X}\sup_{d(x,y)<t}|e^{-tL}f(y)|
%	\]
%	and
%	\[
%	f^+_{L}(x)=\sup_{0<t<{\rm diam^2\,}X}|e^{-tL}f(x)|,
%	\]
%	respectively.
%\end{rem}
\section{Weighted regularity estimates for inhomogeneous Dirichlet and Neumann problems}
  This section is dedicated to the proofs of Theorems \ref{mainthm1-bounded domain}-\ref{mainthm3-upper half-space}. To do this, we first prove a number of estimates for inhomogeneous Dirichlet problems and Neumann problems (see Subsections 3.1 and 3.2).
  These results are of  independent interest and should have applications in other settings apart from those in this paper. 
\subsection{Dirichlet Laplacian problems}

Let $\Om$ be a open connected domain in $\Rn$. Denote by $\Delta_D$ the Dirichlet Laplacian define on $\mathcal D(\Delta_D):=\{u\in W^{1,2}_0(\Om): \Delta u\in L^2(\Om)\}$ such that
\begin{equation}\label{eq-formula L}
\langle \Delta_Df, g \rangle =\int_{\Omega} \nabla f\cdot \nabla g, \ \ \ \forall f\in \mathcal D(\Delta_D), \ \ g\in W^{1,2}_0(\Om).
\end{equation}

Denote by $p_{t,\Delta_D}(x,y)$ the kernel of $e^{-t\Delta_D}$. It is well-known that 
\begin{equation}
\label{GU-Dirichlet}
0\le p_{t,\Delta_D}(x,y)\le \f{1}{(4\pi t)^{n/2}}\exp\Big(-\f{|x-y|^2}{4t}\Big)
\end{equation}
for all $t>0$ and $x,y\in \Omega$.

We now consider the following condition: There exist a constant $\beta>0$ and $C>0$ so that
\begin{equation}
\label{L2 gradient estimate}
\Big(\int_{x\in \Om} |\nabla_x^2 p_{t,\Delta_D}(x,y)|^2e^{\beta \f{|x-y|^2}{t}}dx\Big)^{1/2} \le C_\beta t^{-2}|B_\Om(y,\sqrt{t})|^{-1}, \ \ \ \forall y\in \Om, t>0.
\end{equation}

It is obviously that if  $\nabla_x^2 p_{t,\Delta_D}$ has a Gaussian upper bound, then \eqref{L2 gradient estimate} is satisfied. In our applications, 
we will show that \eqref{L2 gradient estimate} is satisfied if $\Om$ is one of the following domains:
\begin{enumerate}[(i)]
	\item a bounded, simply connected, semiconvex domain;
	\item a convex domain above a Lipschitz graph;
	\item the upper-half space.
\end{enumerate}
%then \eqref{L2 gradient estimate} is satisfied.
\begin{thm}\label{thm1-Dirichlet}
	Suppose that the second derivative of the Green function $\nabla^2 \Delta_D^{-1}$ is bounded on $L^{p_0}(X)$ for some $p_0\ge 2$. Suppose that the Dirichlet Laplacian $\Delta_D$ satisfies \eqref{L2 gradient estimate}. Then we have
	\begin{enumerate}[{\rm (i)}]
		\item The second derivative of the Green function $\nabla^2 \Delta_D^{-1}$ is bounded on $L^p_w(\Om)$ for all $1<p<p_0$ and $w\in A_p(\Rn)\cap RH_{(p_0/p)'}$, and is bounded from $L^1_w(\Om)$ into $L^{1,\vc}_w(\Om)$ for $w\in A_1\cap RH_{p_0'}$.
		\item The second derivative of the Green function $\nabla^2 \Delta_D^{-1}$ is bounded from $H^p_{\Delta_D,w}(\Om)$ into $H^p_{Mi,w}(\Om)$ for all $0<p\le 1$ and $w\in \bigcup_{1<r<p_0}A_r(\Rn)\cap RH_{(p_0/r)'}(\Rn)$.
	\end{enumerate}
\end{thm}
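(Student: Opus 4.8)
My goal is to prove Theorem \ref{thm1-Dirichlet}, which establishes weighted $L^p$ bounds (part (i)) and weighted Hardy space bounds (part (ii)) for the second derivative of the Green operator $\nabla^2 \Delta_D^{-1}$ under the structural hypotheses that it is $L^{p_0}$-bounded for some $p_0 \ge 2$ and that the heat kernel satisfies the weighted $L^2$ second-derivative estimate \eqref{L2 gradient estimate}. The natural strategy is to interpret $T := \nabla^2 \Delta_D^{-1}$ as a Calder\'on--Zygmund-type operator whose smoothness is encoded not by pointwise kernel bounds but by off-diagonal ($L^2$ with Gaussian weight) estimates for the associated heat semigroup, and then to run a weighted good-$\lambda$ / sparse-domination machinery adapted to this semigroup structure. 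The key mechanism is to write the Green operator through its heat-semigroup resolvent representation $\Delta_D^{-1} = \int_0^\infty e^{-t\Delta_D}\,dt$, so that $T = \int_0^\infty \nabla^2 e^{-t\Delta_D}\,dt$, and to split each $T$ into a local (small-time) part controlled by \eqref{L2 gradient estimate} and a global (large-time) part handled by the $L^{p_0}$-boundedness together with the Gaussian bound \eqref{GU-Dirichlet}.

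For part (i), the plan is to establish that $T$ satisfies the off-diagonal bounds needed to invoke the weighted extrapolation theory of Auscher--Martell for operators beyond Calder\'on--Zygmund. Concretely, I would verify the two ingredients of their framework: first, an $L^{p_0}$-boundedness of $T$ (given by hypothesis); and second, for the ``approximation to the identity'' $A_r := I - (I - e^{-r^2 \Delta_D})^{m}$ for suitable $m$, both the reduced-oscillation estimate $\left(\fint_B |T(I-A_{r_B})f|^2\right)^{1/2} \lesssim \big(\inf_B \mathcal{M}(|f|^{2})\big)^{1/2}$-type control via \eqref{L2 gradient estimate}, and the composition estimate $\left(\fint_B |TA_{r_B}f|^{p_0}\right)^{1/p_0} \lesssim \big(\inf_B \mathcal{M}(|Tf|^{2})\big)^{1/2}$ using the $L^{p_0}$ bound. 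These two estimates drive the weighted $(p,w)$-boundedness for $1<p<p_0$ with $w \in A_p \cap RH_{(p_0/p)'}$, and the endpoint weak-type $(1,1)$ for $w \in A_1 \cap RH_{p_0'}$ follows from a weighted Calder\'on--Zygmund decomposition at level $\lambda$, using \eqref{L2 gradient estimate} to estimate the ``bad'' part.

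For part (ii), I would exploit the identification from Theorem \ref{maximal function result} that $H^p_{\Delta_D,w}(\Om)$ admits an atomic $(L,p,q,w,M)$-decomposition, reducing the problem to a uniform estimate $\|T a\|_{H^p_{Mi,w}(\Om)} \lesssim 1$ for each atom $a = \Delta_D^M b$ supported in a ball $B$. The strategy here is to use Miyachi's maximal-function characterization (Theorem \ref{MiyachiTheorem}) so that it suffices to bound $\|(Ta)^+_\Om\|_{L^p_w(\Om)}$, splitting into the local annuli $S_j(B)$. On the near region and the far regions I would use the Gaussian-weighted gradient estimate \eqref{L2 gradient estimate} combined with the cancellation $a = \Delta_D^M b$ (which yields rapid decay in $j$ after integrating in $t$, since each power of $\Delta_D^{-1}$ absorbed into $b$ gains a factor $r_B^2$), then sum against the weight using H\"older's inequality with the reverse-H\"older exponent $(p_0/r)'$ to pass from the unweighted $L^2$ or $L^{p_0}$ bounds to the weighted estimate; the condition $M > \tfrac n2(q_w/p - 1)$ guarantees enough decay for the $\ell^p$-summation of atoms to converge. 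The subtle point is handling atoms whose ball $B$ touches the boundary $\partial\Om$: there the Miyachi atom definition permits a non-cancelling boundary atom (the case $2B\subset\Om$, $4B\cap\partial\Om\ne\emptyset$), so $Ta$ must be shown to be, up to normalization, such a boundary atom, which is where \eqref{L2 gradient estimate} localized near $\partial\Om$ does the essential work.

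\textbf{The main obstacle.}
I expect the principal difficulty to be part (ii) near the boundary: controlling $\nabla^2\Delta_D^{-1}a$ for atoms supported in balls abutting $\partial\Om$, where one cannot use moment cancellation against polynomials (the standard free-space argument) and must instead produce a Miyachi boundary atom directly. The delicate interplay is that the Dirichlet heat kernel's second spatial derivative need not satisfy a pointwise Gaussian bound on a merely semiconvex domain, so the weighted $L^2$ hypothesis \eqref{L2 gradient estimate} is the only smoothness available and must be leveraged through $L^2$-averaging on the annuli $S_j(B)$ rather than pointwise kernel decay; reconciling this $L^2$ information with the $L^q_w$ size normalization required of a Miyachi atom, uniformly over all scales $r_B$ and all boundary configurations, is the technical heart of the argument.
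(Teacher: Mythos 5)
Your plan for part (i) is essentially the paper's argument: both rest on the representation $\Delta_D^{-1}=\int_0^\infty e^{-t\Delta_D}\,dt$, the regularization $(I-e^{-r_B^2\Delta_D})^m$, an annular $L^{q_0}$ kernel estimate obtained by interpolating the hypothesized $L^{p_0}$ bound with \eqref{L2 gradient estimate}, and a weighted singular-integral criterion of Auscher--Martell type (the paper's Theorem \ref{Maintheorem-singularIntegral}). That part is sound.

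For part (ii) there is a genuine gap. You propose to estimate $(Ta)^+_\Om$ by splitting over the annuli $S_j(B)$ and extracting decay in $j$ from \eqref{L2 gradient estimate} and the cancellation of $a$. This overlooks the structural fact on which the paper's proof turns: since $a=\Delta_D^Mb$ with $\supp\Delta_D^kb\subset B$ for all $k\le M$, one has the exact identity $\nabla^2\Delta_D^{-1}a=\nabla^2\Delta_D^{M-1}b$, so $Ta$ is supported \emph{exactly} in $B$ --- there is no tail on the annuli at all. This support containment comes from the algebra of the atom, not from the heat-semigroup integral (which spreads support instantly), so the annular route discards precisely the information that makes the proof work. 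If you proceed as proposed, the pieces of $Ta$ living on far annuli $S_j(B)$ with $2^{j+1}B$ well inside $\Om$ would have to be renormalized into Miyachi atoms of the interior type, which requires vanishing moments up to order $\lfloor n(q_w/p-1)\rfloor$; nothing in your plan produces those moments, and they do not hold for the individual annular pieces. (A molecular characterization of $H^p_{Mi,w}(\Om)$ could in principle absorb this, but no such theory is set up here and you would have to build it.) With the exact support in hand, the remaining work is what the paper does: in the interior case $4B\subset\Om$ the moment condition for $Ta$ is obtained by repeated integration by parts against $x^\alpha\psi$ with a cutoff $\psi\equiv1$ near $B$, using $M>\tfrac12\lfloor n(q_w/p-1)\rfloor$ so that $\nabla^{2M}[x^\alpha\psi]=0$ on $B$ (note $M$ enters here, not in the $\ell^p$ summation as you suggest); in the boundary case $4B\cap\Om^c\ne\emptyset$ no cancellation is needed and one checks $\|(Ta)^+_\Om\|_{L^p_w}\lesssim1$ directly, observing that the truncated maximal function vanishes off $8B_\Om$ because $0<t<\delta(x)/2$ forces $\phi((x-y)/t)=0$ there. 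The size normalization in both cases is exactly part (i) applied to $a$.
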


In order to prove the item (i) in Theorem \ref{thm1-Dirichlet} we need the following criterion for the weighted estimates for singular integrals whose proof is similar to that of \cite[Theorem 3.1]{BD}, and hence we omit the details.
\begin{thm}\label{Maintheorem-singularIntegral}
	Let $T$ be a bounded linear operator on $L^{p_0}(\Om)$ with $1<p_0<\vc$.  Also assume that there exist
	$m\in \mathbb{N}$, $\delta >0$ and $1 < p_2 < \vc$ such that for any ball $B\subset \Om$, the operator $T(I-e^{r_B^2L})^m$ has a kernel $K_{m,r_B}(x,y)$ satisfying
	\begin{equation}\label{eq1-maintheorem}
	\Big(\int_{S_j(B_\Om)}|K_{m,r_B}(y,z)|^{p_0}dy\Big)^{1/p_0}\leq C 2^{-j\delta}|2^jB_\Om|^{1/p_0-1}
	\end{equation}
	for all $z\in B$ and all $j\geq 2$.
	
Then, we have:
	
	(a) If $\delta>0$, then for any $1<p<p_0$ and $w\in A_p(\Rn)\cap RH_{(p_0/p)'}(\Rn)$, the operator $T$ is bounded on $L^p(\Om,w)$.
	
	(b) For any $w\in A_1(\Rn)\cap RH_{p'_0}(\Rn)$, if $\delta>n$, then $T$ is bounded from
	$L^1(\Om,w)$ into $L^{1,\infty}(\Om,w)$ .
\end{thm}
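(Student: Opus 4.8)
The statement is a weighted Blunck--Kunstmann/Auscher--Martell criterion, and the plan is to run the scheme behind \cite[Theorem 3.1]{BD} with the weight retained throughout. The starting point is to recast the kernel bound \eqref{eq1-maintheorem} as an averaged spreading estimate. Writing $A_{r_B}:=I-(I-e^{-r_B^2L})^m=\sum_{k=1}^m(-1)^{k+1}\binom{m}{k}e^{-kr_B^2L}$, a finite combination of heat operators (each $e^{-kr_B^2L}$ having a Gaussian kernel, cf.\ \eqref{GU-Dirichlet}), one gets the pointwise bound $|A_{r_B}g|\lesi \mathcal M g$ and rapidly decaying $L^{p_0}$ off-diagonal bounds over the annuli $S_j(B)$. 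For $f$ supported in a ball $B$, Minkowski's integral inequality turns \eqref{eq1-maintheorem} into
\[
\Big(\fint_{S_j(B)}\big|T(I-e^{-r_B^2L})^mf\big|^{p_0}\,dy\Big)^{1/p_0}\lesi 2^{-j\delta}\,\f{|B|}{|2^jB|}\,\fint_B|f|,\qquad j\ge 2,
\]
together with the analogous exponentially decaying estimate for $A_{r_B}f$. Combined with the $L^{p_0}$-boundedness of $T$, these are exactly the hypotheses of the Blunck--Kunstmann criterion with lower exponent $1$ and upper exponent $p_0$, which already yields the \emph{unweighted} backbone: $T$ is of weak type $(1,1)$ and bounded on $L^r(\Om)$ for every $1<r<p_0$, the sole requirement being $\sum_j 2^{-j\delta}<\vc$, i.e.\ $\delta>0$. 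The weight classes that appear, $A_p\cap RH_{(p_0/p)'}$ (strong type) and $A_1\cap RH_{p_0'}$ (weak endpoint), are precisely those attached to this exponent pair.

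For part (a) I would upgrade the unweighted bound to the weighted range by a good-$\lambda$/Fefferman--Stein argument. Fixing a ball $B$, split $f=f\chi_{4B}+\sum_{j\ge 2}f\chi_{S_j(B)}$ and $Tf=T(I-e^{-r_B^2L})^mf+TA_{r_B}f$: the local piece is controlled by the $L^{p_0}$-bound of $T$ and of $A_{r_B}$, while the far pieces are summed by the spreading estimate. This produces a pointwise control of a sharp maximal function of $Tf$ by the Hardy--Littlewood maximal function of $f$, after which the weighted norm is recovered from the weighted Fefferman--Stein inequality (valid since $w\in A_\vc$) and the boundedness of $\mathcal M$ on the appropriate weighted space. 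The two weight hypotheses enter exactly here: $RH_{(p_0/p)'}$ lets one pass, via H\"older, from the unweighted $L^{p_0}$ average of the local piece to a weighted one, and $A_p$ (with $1<p<p_0$) supplies the maximal-function bounds; the geometric factor $2^{-j\delta}$ only has to beat the doubling growth of the annuli, so $\delta>0$ suffices.

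For part (b) I would perform a Calder\'on--Zygmund decomposition $f=g+\sum_i b_i$ at height $\lambda$ adapted to $w\,dx$, with $b_i$ supported in balls $B_i$ and $\sum_i w(B_i)\lesi \lambda^{-1}\|f\|_{L^1_w(\Om)}$. The good part is treated by the $L^{p_0}$-bound together with $RH_{p_0'}$, and the enlarged balls $\bigcup_i 4B_i$ are discarded at acceptable cost since $w$ is doubling. On the complement one must estimate
\[
\sum_i\sum_{j\ge 2} w\Big(S_j(B_i)\cap\big\{\,\big|T(I-e^{-r_{B_i}^2L})^m b_i\big|>\lambda\,\big\}\Big)
\]
by Chebyshev, the spreading estimate, the reverse H\"older inequality $RH_{p_0'}$, and the $A_1$-bound $w(2^jB_i)\lesi 2^{jn}w(B_i)$. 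Summing these weighted annular tails is what consumes the decay, and balancing it against the growth $w(2^jB_i)/w(B_i)$ on dilated balls is what raises the threshold to $\delta>n$ at the weak endpoint, in contrast with the strong-type range.

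The main obstacle, and the point that makes this more than bookkeeping, is the tension between the \emph{unweighted} hypothesis and the weighted conclusion below $p_0$: since $T$ is assumed bounded only on the unweighted $L^{p_0}$, the weight can never be inserted directly into an $L^{p_0}$ estimate, so every passage between $w\,dx$-averages and $dx$-averages must be mediated by the reverse-H\"older condition, while the off-diagonal decay from \eqref{eq1-maintheorem} must remain strong enough to dominate the growth of $w$ on dilated balls. Making these two competing effects balance under the stated thresholds on $\delta$ — equivalently, verifying the convergence of the weighted annular sums — is the crux; the remaining steps are the standard good-$\lambda$, Fefferman--Stein and Calder\'on--Zygmund machinery, which is exactly why the paper can safely refer to \cite[Theorem 3.1]{BD} and omit the details.
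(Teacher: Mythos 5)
Your outline has the right overall architecture --- the paper offers no proof beyond the citation of \cite[Theorem 3.1]{BD}, whose argument is indeed the Blunck--Kunstmann/Auscher--Martell machinery you invoke --- but two steps would fail as written. First, in part (a) the localization you propose (fix $B$, split $f=f\chi_{4B}+\sum_{j\ge 2}f\chi_{S_j(B)}$, and sum ``the far pieces by the spreading estimate'' to control a sharp maximal function of $Tf$ on $B$) uses the kernel hypothesis in the transposed direction: \eqref{eq1-maintheorem} integrates the \emph{output} variable over $S_j(B_\Om)$ for $z\in B$, i.e.\ it is an $L^1(B)\to L^{p_0}(S_j(B_\Om))$ bound describing how mass supported in $B$ spreads outward, whereas a sharp-function/good-$\lambda$ estimate on $B$ needs the reverse orientation (source in the annuli, target in $B$), which is not assumed and does not follow for a non-self-adjoint $T$. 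Moreover, a literal pointwise bound $M^{\#}(Tf)\lesi \mathcal{M}f$ followed by weighted Fefferman--Stein would give $L^p_w$-boundedness for every $w\in A_p$ and every $1<p<\vc$, with no $RH_{(p_0/p)'}$ condition and no ceiling at $p_0$; since such boundedness fails in the intended applications, whatever inequality the local analysis produces cannot be that one. The route that matches the orientation of \eqref{eq1-maintheorem} (and the one behind \cite[Theorem 3.1]{BD}) is to prove the weighted \emph{weak} type $(p,p)$ for every $p\in(1,p_0)$ and $w\in A_p(\Rn)\cap RH_{(p_0/p)'}(\Rn)$ by a Calder\'on--Zygmund decomposition --- there the bad pieces are supported in balls, so the outgoing off-diagonal decay is exactly what is needed --- and then to upgrade to strong type by Marcinkiewicz interpolation, using the self-improvement of the classes $A_p$ and $RH_q$ (Lemma \ref{weightedlemma1}(iii)--(iv)) to find $p_1<p<p_2<p_0$ with $w$ admissible at both endpoints.

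Second, in part (b) two terms are mishandled or missing. The good part cannot be ``treated by the $L^{p_0}$-bound together with $RH_{p_0'}$'': $T$ is bounded only on the \emph{unweighted} $L^{p_0}(\Om)$, and the reverse H\"older inequality is a per-ball condition that does not allow you to insert $w$ into a global estimate for $\|Tg\|_{L^{p_0}(\Om)}$. Likewise, $Tb_i$ splits as $T(I-e^{-r_{B_i}^2L})^m b_i+TA_{r_{B_i}}b_i$ with $A_{r}=I-(I-e^{-r^2L})^m$, and your outline only estimates the first summand on the annuli; the second is acted on by $T$ alone and again cannot be reached through the unweighted $L^{p_0}$ bound. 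Both terms are handled by invoking the weighted strong type from part (a) at some $p\in(1,p_0)$ close to $1$: note that $(p_0/p)'\downarrow p_0'$ as $p\downarrow 1$, so by the openness of the reverse H\"older classes any $w\in A_1(\Rn)\cap RH_{p_0'}(\Rn)$ lies in $A_p(\Rn)\cap RH_{(p_0/p)'}(\Rn)$ for $p$ sufficiently close to $1$; one then applies that bound to $g$ and to $\sum_i A_{r_{B_i}}b_i$, the latter being dominated pointwise by $\lambda$ times a rapidly converging sum of dilated indicators thanks to the Gaussian kernel of $e^{-tL}$. With these repairs, your accounting of where $\delta>0$ versus $\delta>n$ and where $A_1$ versus $RH_{p_0'}$ enter is consistent with the statement.
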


\begin{proof}[Proof of Theorem \ref{thm1-Dirichlet}:]  	
	(i) Fix $w\in A_p(\Rn)\cap RH_{(p_0/p)'}$. Then we can find $q_0\in (1,p_0)$ so that $w\in A_p(\Rn)\cap RH_{(q_0/p)'}(\Rn)$.  Fix a ball $B$ with radius $r_B$. For $m>n/2$, we observe that
	\begin{equation*}
	\Delta_D^{-1}=\int_{0}^{\infty}e^{-t\Delta_D}\,dt
	\end{equation*}
	so that
	\begin{equation}\label{Kmr}
	\nabla^2\Delta_D^{-1}(I-e^{r_B^2\Delta_D})^m=\int_{0}^{\infty}\nabla^2e^{-t\Delta_D}(I-e^{-r_B^2\Delta_D})^m\frac{dt}{\sqrt{t}}=\int_{0}^{\infty}g_{r_B,m}(t)\nabla^2e^{-t\Delta_D}dt
	\end{equation}
	where $g_{r_B,m}:\mathbb{R}^+\rightarrow \mathbb{R}$ is a function such that
	\begin{equation}\label{g-function}
	\int_{0}^{\infty}|g_{r_B,m}(t)|e^{\frac{-\alpha 4^jr_B^2}{t}} \,\f{dt}{t}
	\leq C_{m,\alpha}4^{-jm}, 
	\end{equation}
	for any $\alpha>0$. See for example \cite[p.932]{ACDH}.\\
	
	It follows that
	\[
	K_{m,r_B}(y,z)=\int_{0}^{\infty}g_{r_B,m}(t)\nabla^2p_{t,\Delta_D}(y,z)dt.
	\]
	Hence, for $z\in B$ and $j\ge 2$ we have
	\[
	\Big(\int_{S_j(B_\Om)}|K_{m,r_B}(y,z)|^{q_0}dy\Big)^{1/q_0}\le 	\int_{0}^{\infty}g_{r_B,m}(t)\Big(\int_{S_j(B_\Om)}|\nabla^2p_{t,\Delta_D}(y,z)|^{q_0}dy\Big)^{1/q_0}dt.
	\]
	
	On the other hand, since $\nabla^2 \Delta_D^{-1}$ is bounded on $L^{p_0}$
	\begin{equation}
	\label{Lp0 gradient estimate}
	\begin{aligned}
	\Big(\int_{S_j(B)} |\nabla_y^2 p_{t,\Delta_D}(y,z)|^{p_0}dy\Big)^{1/p_0} &\le \Big(\int_{\Om} |\nabla_y^2 p_{t,\Delta_D}(y,z)|^{p_0}dy\Big)^{1/p_0}\\
	 &\le \Big(\int_{\Om} |\Delta_D p_{t,\Delta_D}(y,z)|^{p_0}dy\Big)^{1/p_0}\\
	&\lesi t^{\f{n}{2p_0}-\f{n}{2}-1}\\
	&\lesi |2^jB_\Om|^{1/p_0-1}t^{-1} \Big(\f{4^jr_B^2}{t}\Big)^{\f{n}{2}-\f{n}{2p_0}}
	\end{aligned}
	\end{equation}
for all   $z\in B_\Om$ and $t>0$.
    From \eqref{L2 gradient estimate}, we have
    \[
    \begin{aligned}
    \Big(\int_{S_j(B_\Om)}|\nabla^2p_{t,\Delta_D}(y,z)|^2dy\Big)^{1/2}&\lesi | B_\Om(z,\sqrt{t})|^{-1/2}t^{-1}e^{-\beta\f{4^{j}r^2_B}{t}}\\
    &\lesi | 2^jB_\Om|^{-1/2}t^{-1}e^{-\beta\f{4^{j}r^2_B}{2t}}.
    \end{aligned}
    \]
Interpolating this and \eqref{Lp0 gradient estimate} we obtain
    \[
    \Big(\int_{S_j(B_\Om)}|\nabla^2p_{t,\Delta_D}(y,z)|^{q_0}dy\Big)^{1/q_0}\lesi |2^jB_\Om|^{1/q_0-1}t^{-1}e^{-\beta'\f{4^{j}r^2_B}{t}}.
    \]
	
	Therefore,
	\[
	\Big(\int_{S_j(B_\Om)}|K_{m,r_B}(y,z)|^{q_0}dy\Big)^{1/q_0}\lesi 	|2^jB_\Om|^{1/q_0-1}\int_{0}^{\infty}g_{r_B,m}(t) e^{-\beta\f{4^{j}r_B^2}{t}}  \f{dt}{t}
	\]
	which along with \eqref{g-function} implies that
	\[
	\Big(\int_{S_j(B_\Om)}|K_{m,r_B}(y,z)|^{q_0}dy\Big)^{1/q_0}\lesi 2^{-2mj} 	|2^jB_\Om|^{1/q_0-1}, \ \ \ j\ge 2.
	\]
	Applying Theorem \ref{Maintheorem-singularIntegral}, we get (i).
	
	\bigskip
	
	(ii) Fix $w\in \bigcup_{1<r<p_0}A_r(\Rn)\cap RH_{(p_0/r)'}(\Rn)$. Then there exists $q\in (1,p_0)$ so that $w\in A_q\cap RH_{(p_0/q)'}$. Let $a=\Delta_D^M b$ be an $(\Delta_D, p, q,w,M)$ atom associated to a ball $B$ where $M\in \mathbb{N}, \ M>\f{1}{2}\lfloor n(q_w/p-1)\rfloor$. We claim that $\nabla^2 \Delta^{-1}_D a$ is a $(p,q,w)_{Mi}$ atom. Indeed, we consider three cases.
	
	\bigskip
	
	\textbf{Case 1: $4B\subset \Om$.} We first observe that $\nabla^2 \Delta^{-1}_D a=\nabla^2 \Delta^{M-1}_D b$. Since supp\,$\Delta^{M-1}_D b\subset B$, we have supp\,$\nabla^2 \Delta^{-1}_D a\subset B$. Moreover, from (i) we have
	\[
	\|\nabla^2 \Delta^{-1}_D a\|_{L^q_w(\Om)}\lesi \|a\|_{L^q_w(\Om)}\lesi w(B)^{1/q-1/p}.
	\]
	We now verify that 
	\begin{equation}\label{eq-cancelation}
	\int_B x^\alpha \nabla^2 \Delta^{-1}_D a(x) dx =0
	\end{equation}
	for every multi-index $\alpha$ with $|\alpha|\le \lfloor n(q_w/p-1)\rfloor$.
	
	Let $\psi\in C^\vc(\Om)$ so that supp\,$\psi\subset 2B$ and $\psi=1$ in $\f{3}{2}B$. We then have supp\,$\partial^\beta [x^\alpha\psi(x)]\subset 2B$ and $\partial^\beta [x^\alpha\psi(x)]=0$ on $(2B)^c$ for every multi-index $\beta$. Therefore, by integration by part we have
	\[
	\begin{aligned}
	\int_B x^\alpha \nabla^2 \Delta^{-1}_D a(x) dx&= \int_B x^\alpha\psi(x) \nabla^2 \Delta^{M-1}_D b(x) dx\\
	&= \int_B \nabla^2 [x^\alpha\psi(x)] \Delta^{M-1}_D b(x) dx . \\
	\end{aligned}
	\]
	Since $\nabla^2 [x^\alpha\psi(x)]\in W^{1,2}_0(\Om)$, we have
	\[
	\begin{aligned}
	\int_B x^\alpha \nabla^2 \Delta^{-1}_D a(x) dx
	&= \int_B \nabla^3 [x^\alpha\psi(x)] \nabla \Delta^{M-2}_D b(x) dx.
	\end{aligned}
	\]
	Using integration by part again, we have
	\[
	\begin{aligned}
	\int_B x^\alpha \nabla^2 \Delta^{-1}_D a(x) dx
	&= \int_B \nabla^4 [x^\alpha\psi(x)] \Delta^{M-2}_D b(x) dx.
	\end{aligned}
	\]
	Repeating this process $(M-2)$ times we come up with 
	\[
	\begin{aligned}
	\int_B x^\alpha \nabla^2 \Delta^{-1}_D a(x) dx
	&= \int_B \nabla^{2M} [x^\alpha\psi(x)] b(x) dx.
	\end{aligned}
	\] 
	Since $\psi=1$ on $B$ and $M>\f{1}{2}\lfloor n(q_w/p-1)\rfloor$, we have $\nabla^{2M} [x^\alpha\psi(x)]=0$ for $x\in B$. As a consequence,
	\[
	\begin{aligned}
	\int_B x^\alpha \nabla^2 \Delta^{-1}_D a(x) dx
	&= 0
	\end{aligned}
	\]
	which proves \eqref{eq-cancelation}. 
	
	Therefore, $\nabla^2 \Delta^{-1}_D a$ is a $(p,q,w)_{Mi}$ atom associated to $B$.
	
	\bigskip
	
	\textbf{Case 2: $4B\cap \Om^c\ne \emptyset$.}  Set $\tilde a=\nabla^2\Delta_D^{-1} a$. Arguing similar to Case 1, we have supp\,$\tilde a \subset B_\Om$ and 
	\[
	\|\tilde a\|_{L^q_w(\Om)}\lesi w(B_\Om)^{1/q-1/p}\sim w(B)^{1/q-1/p}. 
	\]
	From Theorem \ref{MiyachiTheorem} we need to claim that
	\[
	\|\tilde a^+_\Om\|_{L^p_w(\Om)}\lesi 1.
	\]
	Indeed, let $\phi$ be a function as in \eqref{eq-phi}. We have
	\[
	\|\tilde a^+_\Om\|_{L^p_w(\Om)}\lesi \|\tilde a^+_\Om\|_{L^p_w(8B_\Om)} +\|\tilde a^+_\Om\|_{L^p_w((8B_\Om)^c)}.
	\]
	We claim that $\|\tilde a^+_\Om\|_{L^p_w((8B_\Om)^c)}=0$. Indeed, for $x\in (8B_\Om)^c$ we have
	\[
	\begin{aligned}
	\tilde a^+_\Om(x) &=\sup_{0<t<\delta(x)/2}\Big|\int_{B_\Om} \f{1}{t^n}\phi\Big(\f{x-y}{t}\Big)\tilde a(y)dy\Big|\\
	&\le \sup_{0<t<7r_B}\Big|\int_{B_\Om} \f{1}{t^n}\phi\Big(\f{x-y}{t}\Big)\tilde a(y)dy\Big|+\sup_{7r_B\le t<\delta(x)/2}\Big|\int_{B_\Om} \f{1}{t^n}\phi\Big(\f{x-y}{t}\Big)\tilde a(y)dy\Big|\\
	&=: E_1 + E_2.
	\end{aligned}
	\]
	Since $|x-y|>7r_B$ for $y\in B_\Om$ and $x\in (8B_\Om)^c$, $E_1=0$. Note that the term $E_2$ is valid if $\delta(x)>14 r_B$. In this situation, for $y\in B_\Om$ and $x\in (8B_\Om)^c$ we have
	\[
	|x-y|>\delta(x)-\delta(y).
	\]
	Since $4B\cap \Om^c\ne \emptyset$, we have $\delta(y)<4r_B$ for  each $y\in B_\Om$. Hence, for $y\in B_\Om$ and $x\in (8B_\Om)^c$ we have
	\[
	|x-y|>\delta(x)-\delta(y)>\delta(x)-4r_B>\delta(x)-\delta(x)/2=\delta(x)/2>t.
	\]
	As a consequence, 
	\[
	\phi\Big(\f{x-y}{t}\Big)=0
	\]
	for $y\in B_\Om$ and $x\in (8B_\Om)^c$ and $7r_B\le t<\delta(x)/2$. Hence, $E_2=0$.
	
	The estimates of $E_1$ and $E_2$ yield $\|\tilde a^+_\Om\|_{L^p_w((8B_\Om)^c)}=0$.
	
	Therefore,  
	\[
	\|\tilde a^+_\Om\|_{L^p_w(\Om)}\lesi \|\tilde a^+_\Om\|_{L^p_w(8B_\Om)}.
	\]
	Applying H\"older's inequality we obtain
	\[
	\begin{aligned}
	\|\tilde a^+_\Om\|_{L^p_w(8B_\Om)}&\lesi \|\tilde a^+_\Om\|_{L^q_w(8B_\Om)}w(8B_\Om)^{1/p-1/q}\\
	&\lesi w(B_\Om)^{1/q-1/p}w(8B_\Om)^{1/p-1/q}\\
	&\lesi 1
	\end{aligned}
	\]
	where in the second inequality we used the fact that $\tilde a^+_\Om \lesi \mathcal{M}\tilde a$ and $\mathcal{M}$ is bounded on $L^q_w$ for $w\in A_q$.
	\bigskip
	
	\textbf{Case 3: $\Om$ is bounded.} In this case, apart from the atoms considered in two cases above, it remains to consider the case $a=w(\Om)^{-1/p}\chi_\Om$. This case can be done similarly to that of the case 2.

	This completes our proof.
\end{proof}
The next result gives regularity estimates for the Dirichlet Green operator.
\begin{thm}\label{thm2-Dirichlet}
	Suppose that the second derivative of the Green function $\nabla^2 \Delta_D^{-1}$ is bounded on $L^{p_0}(X)$ for some $p_0\ge 2$. Assume that the Dirichlet Laplacian $\Delta_D$ satisfies \eqref{L2 gradient estimate}, and  there exist $\gamma\in (0,1]$ and $C,c>0$ so that
	\begin{equation}\label{Holder-Dirichlet}
	|p_{t,\Delta_D}(x,y)-p_{t,\Delta_D}(x,y')|\le \Big(\f{|y-y'|}{\sqrt{t}}\Big)^\gamma\f{C}{\mu(B_\Omega(x,\sqrt{t}))}\exp\Big(-\f{|x-y|^2}{ct}\Big)
	\end{equation} 
	for all $0<t<d_\Om:={\rm diam}\,\Om$ and  $x,y,y'\in \Om$ so that $|y-y'|<\sqrt{t}/2$. 
	
	Then the second derivative of the Green function $\nabla^2 \Delta_D^{-1}$ is bounded on $H^p_{Mi,w}(\Om)$ for all $\f{n}{n+\gamma}<p\le 1$ and $w\in \bigcup_{1<r<r_0}A_r(\Rn)\cap RH_{(p_0/r)'}(\Rn)$ where $r_0=\f{p(n+\gamma)}{n}$. 
	
	Hence, $\nabla^2 \Delta_D^{-1}$ is bounded from $H^p_{z,w}(\Om)$ into $H^p_{Mi,w}(\Om)$ for all $\f{n}{n+\gamma}<p\le 1$ and $w\in \bigcup_{1<r<r_0}A_r(\Rn)\cap RH_{(p_0/r)'}(\Rn)$.
\end{thm}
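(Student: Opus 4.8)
The plan is to prove the first assertion---boundedness of $T:=\nabla^2\Delta_D^{-1}$ on $H^p_{Mi,w}(\Om)$---and then deduce the concluding statement from the continuous embedding $H^p_{z,w}(\Om)\hookrightarrow H^p_{Mi,w}(\Om)$. By the atomic characterization of $H^p_{Mi,w}(\Om)$ together with the $L^q_w$-boundedness of $T$ from Theorem \ref{thm1-Dirichlet}(i) (which lets one pass from a uniform estimate on atoms to the full operator bound), it suffices to fix one $(p,q,w)_{Mi}$-atom $a$ supported in a ball $B$, with $q\in(q_w,p_0)$ chosen so that $w\in A_q\cap RH_{(p_0/q)'}$ and $q<r_0$, and to show $\|Ta\|_{H^p_{Mi,w}(\Om)}\lesi 1$ uniformly. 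Note that under the range $\f{n}{n+\gamma}<p\le 1$ and $q_w<r_0=\f{p(n+\gamma)}{n}$ one has $n(q_w/p-1)<\gamma\le 1$, so the floor $\lfloor n(q_w/p-1)\rfloor\le 0$ and the only cancellation available is the mean value zero---exactly matching the $\gamma$-Hölder regularity we intend to exploit. By Theorem \ref{MiyachiTheorem} the claim is equivalent to $\|(Ta)^+_\Om\|_{L^p_w(\Om)}\lesi 1$, and I split $\Om$ into the near region $8B_\Om$ and the far region $\Om\setminus 8B_\Om$. Unlike the operator atoms of Theorem \ref{thm1-Dirichlet}(ii), for which $Ta$ is compactly supported, here $Ta$ has a genuine tail, which is why the Hölder hypothesis \eqref{Holder-Dirichlet} is needed.

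On the near region I would use the pointwise domination $(Ta)^+_\Om\lesi \mathcal M(Ta)$ together with the boundedness of $\mathcal M$ and of $T$ on $L^q_w$. H\"older's inequality gives
\[
\int_{8B_\Om}|(Ta)^+_\Om|^pw\le \Big(\int_{8B_\Om}|(Ta)^+_\Om|^qw\Big)^{p/q}w(8B_\Om)^{1-p/q},
\]
and then $\|(Ta)^+_\Om\|_{L^q_w}\lesi \|Ta\|_{L^q_w}\lesi\|a\|_{L^q_w}\le w(B)^{1/q-1/p}$, so the near contribution is $\lesi w(B)^{p/q-1}w(8B_\Om)^{1-p/q}\lesi 1$ by the doubling property of $w$. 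This step uses only Theorem \ref{thm1-Dirichlet}(i) and is routine.

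The heart of the matter is the far region. For $x\in\Om\setminus 8B_\Om$ and $0<t<\delta(x)/2$ the bump $\phi_t(x-\cdot)$ is supported in $B(x,t)\Subset\Om$, so writing the kernel of $T$ as $K(z,y)=\int_0^\vc \nabla_z^2 p_{t',\Delta_D}(z,y)\,dt'$ and integrating by parts to move the derivatives onto $\phi_t$ (no boundary terms arise since $B(x,t)\Subset\Om$), I obtain
\[
\phi_t\ast(Ta)(x)=\int_B\Big(\int_0^\vc\int_\Om (\nabla^2\phi_t)(x-z)\,p_{t',\Delta_D}(z,y)\,dz\,dt'\Big)a(y)\,dy.
\]
This is the key manoeuvre: it trades the hard-to-control $\nabla_z^2 p_{t',\Delta_D}$ for the explicit $\nabla^2\phi_t$ and exposes $p_{t',\Delta_D}(z,y)$, to which \eqref{Holder-Dirichlet} in the $y$-variable applies. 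For an interior atom ($4B\subset\Om$) I subtract the value at the centre using $\int_B a=0$ and bound the difference by \eqref{Holder-Dirichlet} with $y'=x_B$; for a boundary atom ($4B\cap\partial\Om\ne\emptyset$) I instead compare to a boundary point $y'\in\partial\Om\cap 4B$, where $p_{t',\Delta_D}(z,y')=0$ by the Dirichlet condition, so that $|p_{t',\Delta_D}(z,y)|\lesi(\delta(y)/\sq{t'})^\gamma\,\mu(B(z,\sq{t'}))^{-1}e^{-|z-y|^2/ct'}$ with $\delta(y)\lesi r_B$. In both cases, carrying out the $z$- and $t'$-integrations (the Gaussian forces $t'\gtrsim|x-x_B|^2$, while the small-$t'$ range where \eqref{Holder-Dirichlet} is unavailable is harmless because the Gaussian is then super-exponentially small) yields the uniform decay
\[
(Ta)^+_\Om(x)\lesi \f{r_B^\gamma}{|x-x_B|^{n+\gamma}}\,\|a\|_{L^1(B)},\qquad x\in\Om\setminus 8B_\Om.
\]

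Finally I would sum over the annuli $S_j(B_\Om)$. Using $\|a\|_{L^1(B)}\lesi |B|\,w(B)^{-1/p}$ (from property (ii) of the atom and the $A_q$ condition) together with the doubling bound $w(S_j(B_\Om))\lesi 2^{jn\sigma}w(B)$, the far contribution is controlled by $\sum_{j\ge 3}2^{-j[(n+\gamma)p-n\sigma]}$, which converges precisely because $w\in A_q$ with $q<r_0=\f{p(n+\gamma)}{n}$ forces the doubling exponent $\sigma<(n+\gamma)p/n$; this is exactly where the range $\f{n}{n+\gamma}<p\le1$ and the weight restriction enter. Combining the near and far estimates gives $\|(Ta)^+_\Om\|_{L^p_w(\Om)}\lesi 1$, hence the boundedness of $T$ on $H^p_{Mi,w}(\Om)$, and the concluding statement follows from $H^p_{z,w}(\Om)\hookrightarrow H^p_{Mi,w}(\Om)$. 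I expect the main obstacle to be the far-region kernel estimate---in particular, justifying the integration by parts and extracting $\gamma$-H\"older control in $y$ of the second-order kernel from \eqref{Holder-Dirichlet}, which is stated only for $p_{t',\Delta_D}$ itself; transferring the derivatives onto the test function $\phi_t$ is the device that makes this possible and simultaneously unifies the interior and boundary atom cases.
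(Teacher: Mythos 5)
Your overall strategy differs from the paper's, and the step you yourself flag as the main obstacle is a genuine gap, not just a technicality. After integrating by parts to move $\nabla_z^2$ onto $\phi_t$, you pick up the factor $\|\nabla^2\phi_t\|_{L^1}\sim t^{-2}$ while simultaneously giving up the $(t')^{-1}$ decay that $\nabla_z^2 p_{t',\Delta_D}$ would have contributed to the $dt'$ integral. If you then estimate $p_{t',\Delta_D}(z,y)-p_{t',\Delta_D}(z,y')$ by \eqref{Holder-Dirichlet} in absolute value, the far-region bound you obtain is of the order $t^{-2}\,r_B^\gamma\,|x-x_B|^{2-n-\gamma}\|a\|_{L^1}$, which blows up as $t\to 0$; the supremum over $0<t<\delta(x)/2$ defining $(Ta)^+_\Om(x)$ is therefore not controlled, and the claimed decay $r_B^\gamma|x-x_B|^{-n-\gamma}\|a\|_{L^1}$ does not follow. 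To recover the lost two powers of $t$ you would need to exploit the vanishing moments of $\nabla^2\phi$ by subtracting a first-order Taylor polynomial of $z\mapsto\int_0^\vc p_{t',\Delta_D}(z,y)\,dt'$ at $x$, i.e. quantitative $C^{1,\epsilon}$ regularity of the Green function in the \emph{first} variable --- but the hypotheses supply only $\gamma$-H\"older continuity of $p_{t,\Delta_D}$ in the \emph{second} variable plus the $L^2$-averaged bound \eqref{L2 gradient estimate}, neither of which yields such pointwise interior regularity. So the "key manoeuvre" does not close.

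The paper avoids this difficulty entirely by never estimating second derivatives pointwise in the far region. It factors the problem as: (a) Theorem \ref{thm1-Dirichlet}(ii) already gives $\nabla^2\Delta_D^{-1}:H^p_{\Delta_D,w}(\Om)\to H^p_{Mi,w}(\Om)$ (proved by showing that $\nabla^2\Delta_D^{-1}a$ is a fixed multiple of a $(p,q,w)_{Mi}$-atom for each operator-adapted atom $a=\Delta_D^Mb$, using only the support and cancellation of $b$ and the weighted $L^q$ bound); and (b) it then suffices to prove the embedding $H^p_{Mi,w}(\Om)\hookrightarrow H^p_{\Delta_D,w}(\Om)$, which by Theorem \ref{maximal function result} reduces to the uniform bound $\|\mathcal M_{\Delta_D}a\|_{L^p_w(\Om)}\le C$ for Miyachi atoms $a$, where $\mathcal M_{\Delta_D}a=\sup_{0<t<d_\Om^2}|e^{-t\Delta_D}a|$. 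That last estimate involves only $p_{t,\Delta_D}$ itself (no derivatives), so the hypothesis \eqref{Holder-Dirichlet} --- combined with the atom's cancellation when $4B\subset\Om$, or with the vanishing of $p_{t,\Delta_D}(x,\cdot)$ at a boundary point $y_0\in 4B\cap\partial\Om$ otherwise --- gives exactly the decay $r_B^\gamma|x-x_B|^{-n-\gamma}\|a\|_{L^1}$ you were aiming for, after which your annular summation and weight bookkeeping (which are correct) apply verbatim. If you want to salvage your write-up, replace the direct estimation of $(\nabla^2\Delta_D^{-1}a)^+_\Om$ by this two-step reduction; your near-region argument and the final deduction of the $H^p_{z,w}\to H^p_{Mi,w}$ statement can stay.
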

\begin{proof}
	From Theorem \ref{thm1-Dirichlet} in order to prove that $\nabla^2 \Delta_D^{-1}$ is bounded on $H^p_{Mi,w}(\Om)$,  it suffices to show that \[
	H^p_{Mi,w}(\Om)\hookrightarrow H^p_{\Delta_D, w}(\Om)
	\]
	for  all $\f{n}{n+1}<p\le 1$ and $w\in \bigcup_{1<r<p_0}A_r(\Rn)\cap RH_{(p_0/r)'}(\Rn)$.
	
	By Theorem \ref{maximal function result} it suffices to prove that there exists $C>0$ so that
	\[
	\|\mathcal M_{\Delta_D}a\|_{L^p_w(\Om)}\le C
	\]
	for every $(p, \vc,w)_{Mi}$-atom $a$ associated to a ball $B$, where
	\[
	\mathcal M_{\Delta_D}a(x)= \sup_{0<t<d_\Om^2}|e^{-t\Delta_D}a(x)|.
	\]
	Indeed, we have
	\[
	\|\mathcal M_{\Delta_D}a\|_{L^p_w(\Om)}\lesi \|\mathcal M_{\Delta_D}a\|_{L^p_w(4B)} +\|\mathcal M_{\Delta_D}a\|_{L^p_w(\Om\backslash 4B)}.
	\]
	Since $\mathcal M_{\Delta_D}a\lesi \mathcal{M}a$, we have
	\[
	\|\mathcal M_{\Delta_D}a\|_{L^p_w(4B)}\lesi w(4B)^{1/p}\|\mathcal M_{\Delta_D}a\|_{L^\vc}\lesi w(4B)^{1/p}\|a\|_{L^\vc}\lesi 1.
	\]
	For the second term we remark that
	\[
	\mathcal M_{\Delta_D}a(x)\le  \sup_{0<t<4r_B^2}|e^{-t\Delta_D}a(x)|+ \sup_{4r_B^2\le t<d_\Om^2}|e^{-t\Delta_D}a(x)|=:I_1(x)+I_2(x).
	\]
	By the Gaussian upper bound \eqref{GU-Dirichlet} we have
	\[
	\begin{aligned}
	I_1(x)&\lesi \sup_{0<t<4r_B^2}\int_B \f{1}{t^{n/2}}\exp\Big(-\f{|x-y|^2}{ct^2}\Big)|a(y)|dy\sim \sup_{0<t<4r_B^2}\int_B \f{1}{t^{n/2}}\exp\Big(-\f{|x-x_B|^2}{ct^2}\Big)|a(y)|dy\\
	&\lesi \|a\|_{L^1}\f{r_B^\gamma}{|x-x_B|^{n+\gamma}}.
	\end{aligned}
	\] 
	For the term $I_2(x)$ we consider two cases. If $4B\subset \Om$, then by the cancellation property $\int a = 0$ and \eqref{Holder-Dirichlet} we will come up with
	\[
	I_2(x)\lesi \|a\|_{L^1}\f{r_B^\gamma}{|x-x_B|^{n+\gamma}}.
	\]
	If $2B\subset \Om$ and $4B\cap \Om^c\ne \emptyset$, then we have
	\[
	\begin{aligned}
	I_2(x)&= \sup_{0<t<4r_B^2}\Big|\int_B \left(p_{t,\Delta_D}(x,y)-p_{t,\Delta_D}(x,y_0)\right) a(y)dy\Big|
	\end{aligned}
	\]
	where $y_0$ is any point in $4B\cap \partial\Om$.
	
	Using \eqref{Holder-Dirichlet} we also obtain
	\[
	I_2(x)\lesi \|a\|_{L^1}\f{r_B^\gamma}{|x-x_B|^{n+\gamma}}.
	\]
	
	Taking the estimates of $I_1$ and $I_2$ into account we find that
	\[
	\mathcal M_{\Delta_D}a(x)\lesi \|a\|_{L^1}\f{r_B^\gamma}{|x-x_B|^{n+\gamma}}\lesi \f{r_B^\gamma}{|x-x_B|^{n+\gamma}} |B|w(B)^{-1/p}.
	\] 
	This implies
	\[
	\|\mathcal M_{\Delta_D}a\|_{L^p_w(\Om\backslash 4B)}\lesi 1
	\]
	as long as $w\in \bigcup_{1<r<r_0}A_r(\Rn)\cap RH_{(p_0/r)'}(\Rn)$.
\end{proof}
We can obtain regularity estimates for the Dirichlet Green operator  for a larger range of $p$ if we have stronger assumptions on the derivatives of heat kernels.
\begin{thm}\label{thm3-Dirichlet}
	Suppose that the second derivative of the Green function $\nabla^2 \Delta_D^{-1}$ is bounded on $L^{p_0}(X)$ for some $p_0\ge 2$. Assume that the Dirichlet Laplacian $\Delta_D$ satisfies \eqref{L2 gradient estimate}, and assume that for any multi-index $\alpha$, there exist  $C, c>0$ so that
	\begin{equation}\label{n derivative Holder-Dirichlet}
	|\partial_x^\alpha p_{t,\Delta_D}(x,y)|\le \f{C}{t^{|\alpha|}|B_\Omega(x,\sqrt{t})|}\exp\Big(-\f{|x-y|^2}{ct}\Big)
	\end{equation}
	for all $0<t<{\rm diam}\,\Om$ and $x,y\in \Omega$. 
	
	Then the second derivative of the Green function $\nabla^2 \Delta_D^{-1}$ is bounded  $H^p_{z,w}(\Om)$ into $H^p_{Mi,w}(\Om)$ for all $0<p\le 1$ and $w\in \bigcup_{1<r<p_0}A_r(\Rn)\cap RH_{(p_0/r)'}(\Rn)$.
\end{thm}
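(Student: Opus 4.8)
The plan is to factor the operator through the embedding $H^p_{z,w}(\Om)\hookrightarrow H^p_{\Delta_D,w}(\Om)$. Indeed, Theorem~\ref{thm1-Dirichlet}(ii) already gives that $\nabla^2\Delta_D^{-1}$ is bounded from $H^p_{\Delta_D,w}(\Om)$ into $H^p_{Mi,w}(\Om)$ for all $0<p\le1$ and the stated weights, using only \eqref{L2 gradient estimate} and the $L^{p_0}$-boundedness. Hence it suffices to prove the continuous embedding $H^p_{z,w}(\Om)\hookrightarrow H^p_{\Delta_D,w}(\Om)$ for $0<p\le1$, and this is exactly where the all-order kernel bound \eqref{n derivative Holder-Dirichlet} enters, to remove the lower restriction on $p$ present in Theorem~\ref{thm2-Dirichlet}. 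Since $H^{p,q}_{at,w}(\Om)\equiv H^p_{z,w}(\Om)$ and, by Theorem~\ref{maximal function result}, $\|f\|_{H^p_{\Delta_D,w}(\Om)}\sim\big\|\sup_{0<t<d_\Om^2}|e^{-t\Delta_D}f|\big\|_{L^p_w(\Om)}$, the whole matter reduces to the uniform bound $\|\mathcal M_{\Delta_D}a\|_{L^p_w(\Om)}\lesi1$ for every $(p,q,w)_\Om$-atom $a$, where $\mathcal M_{\Delta_D}a(x)=\sup_{0<t<d_\Om^2}|e^{-t\Delta_D}a(x)|$ and $q\in(q_w,p_0)$ is fixed so that $w\in A_q(\Rn)\cap RH_{(p_0/q)'}(\Rn)$.

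Fix such an atom $a$ supported in a ball $B=B(x_B,r_B)$ and split $\|\mathcal M_{\Delta_D}a\|_{L^p_w(\Om)}^p$ over $cB\cap\Om$ and $\Om\setminus cB$ for a fixed large $c$. On the local piece I would use the Gaussian bound \eqref{GU-Dirichlet} to dominate $\mathcal M_{\Delta_D}a\lesi\mathcal Ma$, then H\"older's inequality together with the $L^q_w$-boundedness of $\mathcal M$ (valid since $w\in A_q$) and the normalization $\|a\|_{L^q_w(\Om)}\le w(B)^{1/q-1/p}$ to get $\|\mathcal M_{\Delta_D}a\|_{L^p_w(cB\cap\Om)}\lesi w(B)^{1/p-1/q}\|a\|_{L^q_w(\Om)}\lesi1$. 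The same computation run on all of $\Om$ disposes of the exceptional constant atom with $\|a\|_{L^q_w(\Om)}\le w(\Om)^{1/q-1/p}$ in the bounded case, where $w(\Om)<\vc$. Along the way I record that the $A_q$-condition gives $\|a\|_{L^1(\Om)}\lesi|B|\,w(B)^{-1/p}$.

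The global piece is the heart of the argument. For $x\in\Om\setminus cB$ and $0<t<d_\Om^2$, write $e^{-t\Delta_D}a(x)=\int_B p_{t,\Delta_D}(x,y)a(y)\,dy$ and subtract the degree-$N$ Taylor polynomial of $y\mapsto p_{t,\Delta_D}(x,y)$ at $x_B$, where $N=\lfloor n(q_w/p-1)\rfloor$; the moment conditions of $a$ annihilate this polynomial, leaving
\[
|e^{-t\Delta_D}a(x)|\lesi r_B^{N+1}\|a\|_{L^1(\Om)}\sup_{\xi\in B,\ |\beta|=N+1}|\partial_\xi^\beta p_{t,\Delta_D}(x,\xi)|.
\]
By the symmetry of the heat kernel and \eqref{n derivative Holder-Dirichlet}, the supremum is $\lesi t^{-(n+N+1)/2}\exp(-|x-x_B|^2/ct)$, using $|B_\Om(x,\sq t)|^{-1}\lesi t^{-n/2}$ and $|x-\xi|\sim|x-x_B|=:R$ for $\xi\in B$. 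Taking the supremum over $0<t<d_\Om^2$ (the relevant maximum sits at $t\sim R^2\le d_\Om^2$) yields $\mathcal M_{\Delta_D}a(x)\lesi r_B^{N+1}\|a\|_{L^1(\Om)}\,R^{-(n+N+1)}\lesi r_B^{N+1}|B|\,w(B)^{-1/p}|x-x_B|^{-(n+N+1)}$. Finally I would sum over the annuli $S_j(B)$, using $w(2^{j+1}B)\lesi2^{jnr}w(B)$ for some $r$ with $q_w<r<(n+N+1)p/n$ and $w\in A_r(\Rn)$, which is what makes $\|\mathcal M_{\Delta_D}a\|_{L^p_w(\Om\setminus cB)}\lesi1$.

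The main obstacle, and the whole point of the stronger hypothesis, is the decay exponent. Because atoms of $H^p_{z,w}(\Om)$ carry the full set of vanishing moments up to order $N=\lfloor n(q_w/p-1)\rfloor$ and \eqref{n derivative Holder-Dirichlet} supplies matching kernel smoothness of every order, one gains the decay $|x-x_B|^{-(n+N+1)}$ with $n+N+1>nq_w/p$; this strict inequality is exactly the condition rendering the annular sum $\sum_j2^{j(nr-(n+N+1)p)}$ convergent for a choice of $r$ close to $q_w$, and it holds for the whole range $0<p\le1$. This is precisely the gain over Theorem~\ref{thm2-Dirichlet}, where only a single H\"older exponent $\gamma$ is available and the Miyachi atoms lack interior cancellation, forcing the restriction $p>n/(n+\gamma)$. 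A secondary point to watch is the unbounded case, where the $t$-supremum runs over all $t>0$: here the Gaussian factor keeps the optimization in $t$ uniform, and the measure lower bound $|B_\Om(x,\sq t)|\gtrsim t^{n/2}$ (valid for the domains under consideration) keeps the estimate under control.
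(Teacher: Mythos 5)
Your proposal follows the same route as the paper: reduce everything to the uniform bound $\|\mathcal M_{\Delta_D}a\|_{L^p_w(\Om)}\lesssim 1$ over atoms of $H^p_{z,w}(\Om)$, invoke Theorem~\ref{maximal function result} to obtain the embedding $H^p_{z,w}(\Om)\hookrightarrow H^p_{\Delta_D,w}(\Om)$, and then conclude via Theorem~\ref{thm1-Dirichlet}(ii). The paper declares the atom estimate standard and omits it; your Taylor-expansion argument exploiting the full vanishing moments of the $(p,q,w)_\Om$-atoms together with \eqref{n derivative Holder-Dirichlet} is the intended (and correct) way to carry it out for the whole range $0<p\le 1$.
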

\begin{proof}
By a similar argument to that of the proof of Theorem \ref{thm2-Dirichlet} with minor modifications, we can show that for all $0<p\le 1$ and $w\in \bigcup_{1<r<p_0}A_r(\Rn)\cap RH_{(p_0/r)'}(\Rn)$ there exists $C>0$ such that
\[
\|\mathcal M_{\Delta_D}a\|_{L^p_w(\Om)}\le C
\]
for every $(p, \vc,w)_{Mi}$-atom $a$ associated to a ball $B$.

This, along with Theorem \ref{maximal function result}, implies that $H^p_{z,w}(\Om) \subset H^p_{\Delta_D,w}(\Om)$. Hence, Theorem \ref{thm3-Dirichlet} follows immediately from Theorem \ref{thm1-Dirichlet}.
\end{proof}

Note that in Theorems \ref{thm1-Dirichlet}, \ref{thm2-Dirichlet} and \ref{thm3-Dirichlet},  condition \eqref{L2 gradient estimate} is only used to obtain the weighted estimates. Hence,  in the unweighted case when $w\equiv 1$, condition \eqref{L2 gradient estimate} can be removed. More precisely, we have the following result.  
\begin{thm}\label{thm1-Dirichlet-unweighted}
	Suppose the second derivative of the Green function $\nabla^2 \Delta_D^{-1}$ is bounded on $L^{p_0}(X)$ for some $p_0\ge 2$. Then we have:
	\begin{enumerate}[{\rm (i)}]
		\item The operator $\nabla^2 \Delta_D^{-1}$ is bounded from $H^p_{\Delta_D}(\Om)$ into $H^p_{Mi}(\Om)$ for all $0<p\le 1$.
		\item If \eqref{Holder-Dirichlet} is satisfied, then  $\nabla^2 \Delta_D^{-1}$ is bounded on $H^p_{Mi}(\Om)$ for all $\f{n}{n+\gamma}<p\le 1$. Hence, $\nabla^2 \Delta_D^{-1}$ is bounded from  $H^p_{z}(\Om)$ into $H^p_{Mi}(\Om)$ for all $\f{n}{n+\gamma}<p\le 1$.
		\item If \eqref{n derivative Holder-Dirichlet} is satisfied, then  $\nabla^2 \Delta_D^{-1}$ is bounded from  $H^p_{z}(\Om)$ into $H^p_{Mi}(\Om)$ for all $0<p\le 1$.
	\end{enumerate}
\end{thm}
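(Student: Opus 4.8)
The plan is to reuse, essentially verbatim, the architecture of the proofs of Theorems \ref{thm1-Dirichlet}, \ref{thm2-Dirichlet} and \ref{thm3-Dirichlet}, specialized to the weight $w\equiv 1$ and to the exponent $q=p_0$, while observing that the gradient bound \eqref{L2 gradient estimate} enters those proofs at exactly one place: the interpolation producing the $L^{q_0}$ off-diagonal estimate for $q_0\in(1,p_0)$, which was needed only to feed the weighted singular-integral criterion of Theorem \ref{Maintheorem-singularIntegral}. Here the $L^{p_0}$-boundedness of $\nabla^2\Delta_D^{-1}$ is assumed outright and, since $w\equiv 1$ forces $q_w=1$, every relevant (unweighted) atom may be taken in the $L^{p_0}$-norm; thus no interpolation, and hence no use of \eqref{L2 gradient estimate}, is required.

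For part (i), I would first invoke the unweighted maximal-versus-atomic equivalence (Theorem \ref{maximal function result}, or equivalently \cite{BDK2}) to reduce the claim to showing that $\nabla^2\Delta_D^{-1}$ maps each $(\Delta_D,p,p_0,M)$-atom $a=\Delta_D^Mb$, with $M>\tfrac{n}{2}\big(\tfrac1p-1\big)$, to a fixed multiple of a $(p,p_0)_{Mi}$-atom. I would then run the same three-case analysis as in the proof of Theorem \ref{thm1-Dirichlet}(ii). The support and vanishing-moment properties (Cases~1 and~2) follow exactly as there by repeated integration by parts, culminating in \eqref{eq-cancelation}; these steps are purely algebraic and never touch \eqref{L2 gradient estimate}. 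The only size estimate needed, $\|\nabla^2\Delta_D^{-1}a\|_{L^{p_0}(\Om)}\lesi\|a\|_{L^{p_0}(\Om)}\le|B|^{1/p_0-1/p}$, is now immediate from the standing hypothesis, replacing the weighted bound of Theorem \ref{thm1-Dirichlet}(i); Case~3 (bounded $\Om$, $a=|\Om|^{-1/p}\chi_\Om$) is reduced to Case~2 as before.

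For parts (ii) and (iii), by part (i) it suffices to establish the unweighted embeddings $H^p_{Mi}(\Om)\hookrightarrow H^p_{\Delta_D}(\Om)$ (for (ii), in the range $\tfrac{n}{n+\gamma}<p\le1$) and $H^p_{z}(\Om)\hookrightarrow H^p_{\Delta_D}(\Om)$ (for (iii), for all $0<p\le1$). Through Theorem \ref{maximal function result} these reduce to the uniform estimate $\|\mathcal M_{\Delta_D}a\|_{L^p(\Om)}\lesi 1$ over $(p,\infty)_{Mi}$-atoms $a$, which is precisely the content of the proofs of Theorems \ref{thm2-Dirichlet} and \ref{thm3-Dirichlet} with $w\equiv1$. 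One splits $\mathcal M_{\Delta_D}a$ across $4B$ and $\Om\setminus 4B$; the local part is dominated by the Hardy--Littlewood maximal function, while on $\Om\setminus 4B$ the Gaussian bound \eqref{GU-Dirichlet} together with either the H\"older regularity \eqref{Holder-Dirichlet} (yielding the restricted range through the decay $r_B^\gamma|x-x_B|^{-n-\gamma}$) or the stronger derivative bounds \eqref{n derivative Holder-Dirichlet} (yielding the full range) supplies the required pointwise decay. None of these estimates invokes \eqref{L2 gradient estimate}.

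The only point demanding genuine care, rather than transcription, is confirming that the $L^{p_0}$ off-diagonal decay \eqref{Lp0 gradient estimate} is a consequence of the $L^{p_0}$-boundedness hypothesis alone. This follows from $\|\nabla_y^2 p_{t,\Delta_D}(\cdot,z)\|_{L^{p_0}(\Om)}\lesi\|\Delta_D p_{t,\Delta_D}(\cdot,z)\|_{L^{p_0}(\Om)}$, the defining boundedness of $\nabla^2\Delta_D^{-1}$, combined with the analyticity of the semigroup and the Gaussian bound \eqref{GU-Dirichlet}, which give $\|\Delta_D p_{t,\Delta_D}(\cdot,z)\|_{L^{p_0}(\Om)}\lesi t^{n/(2p_0)-n/2-1}$. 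Accordingly, I expect the main (and modest) obstacle to be bookkeeping: verifying that \eqref{L2 gradient estimate} occurs in the earlier arguments solely inside the interpolation step feeding Theorem \ref{Maintheorem-singularIntegral}, so that dropping it is legitimate once the target atoms are measured in the $L^{p_0}$-norm.
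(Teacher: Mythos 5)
Your proposal is correct and follows exactly the route the paper intends: the paper gives no separate proof of Theorem \ref{thm1-Dirichlet-unweighted}, justifying it only by the remark that condition \eqref{L2 gradient estimate} enters Theorems \ref{thm1-Dirichlet}--\ref{thm3-Dirichlet} solely through the weighted machinery (the interpolation feeding Theorem \ref{Maintheorem-singularIntegral} and the resulting $L^q_w$ size bounds on atom images), and your bookkeeping verifying this --- taking $q=p_0$ for the unweighted atoms so that the size estimate is the standing $L^{p_0}$ hypothesis, while the support, moment, and maximal-function estimates are untouched --- is precisely the elaboration of that remark.
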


\subsection{Neumann Laplacian problems}
Let $\Om$ be a open connected domain in $\Rn$. Denote by $\Delta_N$ the Neumann Laplacian define on $\mathcal D(\Delta_N):=\{u\in W^{1,2}(\Om): \Delta u\in L^2(\Om)\}$ such that
\begin{equation}\label{eq-formula LN}
\langle \Delta_Nf, g \rangle =\int_{\Omega} \nabla f\cdot \nabla g, \ \ \ \forall f\in \mathcal D(\Delta_N), \ \ g\in W^{1,2}(\Om).
\end{equation}

Denote by $p_{t,\Delta_N}(x,y)$ the kernel of $e^{-t\Delta_N}$. We assume that there exist $C,c>0$ so that  
\begin{equation}
\label{GU-Neumann}
0\le p_{t,\Delta_N}(x,y)\le \f{C}{|B_\Om(x,\sqrt{t})|}\exp\Big(-\f{|x-y|^2}{ct}\Big)
\end{equation}
for all $t>0$ and $x,y\in \Omega$, where $B_\Om(x,\sqrt{t})=B(x,\sqrt{t})\cap \Om$. It was proved in \cite{Da} that the Guassian upper bound \eqref{GU-Neumann} is satisfied if $\Om$ if Ω has the extension property.

We now consider the following condition: There exist a constant $\beta>0$ and $C>0$ so that
\begin{equation}\label{L2 gradient estimate-Neumann}
\Big(\int_{x\in \Om} |\nabla_x^2 p_{t,\Delta_N}(x,y)|^2e^{\beta \f{|x-y|^2}{t}}dx\Big)^{1/2} \le C_\beta t^{-2}|B_\Om(y,\sqrt{t})|^{-1}, \ \ \ \forall y\in \Om, t>0.
\end{equation}
Similarly to the condition \eqref{L2 gradient estimate}, it can be seen that the estimates \eqref{L2 gradient estimate-Neumann} is satisfied if  $\nabla_x^2 p_{t,\Delta_N}$ has a Gaussian upper bound. In our applications, we will show that the estimate \eqref{L2 gradient estimate-Neumann} is satisfied if $\Om$ is one of the following domains:
\begin{enumerate}[(i)]
	\item a bounded convex domain;
	\item a convex domain above a Lipschitz graph;
	\item the upper-half space.
\end{enumerate}

Arguing similarly to the proof of Theorem \ref{thm1-Dirichlet} we have:
\begin{thm}\label{thm1-Neumann}
	Suppose that the second derivative of the Green function $\nabla^2 \Delta_N^{-1}$ is bounded on $L^{p_0}(X)$ for some $p_0\ge 2$. Suppose that the Neumann Laplacian $\Delta_N$ satisfies \eqref{L2 gradient estimate-Neumann}. Then we have
	\begin{enumerate}[{\rm (i)}]
		\item The second derivative of the Green function $\nabla^2 \Delta_N^{-1}$ is bounded on $L^p_w(\Om)$ for all $1<p<p_0$ and $w\in A_p(\Rn)\cap RH_{(p_0/p)'}$, and is bounded from $L^1_w(\Om)$ into $L^{1,\vc}_w(\Om)$ for $w\in A_1\cap RH_{p_0'}$.
		\item The second derivative of the Green function $\nabla^2 \Delta_N^{-1}$ is bounded from $H^p_{\Delta_N,w}(\Om)$ into $H^p_{Mi,w}(\Om)$ for all $0<p\le 1$ and $w\in \bigcup_{1<r<p_0}A_r(\Rn)\cap RH_{(p_0/r)'}(\Rn)$.
	\end{enumerate}
\end{thm}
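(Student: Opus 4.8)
The statement to prove is Theorem \ref{thm1-Neumann}, which is the Neumann analogue of Theorem \ref{thm1-Dirichlet}. The paper itself signals the strategy by saying ``Arguing similarly to the proof of Theorem \ref{thm1-Dirichlet} we have.'' So my plan is to mirror the Dirichlet proof line by line, substituting $\Delta_N$ for $\Delta_D$ and invoking the Neumann hypotheses \eqref{GU-Neumann} and \eqref{L2 gradient estimate-Neumann} wherever the Dirichlet proof used \eqref{GU-Dirichlet} and \eqref{L2 gradient estimate}.

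\begin{proof}[Proof of Theorem \ref{thm1-Neumann}:]
The proof follows the same scheme as that of Theorem \ref{thm1-Dirichlet}, so we only indicate the necessary changes.

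For part (i), fix $w\in A_p(\Rn)\cap RH_{(p_0/p)'}$ and choose $q_0\in (1,p_0)$ with $w\in A_p(\Rn)\cap RH_{(q_0/p)'}(\Rn)$. For $m>n/2$ and a ball $B$ with radius $r_B$, we write
\[
\nabla^2\Delta_N^{-1}(I-e^{-r_B^2\Delta_N})^m=\int_{0}^{\infty}g_{r_B,m}(t)\nabla^2e^{-t\Delta_N}\,dt,
\]
where $g_{r_B,m}$ satisfies \eqref{g-function}, exactly as in \eqref{Kmr}. Writing $K_{m,r_B}(y,z)=\int_0^\infty g_{r_B,m}(t)\nabla^2 p_{t,\Delta_N}(y,z)\,dt$, the boundedness of $\nabla^2\Delta_N^{-1}$ on $L^{p_0}$ gives, by the same computation as in \eqref{Lp0 gradient estimate},
\[
\Big(\int_{S_j(B)}|\nabla_y^2 p_{t,\Delta_N}(y,z)|^{p_0}dy\Big)^{1/p_0}\lesi |2^jB_\Om|^{1/p_0-1}t^{-1}\Big(\f{4^jr_B^2}{t}\Big)^{\f{n}{2}-\f{n}{2p_0}},
\]
while the Neumann weighted gradient bound \eqref{L2 gradient estimate-Neumann} yields the $L^2$ estimate with the exponential factor $e^{-\beta 4^jr_B^2/(2t)}$. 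Interpolating these two bounds and integrating in $t$ against $g_{r_B,m}$, we obtain via \eqref{g-function} that
\[
\Big(\int_{S_j(B_\Om)}|K_{m,r_B}(y,z)|^{q_0}dy\Big)^{1/q_0}\lesi 2^{-2mj}|2^jB_\Om|^{1/q_0-1},\qquad j\ge 2.
\]
Choosing $m$ large enough that $2m>n$, Theorem \ref{Maintheorem-singularIntegral} gives both the $L^p_w$ boundedness for $1<p<p_0$ and the weak type $(1,1)$ endpoint, which is part (i).

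For part (ii), fix $w\in \bigcup_{1<r<p_0}A_r(\Rn)\cap RH_{(p_0/r)'}(\Rn)$ and choose $q\in(1,p_0)$ with $w\in A_q\cap RH_{(p_0/q)'}$. Let $a=\Delta_N^M b$ be an $(\Delta_N,p,q,w,M)$-atom associated to a ball $B$, with $M>\f{1}{2}\lfloor n(q_w/p-1)\rfloor$. We claim $\nabla^2\Delta_N^{-1}a$ is a $(p,q,w)_{Mi}$-atom, and we verify this in the same three cases as for the Dirichlet problem. When $4B\subset\Om$, we have $\nabla^2\Delta_N^{-1}a=\nabla^2\Delta_N^{M-1}b$, so the support lies in $B$, the size bound follows from part (i), and the moment conditions follow by the same repeated integration by parts as in \eqref{eq-cancelation}; here the one point requiring attention is that the test function $\nabla^2[x^\alpha\psi(x)]$ must lie in the form domain of $\Delta_N$, which for the Neumann Laplacian is all of $W^{1,2}(\Om)$ rather than $W^{1,2}_0(\Om)$, so the integration by parts goes through without the vanishing trace needed in the Dirichlet case. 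When $4B\cap\Om^c\ne\emptyset$, we set $\tilde a=\nabla^2\Delta_N^{-1}a$, verify the support and size conditions, and show $\|\tilde a^+_\Om\|_{L^p_w((8B_\Om)^c)}=0$ using the compact support of $\phi$ and the geometric fact $\delta(y)<4r_B$ on $B_\Om$, reducing to $\|\tilde a^+_\Om\|_{L^p_w(8B_\Om)}$, which is controlled by H\"older's inequality together with $\tilde a^+_\Om\lesi\mathcal M\tilde a$ and the $L^q_w$-boundedness of $\mathcal M$. The bounded-domain case $a=w(\Om)^{-1/p}\chi_\Om$ is handled as in Case 2. The only genuine difference from the Dirichlet argument is the change of boundary condition in the integration by parts step; since the Neumann form domain is $W^{1,2}(\Om)$ this step is in fact easier, and the remaining estimates are identical because they rely only on the Gaussian bound \eqref{GU-Neumann} and part (i), both available here. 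This completes the proof.
\end{proof}
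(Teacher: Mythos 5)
Your proposal is correct and follows exactly the route the paper intends: the paper's own ``proof'' of Theorem \ref{thm1-Neumann} is the single remark ``Arguing similarly to the proof of Theorem \ref{thm1-Dirichlet} we have,'' and you have carried out precisely that transposition, substituting \eqref{GU-Neumann} and \eqref{L2 gradient estimate-Neumann} for their Dirichlet counterparts and correctly noting that the only point needing care --- the integration by parts in the moment-condition step --- still goes through because the compactly supported test function $\nabla^2[x^\alpha\psi]$ lies in the Neumann form domain $W^{1,2}(\Om)$. No gaps.
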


In the particular case when the kernel $p_{t,\Delta_N}(x,y)$ satisfies the H\"older continuity \eqref{Holder-Neumann} below, Theorem \ref{thm1-Neumann} deduces to the boundedness of $\nabla^2 \Delta_N^{-1}$ from $H^p_{z,w}(\Om)$ into $H^p_{Mi,w}(\Om)$ for certain $p\le 1$. We have:
\begin{thm}\label{thm2-Neumann}
	Suppose that the second derivative of the Green function $\nabla^2 \Delta_N^{-1}$ is bounded on $L^{p_0}(X)$ for some $p_0\ge 2$. Assume that the Neumann Laplacian $\Delta_N$ satisfies \eqref{L2 gradient estimate}, and  there exist $\gamma\in (0,1]$ and $C,c>0$ so that
	\begin{equation}\label{Holder-Neumann}
	|p_{t,\Delta_N}(x,y)-p_{t,\Delta_N}(x,y')|\le \Big(\f{|y-y'|}{\sqrt{t}}\Big)^\gamma\f{C}{\mu(B_\Omega(x,\sqrt{t}))}\exp\Big(-\f{|x-y|^2}{ct}\Big)
	\end{equation} 
	for all $0<t<d_\Om:={\rm diam}\,\Om$ and  $x,y,y'\in \Om$ so that $|y-y'|<\sqrt{t}/2$. 
	
	Then the second derivative of the Green function $\nabla^2 \Delta_N^{-1}$ is bounded from $H^p_{z,w}(\Om)$ into $H^p_{Mi,w}(\Om)$ for all $\f{n}{n+\gamma}<p\le 1$ and $w\in \bigcup_{1<r<r_0}A_r(\Rn)\cap RH_{(p_0/r)'}(\Rn)$ where $r_0=\f{p(n+\gamma)}{n}$. 
\end{thm}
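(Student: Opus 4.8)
The plan is to follow the strategy of the proof of Theorem~\ref{thm2-Dirichlet} almost verbatim, the Neumann problem being in fact slightly cleaner at the boundary. By Theorem~\ref{thm1-Neumann}(ii) the operator $\nabla^2\Delta_N^{-1}$ is already bounded from $H^p_{\Delta_N,w}(\Om)$ into $H^p_{Mi,w}(\Om)$ for the relevant range of weights, so the whole statement reduces to establishing the continuous embedding
\[
H^p_{z,w}(\Om)\hookrightarrow H^p_{\Delta_N,w}(\Om),\qquad \tfrac{n}{n+\gamma}<p\le1,\ w\in\bigcup_{1<r<r_0}A_r(\Rn)\cap RH_{(p_0/r)'}(\Rn).
\]
Invoking the maximal function characterization from Theorem~\ref{maximal function result}, this embedding in turn follows once we produce a constant $C>0$ with $\|\mathcal{M}_{\Delta_N}a\|_{L^p_w(\Om)}\le C$ for every $(p,\vc,w)_\Om$-atom $a$ associated to a ball $B=B(x_B,r_B)$, where $\mathcal{M}_{\Delta_N}a(x)=\sup_{0<t<d_\Om^2}|e^{-t\Delta_N}a(x)|$. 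The decisive structural point is that every such atom carries the cancellation $\int_B a=0$ (and higher moments), which is precisely what the Neumann kernel needs, since --- unlike the Dirichlet kernel --- $p_{t,\Delta_N}(\cdot,\cdot)$ does \emph{not} vanish on $\partial\Om$; this is the reason one only obtains boundedness \emph{from} $H^p_{z,w}$ rather than on all of $H^p_{Mi,w}$.

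Second, I would split $\|\mathcal{M}_{\Delta_N}a\|_{L^p_w(\Om)}^p$ into the contributions of $4B$ and of $\Om\setminus 4B$. On $4B$ the Gaussian bound \eqref{GU-Neumann} gives $\|\mathcal{M}_{\Delta_N}a\|_{L^\vc(\Om)}\lesi\|a\|_{L^\vc(\Om)}$; combined with the atomic size bound $\|a\|_{L^\vc(\Om)}\le w(B)^{-1/p}$ and the doubling of $w$, Hölder's inequality over $4B$ yields $\|\mathcal{M}_{\Delta_N}a\|_{L^p_w(4B)}\le w(4B)^{1/p}\|a\|_{L^\vc}\lesi 1$, exactly as in the local term of Theorem~\ref{thm2-Dirichlet}.

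Third, on $\Om\setminus 4B$ I would cut the time variable at $t=4r_B^2$ and write $\mathcal{M}_{\Delta_N}a\le I_1+I_2$ with $I_1=\sup_{0<t<4r_B^2}|e^{-t\Delta_N}a|$ and $I_2=\sup_{4r_B^2\le t<d_\Om^2}|e^{-t\Delta_N}a|$. For $I_1$ the exponential off-diagonal factor in \eqref{GU-Neumann} dominates, and since $|x-x_B|\ge 3r_B$ the rapid decay converts into $I_1(x)\lesi\|a\|_{L^1}\,r_B^\gamma|x-x_B|^{-n-\gamma}$. For $I_2$, where no small-time gain is available, I would instead use the vanishing moment $\int_B a=0$ to replace the kernel by its difference $p_{t,\Delta_N}(x,y)-p_{t,\Delta_N}(x,x_B)$ and then apply the Hölder regularity \eqref{Holder-Neumann}, which produces the same bound $I_2(x)\lesi\|a\|_{L^1}\,r_B^\gamma|x-x_B|^{-n-\gamma}$. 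Combining the two and using $\|a\|_{L^1}\lesi|B|\,w(B)^{-1/p}$ gives, for $x\in\Om\setminus 4B$,
\[
\mathcal{M}_{\Delta_N}a(x)\lesi \f{r_B^\gamma}{|x-x_B|^{n+\gamma}}\,|B|\,w(B)^{-1/p}.
\]

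Finally, I would integrate the $p$-th power of this tail estimate against $w$ over $\Om\setminus 4B$, decomposing into the annuli $S_j(B)$ and summing over $j\ge2$. On $S_j(B)$ one has $|x-x_B|\sim 2^jr_B$, so the integrand is of size $2^{-jp(n+\gamma)}r_B^{-np}$ and the $w$-measure contributes $w(2^jB)$; together with the prefactor $|B|^p w(B)^{-1}$ the $j$-th term is comparable to $2^{-jp(n+\gamma)}\,w(2^jB)/w(B)$. The main obstacle --- and the only genuinely weighted step --- is to sum this series: since $w\in A_r$ for some $r<r_0=\f{p(n+\gamma)}{n}$, the $A_r$ doubling bound $w(2^jB)/w(B)\lesi 2^{jnr}$ reduces the tail to a geometric series with ratio $2^{-[p(n+\gamma)-nr]}<1$, which converges precisely because $nr<p(n+\gamma)$, equivalently $r<r_0$. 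This is the same weighted bookkeeping as in Theorem~\ref{thm2-Dirichlet}; once it is in place it yields the uniform estimate $\|\mathcal{M}_{\Delta_N}a\|_{L^p_w(\Om)}\le C$, which establishes the embedding and hence, via Theorem~\ref{thm1-Neumann}(ii), the theorem.
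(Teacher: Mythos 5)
Your proposal is correct and follows exactly the route of the paper's own proof: reduce via Theorem~\ref{thm1-Neumann}(ii) to the embedding $H^p_{z,w}(\Om)\hookrightarrow H^p_{\Delta_N,w}(\Om)$, then verify the uniform bound $\|\mathcal M_{\Delta_N}a\|_{L^p_w(\Om)}\le C$ for atoms and invoke Theorem~\ref{maximal function result}. The paper merely asserts that the atomic maximal bound ``can be verified''; your local/far and small-time/large-time splitting with the cancellation of the $(p,q,w)_\Om$-atom, the H\"older bound \eqref{Holder-Neumann}, and the $A_r$ annulus summation giving the constraint $r<r_0=\f{p(n+\gamma)}{n}$ is precisely the omitted argument, mirroring the proof of Theorem~\ref{thm2-Dirichlet}.
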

\begin{proof}
	From Theorem \ref{thm1-Neumann} in order to prove that $\nabla^2 \Delta_N^{-1}$ is bounded from $H^p_{z,w}(\Om)$ into $H^p_{Mi,w}(\Om)$,  it suffices to show that \[
	H^p_{z,w}(\Om)\hookrightarrow H^p_{\Delta_N, w}(\Om)
	\]
	for  all $\f{n}{n+\gamma}<p\le 1$ and $w\in \bigcup_{1<r<p_0}A_r(\Rn)\cap RH_{(p_0/r)'}(\Rn)$.
	
	To do this, we note that it can be verified that there exists $C>0$ so that 
	\[
	\|\mathcal M_{\Delta_N}a\|_{L^p_w(\Om)}\le C
	\]
	for every $(p,q,w)_\Om$-atom $a$ associated to a ball $B$, where
	\[
	\mathcal M_{\Delta_N}a(x)= \sup_{0<t<d_\Om^2}|e^{-t\Delta_N}a(x)|.
	\]
	It follows that 
	\[
	H^p_{z,w}(\Om)\hookrightarrow H^p_{\Delta_N, w, \max}(\Om).
	\]
	This, in combination with  Theorem \ref{maximal function result}, yields 
	\[
	H^p_{z,w}(\Om)\hookrightarrow H^p_{\Delta_N, w}(\Om)
	\]
	which completes our proof.
\end{proof}

\begin{thm}\label{thm3-Neumann}
	Suppose that the second derivative of the Green function $\nabla^2 \Delta_N^{-1}$ is bounded on $L^{p_0}(X)$ for some $p_0\ge 2$. Assume that the Neumann Laplacian $\Delta_N$ satisfies \eqref{L2 gradient estimate}, and assume that for any multi-index $\alpha$, there exist  $C, c>0$ so that
	\begin{equation}\label{n derivative Holder-Neumann}
	|\partial_x^\alpha p_{t,\Delta_N}(x,y)|\le \f{C}{t^{|\alpha|}|B_\Omega(x,\sqrt{t})|}\exp\Big(-\f{|x-y|^2}{ct}\Big)
	\end{equation}
	for all $0<t<{\rm diam}\,\Om$ and $x,y\in \Omega$. 
	
	Then the second derivative of the Green function $\nabla^2 \Delta_N^{-1}$ is bounded  $H^p_{z,w}(\Om)$ into $H^p_{Mi,w}(\Om)$ for all $0<p\le 1$ and $w\in \bigcup_{1<r<p_0}A_r(\Rn)\cap RH_{(p_0/r)'}(\Rn)$.
\end{thm}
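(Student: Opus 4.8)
The plan is to reduce the assertion to the single embedding
\[
H^p_{z,w}(\Om)\hookrightarrow H^p_{\Delta_N,w}(\Om),\qquad 0<p\le 1,
\]
since Theorem \ref{thm1-Neumann}(ii) already provides the boundedness of $\nabla^2\Delta_N^{-1}$ from $H^p_{\Delta_N,w}(\Om)$ into $H^p_{Mi,w}(\Om)$ for every $0<p\le 1$ and every weight in the stated class. This is exactly the scheme used for Theorem \ref{thm2-Neumann}; the one point to upgrade is that there the embedding was obtained only for $\f{n}{n+\gamma}<p\le 1$, using the single H\"older bound \eqref{Holder-Neumann}, whereas here the derivative estimates \eqref{n derivative Holder-Neumann} of \emph{every} order will let me reach the full range $0<p\le 1$.

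To prove the embedding I would invoke Theorem \ref{maximal function result}, which identifies $H^p_{\Delta_N,w}(\Om)$ with the radial maximal Hardy space. Hence it suffices to produce a constant $C$, independent of the atom, with $\|\mathcal M_{\Delta_N}a\|_{L^p_w(\Om)}\le C$ for every $(p,q,w)_\Om$-atom $a$ (and, when $\Om$ is bounded, for the exceptional constant atom), where $\mathcal M_{\Delta_N}a(x)=\sup_{0<t<d_\Om^2}|e^{-t\Delta_N}a(x)|$. The decisive simplification over the Dirichlet picture is that the atoms defining $H^p_{z,w}(\Om)$ all carry full cancellation, $\int_B y^\alpha a(y)\,dy=0$ for $|\alpha|\le N:=\lfloor n(q_w/p-1)\rfloor$; there are no boundary-adjacent atoms without moments, so no vanishing property of the Neumann semigroup at $\partial\Om$ is required.

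I would then estimate $\mathcal M_{\Delta_N}a$ by splitting $\Om=4B\cup(\Om\setminus 4B)$. On $4B$ one dominates $\mathcal M_{\Delta_N}a\lesi \mathcal M a$ using the Gaussian bound \eqref{GU-Neumann}, and the $L^q_w$-boundedness of the Hardy--Littlewood maximal operator together with H\"older's inequality and the normalization $\|a\|_{L^q_w(\Om)}\le w(B)^{1/q-1/p}$ yields a bounded contribution. On $\Om\setminus 4B$ the moments enter: writing $e^{-t\Delta_N}a(x)=\int_B\big(p_{t,\Delta_N}(x,y)-T_N(x,y)\big)a(y)\,dy$, where $T_N$ is the degree-$N$ Taylor polynomial of $y\mapsto p_{t,\Delta_N}(x,y)$ about $x_B$, the polynomial part integrates to zero and the remainder is controlled by \eqref{n derivative Holder-Neumann} applied to the $(N+1)$-st derivatives, the $\partial_x$-bounds being transferred to $\partial_y$-bounds by self-adjointness $p_t(x,y)=p_t(y,x)$. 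Taking the supremum in $t$ this gives the pointwise tail decay $\mathcal M_{\Delta_N}a(x)\lesi \|a\|_{L^1}\,r_B^{N+1}|x-x_B|^{-n-N-1}$ on $\Om\setminus 4B$, with $\|a\|_{L^1}$ controlled by the atom normalization.

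The main obstacle — and the place where \eqref{n derivative Holder-Neumann} of all orders is indispensable — is to show that this tail lies in $L^p_w(\Om\setminus 4B)$ with a bound independent of $a$, for every $0<p\le 1$. Since the exponent $N+1=\lfloor n(q_w/p-1)\rfloor+1$ grows like $1/p$ as $p\to 0$, one sums the contributions of the annuli $S_j(B)$ and checks, using the doubling of $w$, its membership in $A_r$, and the reverse H\"older condition $w\in RH_{(p_0/r)'}$, that the resulting geometric series in $j$ converges. This is precisely the computation already performed in Theorem \ref{thm2-Neumann}, now carried out with the larger moment order $N$ afforded by the full-order smoothness \eqref{n derivative Holder-Neumann}, which is exactly what removes the lower restriction on $p$. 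Combining the two regions gives the uniform atomic bound, hence the embedding $H^p_{z,w}(\Om)\hookrightarrow H^p_{\Delta_N,w}(\Om)$, and the theorem follows from Theorem \ref{thm1-Neumann}(ii).
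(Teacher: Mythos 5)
Your proposal is correct and follows essentially the same route as the paper: reduce the theorem to the embedding $H^p_{z,w}(\Om)\hookrightarrow H^p_{\Delta_N,w}(\Om)$ via Theorem \ref{thm1-Neumann}, and establish that embedding through Theorem \ref{maximal function result} by proving the uniform bound $\|\mathcal M_{\Delta_N}a\|_{L^p_w(\Om)}\le C$ for $(p,q,w)_\Om$-atoms. The paper declares this last atomic estimate ``standard'' and omits it; your near/far splitting with the Taylor-remainder argument driven by the full moment conditions and the all-order derivative bounds \eqref{n derivative Holder-Neumann} is precisely the intended computation.
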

\begin{proof}
	From Theorem \ref{thm1-Neumann} it suffices to verify that
	\[
	H^p_{z,w}(\Om)\hookrightarrow H^p_{\Delta_N,w}(\Om) .
	\]
	To do this, we need to show that for all $0<p\le 1$ and $w\in \bigcup_{1<r<p_0}A_r(\Rn)\cap RH_{(p_0/r)'}(\Rn)$ there exists $C>0$ so that 
	\begin{equation}
	\label{eq-thm3 Neumann}
	\|\mathcal M_{\Delta_N}a\|_{L^p_w(\Om)}\le C
	\end{equation}
	for every $(p,q,w)_\Om$-atom $a$.
	
	The proof of \eqref{eq-thm3 Neumann} is standard and we omit the details. This completes our proof.
	
\end{proof}

It is worth noticing that in the unweighted case when $w\equiv 1$,  condition \eqref{L2 gradient estimate-Neumann} can be removed. We have the following result.  
\begin{thm}\label{thm1-Neumann-unweighted}
	Suppose the second derivative of the Green function $\nabla^2 \Delta_N^{-1}$ is bounded on $L^{p_0}(X)$ for some $p_0\ge 2$. Then we have:
	\begin{enumerate}[{\rm (i)}]
		\item The operator $\nabla^2 \Delta_N^{-1}$ is bounded from $H^p_{\Delta_N}(\Om)$ into $H^p_{Mi}(\Om)$ for all $0<p\le 1$.
		\item If \eqref{Holder-Neumann} is satisfied,  $\nabla^2 \Delta_N^{-1}$ is bounded from  $H^p_{z}(\Om)$ into $H^p_{Mi}(\Om)$ for all $\f{n}{n+\gamma}<p\le 1$.
		\item If \eqref{n derivative Holder-Neumann} is satisfied, then  $\nabla^2 \Delta_N^{-1}$ is bounded from  $H^p_{z}(\Om)$ into $H^p_{Mi}(\Om)$ for all $0<p\le 1$.
	\end{enumerate}
\end{thm}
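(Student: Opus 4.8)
The plan is to derive Theorem \ref{thm1-Neumann-unweighted} as the specialization $w\equiv 1$ of Theorems \ref{thm1-Neumann}, \ref{thm2-Neumann} and \ref{thm3-Neumann}, the essential observation being that condition \eqref{L2 gradient estimate-Neumann} enters those proofs \emph{only} through the weighted $L^p$ conclusion of Theorem \ref{thm1-Neumann}(i). Indeed, in the proof of Theorem \ref{thm1-Dirichlet}(i) (whose Neumann analogue is Theorem \ref{thm1-Neumann}(i)) the estimate \eqref{L2 gradient estimate-Neumann} is used exclusively to interpolate against the $L^{p_0}$ bound and thereby produce the annular decay feeding the singular-integral criterion Theorem \ref{Maintheorem-singularIntegral}. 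Since here $\nabla^2\Delta_N^{-1}$ is assumed bounded on $L^{p_0}(X)$ and $w\equiv 1$, this interpolation is no longer needed, so \eqref{L2 gradient estimate-Neumann} can be dropped.

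For part (i) I would rerun the atomic argument from the proof of Theorem \ref{thm1-Dirichlet}(ii), now with $w\equiv 1$ and $q=p_0$. Given an $(\Delta_N,p,p_0,M)$-atom $a=\Delta_N^Mb$ supported in a ball $B$, with $M$ as in Theorem \ref{maximal function result} (so $M>\f{n}{2}(\f{1}{p}-1)$ since $q_w=1$), the claim is that $\nabla^2\Delta_N^{-1}a$ is a $(p,p_0)_{Mi}$-atom, treated in the three cases $4B\subset\Om$, $4B\cap\Om^c\ne\emptyset$, and $\Om$ bounded. In the first case the support property and the cancellation condition follow from $\nabla^2\Delta_N^{-1}a=\nabla^2\Delta_N^{M-1}b$ and the iterated integration by parts, which do not invoke \eqref{L2 gradient estimate-Neumann}; the size estimate $\|\nabla^2\Delta_N^{-1}a\|_{L^{p_0}(\Om)}\lesi\|a\|_{L^{p_0}(\Om)}\lesi|B|^{1/p_0-1/p}$ now uses the hypothesized $L^{p_0}$ boundedness in place of the weighted bound. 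The remaining two cases are handled, exactly as in the weighted proof, by Miyachi's maximal characterization (Theorem \ref{MiyachiTheorem}) together with the $L^{p_0}$ boundedness of the Hardy--Littlewood maximal operator.

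For parts (ii) and (iii) I would, via Theorem \ref{maximal function result}, reduce to the embedding $H^p_z(\Om)\hookrightarrow H^p_{\Delta_N}(\Om)$, hence to the uniform bound $\|\mathcal M_{\Delta_N}a\|_{L^p(\Om)}\lesi 1$ for $(p,\vc)_{Mi}$-atoms (case (ii)) and $(p,q)_\Om$-atoms (case (iii)), where $\mathcal M_{\Delta_N}a(x)=\sup_{0<t<d_\Om^2}|e^{-t\Delta_N}a(x)|$. This is precisely the computation already carried out in the proofs of Theorems \ref{thm2-Neumann} and \ref{thm3-Neumann} specialized to $w\equiv 1$: one splits $\Om$ into $4B$ and $\Om\setminus 4B$, uses $\mathcal M_{\Delta_N}a\lesi\mathcal Ma$ and the $L^\vc$ size of $a$ on $4B$, and on $\Om\setminus 4B$ invokes the Gaussian bound \eqref{GU-Neumann} with the H\"older continuity \eqref{Holder-Neumann} (yielding the range $\f{n}{n+\gamma}<p\le 1$ in (ii)) or with the derivative bound \eqref{n derivative Holder-Neumann} (yielding the full range $0<p\le 1$ in (iii)). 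Combining the resulting embedding with part (i) gives the two mapping properties.

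The substance of the theorem is organizational rather than analytic: all genuine estimates are already present in the weighted statements, so the only real task --- and the step I would check most carefully --- is the bookkeeping verifying that \eqref{L2 gradient estimate-Neumann} never appears in any \emph{unweighted} estimate, i.e.\ that every appeal to the weighted $L^p$ conclusion of Theorem \ref{thm1-Neumann}(i) can be replaced by the assumed $L^{p_0}$ boundedness. Once this is confirmed, all three parts follow at once.
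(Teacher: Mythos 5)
Your proposal is correct and matches the paper's own (very brief) justification: the paper states Theorem \ref{thm1-Neumann-unweighted} precisely as the observation that condition \eqref{L2 gradient estimate-Neumann} enters Theorems \ref{thm1-Neumann}--\ref{thm3-Neumann} only through the weighted $L^p_w$ conclusion, so that for $w\equiv 1$ one may take $q=p_0$ in the atomic arguments and replace every appeal to that conclusion by the assumed $L^{p_0}$ boundedness (together with the standard $L^{p_0}$ bound for the Hardy--Littlewood maximal operator). Your expansion of this bookkeeping, including the reduction of (ii) and (iii) to the embedding $H^p_z(\Om)\hookrightarrow H^p_{\Delta_N}(\Om)$ via the uniform maximal-function bound on atoms, is exactly the intended argument.
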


\subsection{Proof of main results} We now ready to give the proofs of Theorems \ref{mainthm1-bounded domain}--\ref{mainthm3-upper half-space}.

\begin{proof}
	[Proof of Theorem \ref{mainthm1-bounded domain}:] Note that the $L^2$-boundedness of $\nabla^2 \Delta_D^{-1}$ and $\nabla^2 \Delta_N^{-1}$ can be found in \cite[Theorem 4.8]{DHMMY}.  The condition \eqref{Holder-Dirichlet} for any $\gamma\in (0,1)$ and  condition \ref{L2 gradient estimate} for the inhomogeneous Dirichlet problems were verified in Lemma 2.7 and Proposition 4.15 in \cite{DHMMY}. Meanwhile, the Gaussian upper bound \eqref{GU-Neumann}, the H\"older continuity condition \ref{Holder-Neumann} for $\gamma=1$ and the estimate \eqref{L2 gradient estimate-Neumann} can be found in  Lemma 2.8 and Proposition 4.15 in \cite{DHMMY}. Therefore, Theorem \ref{mainthm1-bounded domain} follows directly from Theorem \ref{thm1-Dirichlet}, Theorem \ref{thm2-Dirichlet}, Theorem \ref{thm1-Neumann} and Theorem \ref{thm2-Neumann}.
\end{proof}

\bigskip

To prove Theorem \ref{mainthm2-above convex Lipschitz domains}, we need the following technical result:
\begin{lem}\label{lemAJ}
	(a) Let $\Omega$  be a convex domain above a Lipschitz graphs in $\Rn$, and
	assume that $f \in L^2(\Omega)$. Then the unique solution $u \in W^{1,2}(\Omega)$ to the Neumann problem \eqref{NeumannProblem} has the property that for any  $ \psi \in C^\vc(\Rn)$,
	\begin{eqnarray}
	\int_\Omega \psi^2|\nabla^2 u|^2dx \leq C\int_\Omega |\nabla \psi|^2|\nabla u|^2 dx + C\int_\Omega \psi^2 f^2 dx,
	\end{eqnarray}
	for some finite constant $C>0$ independent of $f$.
	
	(b) The result of part (a) still holds if the function $f \in L^2(\Omega)$ with $\int_\Omega
	fdx=0$ and the Neumann problem \eqref{NeumannProblem} is replaced by the Dirichlet problem \eqref{NeumannProblem}.
\end{lem}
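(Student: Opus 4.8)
The plan is to reduce to the case of a smooth convex domain and there establish a weighted Reilly-type identity, in the spirit of the second-derivative estimates of Adolfsson--Jerison \cite{AJ}. Since $\Om$ is the region lying above the graph of a convex Lipschitz function, I would mollify that defining function to obtain smooth convex functions, and hence a family of smooth convex domains $\Om_\epsilon$ approximating $\Om$; on each $\Om_\epsilon$ the relevant solution $u_\epsilon$ is locally $W^{2,2}$, so the formal computations below are justified, and the constant $C$ produced at the end depends only on Young's inequality, not on $\epsilon$ or the geometry. Passing to the limit $\epsilon\to 0$ then yields the claim on $\Om$. Throughout one may assume $\psi$ is a genuine cut-off, so that all integrals converge, the general case following by truncation; and I write $f=\Delta u$.

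The heart of the matter is the identity obtained by integrating by parts twice. Starting from $\int_\Om \psi^2(\partial_{ij}u)^2$, one integration by parts in $x_i$ produces a bulk term $-\int_\Om\psi^2\partial_{iij}u\,\partial_j u$, an error term carrying one derivative of $\psi$, and a boundary term; a second integration by parts, using $\sum_i\partial_{iij}u=\partial_j\Delta u$, converts the bulk term into $\int_\Om\psi^2(\Delta u)^2$ plus a further $\nabla\psi$ error and another boundary contribution. Summing over $i,j$ one arrives at
\[
\int_\Om \psi^2|\nabla^2u|^2\,dx=\int_\Om\psi^2(\Delta u)^2\,dx+R_\psi+B,
\]
where $R_\psi=2\int_\Om\psi(\nabla\psi\cdot\nabla u)\Delta u\,dx-2\sum_{i,j}\int_\Om\psi\,\partial_i\psi\,\partial_{ij}u\,\partial_j u\,dx$ collects the terms with a derivative of $\psi$, and
\[
B=\int_{\partial\Om}\psi^2\Big(\sum_{i,j}\nu_i\,\partial_{ij}u\,\partial_j u-(\Delta u)\,\partial_\nu u\Big)\,d\sigma
\]
is the boundary term, $\nu$ being the outward unit normal.

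The decisive step is the sign of $B$. For the Neumann problem $\partial_\nu u=0$ on $\partial\Om$, so the second summand of $B$ vanishes and $\nabla u$ is tangential; a standard boundary computation then rewrites the first summand as $-\int_{\partial\Om}\psi^2\,\mathrm{II}(\nabla u,\nabla u)\,d\sigma$, where $\mathrm{II}$ is the second fundamental form of $\partial\Om$. For the Dirichlet problem $u=0$ on $\partial\Om$, the tangential gradient vanishes, $\nabla u=(\partial_\nu u)\nu$, and $B$ reduces to a term of the form $-\int_{\partial\Om}\psi^2 H(\partial_\nu u)^2\,d\sigma$ with $H$ the mean curvature. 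In both cases the convexity of $\Om$ forces $\mathrm{II}\ge 0$ and $H\ge 0$, whence $B\le 0$ and the boundary term may simply be discarded, giving $\int_\Om \psi^2|\nabla^2u|^2\,dx\le\int_\Om\psi^2(\Delta u)^2\,dx+R_\psi$. It then remains to absorb $R_\psi$: by Cauchy--Schwarz and Young's inequality,
\[
|R_\psi|\le \int_\Om\psi^2(\Delta u)^2\,dx+\tfrac12\int_\Om\psi^2|\nabla^2u|^2\,dx+C\int_\Om|\nabla\psi|^2|\nabla u|^2\,dx,
\]
so moving $\tfrac12\int_\Om\psi^2|\nabla^2u|^2$ to the left-hand side and recalling $\Delta u=f$ gives exactly the asserted bound.

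The main obstacle is not the algebra above but its rigorous justification on the genuinely non-smooth, unbounded domain $\Om$. One must check that the smooth convex approximants can be chosen so that the solutions converge in $W^{1,2}_{loc}$ (with Hessians converging weakly in $L^2_{loc}$) while the constant stays uniform, and that the convexity-driven sign $B\le 0$ persists in the limit, i.e.\ that the second fundamental form of $\partial\Om$, which for a convex Lipschitz boundary exists only as a nonnegative measure, contributes with the correct sign. The localization afforded by the cut-off $\psi$ is precisely what renders this limiting argument tractable in the unbounded setting, and it is this passage to the limit that I expect to require the most care.
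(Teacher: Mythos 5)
The paper does not actually prove this lemma: part (a) is quoted verbatim from Theorem 2.1 of \cite{AJ} (Adolfsson--Jerison), and part (b) is dismissed as ``similar'' and left to the reader. What you have written is, in effect, a reconstruction of the proof of that cited theorem: the weighted Reilly--Kadlec identity obtained by two integrations by parts, the observation that the boundary term reduces to $-\int_{\partial\Om}\psi^2\,\mathrm{II}(\nabla_T u,\nabla_T u)\,d\sigma$ in the Neumann case and to a mean-curvature term against $(\partial_\nu u)^2$ in the Dirichlet case (the cross term $2\,\partial_\nu u\,\Delta_T u$ vanishing under either boundary condition), the sign $B\le 0$ forced by convexity, and the absorption of $R_\psi$ by Young's inequality with an absolute constant. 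This is the correct mechanism and the algebra is right. The honest caveat is that the part you explicitly defer --- choosing smooth convex approximants $\Om_\epsilon$ so that $u_\epsilon\to u$ in $W^{1,2}_{loc}$ with Hessians converging weakly, and verifying that the nonnegativity of the second fundamental form (which on a convex Lipschitz boundary exists only as a measure) survives the limit --- is precisely the technical content that the paper outsources to \cite{AJ}; so your proposal identifies the right proof but stops where the cited reference begins its real work. Two minor points: the statement allows $\psi\in C^\vc(\Rn)$ without compact support, so your reduction to cut-offs should be justified by monotone convergence on an exhaustion; and for part (b) note that the hypothesis $\int_\Om f\,dx=0$ in the statement is vestigial for the Dirichlet problem (it is a compatibility condition for Neumann, not Dirichlet), so you need not use it.
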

\begin{proof}
	The statement (a) is essential taken from  Theorem 2.1 in \cite{AJ}. The proof of (b) can be done similarly and hence we leave it to interested readers.
\end{proof}
\begin{proof}
	[Proof of Theorem \ref{mainthm2-above convex Lipschitz domains}:] The  $L^2$-boundedness of $\nabla^2 \Delta_D^{-1}$ and $\nabla^2 \Delta_N^{-1}$ was proved in \cite{A2} and \cite{AJ}, respectively. The Gaussian upper bound \eqref{GU-Neumann} follows from Theorem 3.2.9 in \cite{Da}. The H\"older continuity conditions \eqref{Holder-Dirichlet} and \eqref{Holder-Neumann} for any $\gamma\in (0,1)$ can be found in \cite{AR}. We now verify conditions \eqref{L2 gradient estimate} and \eqref{L2 gradient estimate-Neumann}. Let us take care of \eqref{L2 gradient estimate} first. By using (b) in Lemma \ref{lemAJ} and arguing similarly to the proof of Proposition 4.16 in \cite{DHMMY} we conclude that there exist a constant $\beta>0$ and $C>0$ so that
	\begin{equation}\label{L^2 strong estimates}
	\Big(\int_{x\in \Om} |\nabla_x^2 p_{t,\Delta_D}(x,y)|^2e^{\beta \f{|x-y|^2}{t}}dx\Big)^{1/2} \le C_\beta t^{-2}|B_\Om(y,\sqrt{t})|^{-1}, \ \ \ \forall y\in \Om, t>0
	\end{equation}
    which proves \eqref{L2 gradient estimate}.
    
    Similarly, we obtain that \eqref{L2 gradient estimate-Neumann} is satisfied.
    
    Hence, Theorem \ref{mainthm1-bounded domain} follows directly from Theorem \ref{thm1-Dirichlet}, Theorem \ref{thm2-Dirichlet}, Theorem \ref{thm1-Neumann} and Theorem \ref{thm2-Neumann}.
\end{proof}

\begin{proof}
	[Proof of Theorem \ref{mainthm3-upper half-space}:] The boundedness of $\nabla^2 \Delta_D^{-1}$ and $\nabla^2 \Delta_N^{-1}$ on $L^p(\Om)$ for $1<p<\vc$ is classical. See for example \cite{CKS2}. The Gaussian upper bound \eqref{GU-Neumann}, the H\"older continuity conditions \eqref{Holder-Dirichlet} and \eqref{Holder-Neumann}, and  conditions \eqref{L2 gradient estimate} and \eqref{L2 gradient estimate-Neumann} follow directly from the explicit expression for the kernels of $p_{t,\Delta_D}(x,y)$ and $p_{t,\Delta_N}(x,y)$.
	
	Therefore, Theorem \ref{mainthm3-upper half-space} follows immediately from Theorem \ref{thm3-Dirichlet} and Theorem \ref{thm3-Neumann}.
\end{proof}
{\bf Acknowledgement.} X. T. Duong was supported by the Australian Research Council through the research grant ARC DP160100153.

\end{document}